\numberwithin{equation}{section}
\theoremstyle{plain}
\newtheorem{theorem}{Theorem}[section]
\newtheorem{proposition}[theorem]{Proposition}
\newtheorem{lemma}[theorem]{Lemma}
  \theoremstyle{remark}
  \theoremstyle{definition}
\newtheorem{definition}[theorem]{Definition}
\newtheorem{example}[theorem]{Example}
\def\R{\mathbb{R}}
\DeclareMathOperator{\Ker}{Ker}
\DeclareMathOperator{\spann}{span}
\DeclareMathOperator{\dist}{dist}
\def\eps{\varepsilon}
\def\eps{\varepsilon}
\def\deltab{\boldsymbol{\delta}}
\def\xib{\boldsymbol{\xi}}
\def\phib{\boldsymbol{\phi}}
\def\psib{\boldsymbol{\psi}}
\def\lambdab{\boldsymbol{\lambda}}
\def\ob{\textrm{o}}
\def\Ob{\textrm{O}}
\newcommand{\inner}[2]{\langle  #1,{ #2}\rangle_{H^1_0}}
\begin{document}

\title[]{Spiked Solutions for Schr\"odinger Systems with Sobolev Critical Exponent: the cases of competitive and weakly cooperative interactions}

\author[A. Pistoia]{Angela Pistoia}
\address{Angela Pistoia \newline \indent Universit\`a di Roma ``La Sapienza'' \newline \indent
Dipartimento di Metodi e Modelli Matematici,  via Antonio Scarpa 16, \newline 
\indent 00161 Roma, Italy}
\email{angela.pistoia@uniroma1.it}

\author[H. Tavares]{Hugo Tavares}
\address{Hugo Tavares \newline \indent CAMGSD, Instituto Superior T\'ecnico
\newline \indent Pavilh\~ao de Matem\'atica, Av. Rovisco Pais \newline \indent
1049-001 Lisboa, Portugal}
\email{htavares@math.ist.utl.pt}

\date{\today}
\subjclass[2010]{35A15; 35J20; 35J47}

\keywords{Blowup and concentrating solutions. Brezis-Nirenberg type problems. Competitive and weakly cooperative systems. Critical Sobolev Exponent. Cubic Schr\"odinger systems. Lyapunov-Schmidt reduction.}

\maketitle

\begin{abstract} 
In this paper we deal with the nonlinear Schr\"odinger system
\[
-\Delta u_i =\mu_i u_i^3 + \beta u_i \sum_{j\neq i} u_j^2 + \lambda_i u_i, \qquad u_1,\ldots, u_m\in H^1_0(\Omega)
\]
in dimension 4, a problem with critical Sobolev exponent. In the competitive case ($\beta<0$ fixed or $\beta\to -\infty$) or in the weakly cooperative case ($\beta\geq 0$ small), we construct, under suitable assumptions on the Robin function associated to the domain $\Omega$, families of positive solutions which blowup and concentrate at different points as $\lambda_1,\ldots, \lambda_m\to 0$. This problem can be seen as a generalization for systems of a Brezis-Nirenberg type problem.
\end{abstract}
\section{Introduction}
Consider the cubic Schr\"odinger system with $m$ equations:
\begin{equation}\label{eq:system_2eq}
\begin{cases}
\displaystyle -\Delta u_i =\mu_i u_i^3 + \beta u_i \mathop{\sum_{j=1}^m}_{j\neq i} u_j^2 + \lambda_i u_i,\quad i=1,\ldots, m\\
u_1,\ldots, u_m \in H^1_0(\Omega)
\end{cases}
\end{equation}
where $\Omega$ is a bounded domain in $\R^4$. Such system appears when looking for standing wave solutions $\phi_i=e^{-\imath \lambda_i t}u_i(x)$ of the corresponding system of Gross-Pitaevskii equations
\[
\imath \partial_t \phi_i -\Delta \phi_i =\mu_i |\phi_i|^2 \phi_i + \beta \phi_i \sum_{j\neq i} |\phi_j|^2.
\] 
The sign and size of the parameter $\beta$ determines the type and strengh of the interaction between different components of the vector solution. If $\beta>0$, the interaction is of \emph{cooperative} nature, and the system is used to describe phenomena in Nonlinear Optics, for instance describing the propagation of self trapped mutually incoherent wave packets \cite{Fisica1}; in this situation, $\phi_i$ describes the $i$--th components of a beam. On the other hand, if $\beta<0$, then the interaction is \emph{competitive}, and the system has been used in the theory of Bose-Einstein Condensation  to model the presence of several distinguishable consensates \cite{Fisica2}; here $\phi_i$ is the wave function of the $i$-th condensate.

\medbreak

From a mathematical point of view, \eqref{eq:system_2eq} is a good prototype of a weakly coupled gradient system. We work in dimension 4, so that the exponent $3$ is critical. Hence, the problem can be viewed as a generalization for systems of the well known \emph{Brezis-Nirenberg problem}. In this paper we construct, under some geometric assumptions on the domain $\Omega$, and for either $-\infty<\beta\leq \bar \beta$ ($\bar \beta>0$ small) or as $\beta\to -\infty$, solutions which concentrate and blowup at different points as $\lambda_1,\ldots, \lambda_m\to 0$. %

\medbreak

The classical Brezis-Nirenberg problem:
\begin{equation}\label{eq:BN}
-\Delta u=u^{(N+2)/(N-2)}+ \lambda u \text{ in } \Omega\subset \R^N,\qquad u=0 \text{ on } \partial \Omega
\end{equation}
has a long history. Brezis and Nirenberg, in their seminal paper \cite{BrezisNirenberg}, have proved among other things that in dimension $N\geq 4$, problem \eqref{eq:BN} admits a positive solution if and only if $0<\lambda<\lambda_1(\Omega)$, where $\lambda_1(\Omega)$ denotes the first eigenvalue of $(-\Delta, H^1_0(\Omega))$. Han, in \cite{HanAIHP}, proved that, if $(u_\lambda)_\lambda$ is a family of positive solution of \eqref{eq:BN}, being also a minimizing sequence for the best Sobolev constant of $H^1(\R^N)\hookrightarrow L^{2N/(N-2)}(\R^N)$, then as $\lambda\to 0$ the functions concentrate at a critical point of the Robin function of $\Omega$. Conversely, if $N\geq 5$, Rey in \cite{ReyNA,Rey} proved that, given $x_0$ a non-degenerate critical point of the Robin function, then there exists a family of solutions concentrating at $x_0$, as $\lambda\to 0$. This last result was later improved by Musso and Pistoia \cite{MussoPistoiaIndiana2002}, where solutions concentrating in $k\geq 1$ different points of the domain were found. 

The study of existence and concentration of sign-changing solutions of \eqref{eq:BN} is a much more delicate problem, and we refer for instance to the papers \cite{Pacella,BartschMichelettiPistoia,CastroClapp,Giusi, MichelettiPistoiaNonlinearity2004}. The reference \cite{PistoiaSurvey} is a survey containing a rather complete description of the litterature, not only for the Brezis-Nirenberg case, but also for the \emph{almost critical problem} and the \emph{Bahri-Coron problem} for the single equation.

Let us stress that the vast majority of the papers we cited deal with the case $N\geq 5$: the case $N=4$ deserves in general an extra care from a technical point of view, mainly due to the fact that the liming profile (i.e., the solutions of $-\Delta U=U^3$ in the whole $\R^4$) do not belong to the space $L^2(\R^4)$. Even if many results can be extended for $N=4$, few papers rigorously state and prove this: we refer for instance to \cite{Pacella,Giusi}.

\medbreak

Consider now the system
\begin{equation}\label{eq:systemp}
-\Delta u_i =\mu_i |u_i|^{2p-2}u_i + \beta u_i |u_i|^{p-2} \sum_{j\neq i} |u_j|^{p} + \lambda_i u_i,\qquad i=1,\ldots, m,
\end{equation}
with $2p\leq 2N/(N-2)$, of which \eqref{eq:system_2eq} is a particular case for $p=2$. The existence of positive solutions in the subcritical case $2p < 2N/(N-2)$ has been the object of intensive research in the last ten years, starting from \cite{ AmbrosettiColorado,LinWeiErratum, LinWei,MaiaMontefuscoPellacci}. By now, a good description of positive solutions and of least energy solutions is available, and we refer for instance to the introductions of the recent papers \cite{Soave, SoaveTavares} for more details. One of the interesting features of these systems  in the fact that they admit solutions with trivial components; for this reason, the systems are sometimes called weakly coupled. It should be noted that both the sublinear/superlinear character of the exponent $p$  \cite{Mandel, OliveiraTavares} or the number of the equations \cite{CorreiaOliveiraTavares} influence the existence results of nontrivial least energy solutions.

As for the critical case $p=N/(N-2)$, its study is recent, starting from the paper by Chen and Zou \cite{ChenZouARMA2012}, for a system with $m=2$ equations and exponent $p=2$. There, it is proved the existence of $0<\beta_1<\beta_2$ such that the system admits a least energy solution (i.e., a solution with minimal energy among all solutions with nontrivial components) for each $\beta\in (-\infty,0)\cup(0,\beta_0)\cup (\beta_1,+\infty)$. In the case $\lambda_1=\lambda_2$, there are no positive solutions for some ranges of $\beta$. The authors treated the higher dimension case $N\geq 5$ in \cite{ChenZou2}, obtaining the existence of least energy solutions for any $\beta\neq 0$. This shows that, in the critical case, the dimension plays a very importante role. Solutions with one sign-changing component are obtained, in the competitive case, in \cite{ChenLinZouCPDE2014}. Observe also that if $N\geq 5$ then $p=N/(N-2)<2$. This, in general, may bring several complications from a technical point of view to deal with \eqref{eq:systemp} in the critical case: for instance,  the term $u_i |u_i|^{p-2}$ is not of class $C^1$. 

As for concentration and blowup results for the Brezis-Nirenberg problem in systems, we are only aware of the paper by Chen and Lin \cite{ChenLin}, where the authors prove for the $m=2$ equation system the analogue of Han's result \cite{HanAIHP} for the single equation: more precisely, they prove that, for $\beta\in (0,\beta_0)\cup (\beta_1,\infty)$ if $N=4$, and $\beta>0$ if $N\geq 5$, the least energy solutions $(u_{\lambda_1},u_{\lambda_2})$ found by Chen and Zou both concentrate and blowup at the same point $x_0$, which is a critical point of the Robin function, as $\lambda_1,\lambda_2\to 0$.

\medbreak

Here we deal with system \eqref{eq:system_2eq}, in the competitive or weakly cooperative cases. Before we state our main results, we need to introduce some notations. Given $\delta>0$ and $\xi\in \Omega$, let
\[
U_{\delta,\xi}(x)=c_4\frac{\delta}{\delta^2+|x-\xi|^2},\qquad x\in \R^4,
\]
with $c_4=2\sqrt{2}$. These functions correspond to all positive solutions of the critical problem in the whole space:
\begin{equation}\label{eq:equation_wholespace}
-\Delta U=U^3 \qquad \text{ in } \R^4.
\end{equation}
Observe that a straightforward computation shows that $U$ is a solution to \eqref{eq:equation_wholespace} if and only if $\mu^{-1/2}U_{\delta,\xi}$ solves 
\[
-\Delta U=\mu U^3 \qquad \text{ in } \R^4.
\]
Observe that $U_{\delta,\xi}\in \mathcal{D}^{1,2}(\R^4)$ (but not in $H^1(\R^4)$), and we define $P$ as the projection map of $\mathcal{D}^{1,2}(\R^4)$ onto $H^1_0(\Omega)$. Throughout this paper we will deal with $PU_{\delta,\xi}$, which is the unique solution of the problem
\begin{equation}\label{eq:projection_of_U}
-\Delta W=-\Delta U_{\delta,\xi}=U_{\delta,\xi}^3\quad  \text{ in } \Omega,\qquad W=0  \text{ on } \partial \Omega.
\end{equation}

We also denote by $G(x,y)$ the Green function of $(-\Delta,H^1_0(\Omega))$, which satisfies, given $x\in \Omega$,
\[
\begin{cases}
-\Delta_y G(x,y)=\delta_x & \text{ for } y\in  \Omega\\
G(x,y)=0 & \text{ for } y\in  \partial \Omega.
\end{cases}
\]
The Green function can be decomposed as
\[
G(x,y)=\frac{\alpha_4}{|x-y|^2}-H(x,y),\qquad \forall (x,y)\in \Omega^2,\ x\neq y,
\]
where $\alpha_4=(2|\omega_3|)^{-1}$ ($\omega_3$ denotes the measure of the unit sphere $S^3\subset \R^4$) and $H$, the regular part of $G$, satisfies
\[
\begin{cases}
-\Delta_y H(x,y)=0 & \text{ for } y\in  \Omega\\
H(x,y)=\alpha_4/|x-y|^2 & \text{ for } y\in  \partial\Omega.
\end{cases}
\]
The Robin function $\tau:\Omega\to \R$ is defined by $\tau(x):=H(x,x)$. It is well know that $\tau$ is a $C^2(\Omega)$ positive function and that $\tau(x)\to +\infty$ as $\dist(x,\partial \Omega)\to 0$, hence $\tau$ has always a (positive) minimum in $\Omega$.

Finally, we will denote the standard inner product and norm in $H^1_0(\Omega)$ by:
\[
\langle u,v \rangle_{H^1_0}=\int_\Omega \nabla u\cdot \nabla v,\qquad \|u\|_{H^1_0}:=\left(\int_\Omega |\nabla u|^2\right)^{1/2},
\]
and the $L^p(\Omega)$ norms by $\|\cdot \|_p$, for $p\geq 1$.

\medbreak

Let $\lambdab:=(\lambda_1,\dots,\lambda_m).$ Our first main result deals with a situation where the Robin function admits $m$ local minimums.

\begin{theorem}\label{thm:main1}
Assume there exist $m$ mutually disjoint open sets $\Lambda_1,\ldots, \Lambda_m$ such that $\inf_{\Lambda_i} \tau<\inf_{\partial \Lambda_i} \tau$.  Then there exist $\bar \beta>0$ and points $\{\xi_1^0,\ldots, \xi_m^0\}$, with $\xi_i^0\in \Lambda_i$ and $\tau(\xi_i^0)=\min_{\Lambda_i} \tau$, such that, for every $-\infty<\beta\leq \bar \beta$ fixed, there exists a family of solutions $(u_{1}^{\lambdab},\ldots, u_m^{\lambdab})$ of \eqref{eq:system_2eq} such that $u_i^{\lambdab}$ concentrates at $\xi_{i}^0$ as $\eps:=\max\{\lambda_1,\ldots,\lambda_m\}\to 0$.

More precisely, there exists    $\delta_{i}^{\lambdab}:=e^{-\frac{d_{i}^{\lambdab}}{\lambda_i}}\to 0$ (with $d_{i}^{\lambdab}\to \frac{c_4}{\omega_3}\left(\int_{\R^4}U_{1,0}^3\right)^2 \tau(\xi_i^0)$) and points $\xi_{i}^{\lambdab}\to \xi_i^0$ such that
\[
u_i^{\lambdab}=\mu_i^{-1/2}PU_{\delta_{i}^{\lambdab},\xi_{i}^{\lambdab}}+\phi_{i}^{\lambdab},\qquad \text{ with }\qquad \|\phi_{i}^{\lambdab}\|_{H^1_0}\to 0 \quad \text{ as } \lambdab\to 0.
\]
\end{theorem}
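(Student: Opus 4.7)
The plan is to run a Lyapunov--Schmidt finite-dimensional reduction, adapted to the system and to the technical peculiarities of dimension $4$. Let
\[
J_{\lambdab}(\mathbf u)=\sum_i\tfrac{1}{2}\|u_i\|_{H^1_0}^2-\sum_i\tfrac{\mu_i}{4}\int u_i^4-\tfrac{\beta}{2}\sum_{i<j}\int u_i^2u_j^2-\sum_i\tfrac{\lambda_i}{2}\int u_i^2
\]
be the associated energy functional, and look for critical points of the form $u_i=\mu_i^{-1/2}PU_{\delta_i,\xi_i}+\phi_i$ with $\xi_i\in\Lambda_i$, $\delta_i>0$ to be determined, and each remainder $\phi_i\in H^1_0(\Omega)$ $H^1_0$-orthogonal to the five-dimensional approximate kernel
\[
K_{\delta_i,\xi_i}=\spann\bigl\{P\partial_{\delta}U_{\delta_i,\xi_i},\,P\partial_{(\xi_i)_k}U_{\delta_i,\xi_i}:k=1,\dots,4\bigr\}.
\]

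First I would develop the linear theory, proving that the operator obtained by linearizing the system at the sum of bubbles and projecting onto the orthogonal complement of $\bigoplus_i K_{\delta_i,\xi_i}$ is uniformly invertible as $\lambdab\to 0$. Because the sets $\Lambda_i$ are mutually disjoint, the bubbles are localized in disjoint regions, and the off-diagonal coupling (bounded above by $\bar\beta$ or negative) is subdominant, so the invertibility reduces essentially to the classical single-equation estimate. A contraction argument in an appropriate weighted norm then produces $\phib=\phib(\deltab,\xib,\lambdab)$ solving the infinite-dimensional projected problem with
\[
\sum_i\|\phi_i\|_{H^1_0}=\Ob\!\left(\max_i\lambda_i\delta_i|\log\delta_i|^{1/2}\right)+\text{(smaller interaction terms)}.
\]

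Plugging $\phib$ back gives the reduced functional $\widetilde J_{\lambdab}(\deltab,\xib)=J_{\lambdab}(\mathbf u(\deltab,\xib))$, whose critical points correspond to actual solutions of \eqref{eq:system_2eq}. Using the standard asymptotic expansions of $\|PU_{\delta,\xi}\|_{H^1_0}^2$, $\int PU_{\delta,\xi}^4$ and $\int PU_{\delta,\xi}^2$ in dimension $4$ (the last of which produces the key $\delta^2|\log\delta|$ term responsible for the dimension-$4$ pathology), the expansion reads
\[
\widetilde J_{\lambdab}(\deltab,\xib)=C_0+\sum_i a_i\delta_i^2\bigl[\tau(\xi_i)-b\,\lambda_i|\log\delta_i|\bigr]+\ob\!\left(\sum_i\lambda_i\delta_i^2|\log\delta_i|\right)
\]
for explicit constants $a_i,b>0$; the $\beta$-driven interaction is exponentially suppressed because $\dist(\Lambda_i,\Lambda_j)>0$. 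The natural scale is exponential: setting $\delta_i=e^{-d_i/\lambda_i}$ turns the bracket into $\tau(\xi_i)-bd_i$, so the problem reduces to locating a critical point of $(\db,\xib)\mapsto\sum_i e^{-2d_i/\lambda_i}\bigl(\tau(\xi_i)-bd_i\bigr)$ up to lower order. The hypothesis $\inf_{\Lambda_i}\tau<\inf_{\partial\Lambda_i}\tau$ supplies an interior local minimum $\xi_i^0\in\Lambda_i$; pairing it with the saddle in $d_i$ at $d_i^0=\frac{c_4}{\omega_3}\bigl(\int_{\R^4}U_{1,0}^3\bigr)^2\tau(\xi_i^0)$ yields a $C^0$-stable critical configuration.

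A local min--max or topological degree argument near $((d_i^0),(\xi_i^0))$, combined with the smallness of the interaction terms when $\beta\leq\bar\beta$, then delivers the required critical point of $\widetilde J_{\lambdab}$ for every $\lambdab$ close to $0$, and unwinding the ansatz produces the solutions of the theorem. The main obstacle I foresee is the precise tracking of the logarithmic terms inherent to dimension $4$: the $L^2$-divergence of $U_{\delta,\xi}$ forces the exponential parametrization $\delta_i=e^{-d_i/\lambda_i}$ and imposes unusually tight error estimates at every step of the reduction; moreover these estimates must remain uniform as $\beta\to-\infty$, so that the competitive coupling, although exponentially small thanks to spatial separation of the bubbles, does not destabilize either the linear theory or the expansion of $\widetilde J_{\lambdab}$.
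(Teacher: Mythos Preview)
Your overall scheme---Lyapunov--Schmidt reduction, exponential reparametrization $\delta_i=e^{-d_i/\lambda_i}$, and analysis of the reduced energy---matches the paper. However, there is a genuine conceptual error in the final step that would cause trouble.

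You describe the critical point in $d_i$ as a \emph{saddle} and propose a min--max or degree argument. In fact, for fixed $\xi_i$, the map
\[
d_i\longmapsto e^{-2d_i/\lambda_i}\bigl(a\,\tau(\xi_i)-b\,d_i\bigr)
\]
has a unique critical point which is a strict \emph{minimum} (compute the second derivative, or simply note that the function is positive at $d_i=0$, tends to $0^-$ as $d_i\to\infty$, and has exactly one critical point). Combined with the hypothesis $\inf_{\Lambda_i}\tau<\inf_{\partial\Lambda_i}\tau$, the reduced functional therefore has an interior \emph{minimum} on a box of the form $[d_i^0-\gamma,d_i^0+\gamma]\times\Lambda_i$, and the paper finds the critical point by direct minimization. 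This is not a cosmetic distinction: Theorem~\ref{thm:main1} assumes only that $\tau$ has local minima, with no nondegeneracy or $C^1$-stability. A degree argument in $\xi_i$ would require $\deg(\nabla\tau,\Lambda_i,0)\neq 0$, which is precisely the extra hypothesis of Theorem~\ref{thm:main2}; a min--max is simply the wrong variational structure. The minimization argument, by contrast, is robust under the weaker assumption and even allows the limiting point $\xi_i^0$ to be any minimizer of $\tau$ on $\Lambda_i$.

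Two smaller points. First, the paper obtains the sharper remainder bound $\|\phib\|_{H^1_0}\le c\sum_i(\lambda_i\delta_i+\delta_i^2)$ without the $|\log\delta_i|^{1/2}$ factor (the relevant term is $\|\lambda_i PU_i\|_{4/3}=\lambda_i\Ob(\delta_i)$, since $\int U_{\delta,\xi}^{4/3}=\Ob(\delta^{4/3})$). Second, you omit the positivity argument: the paper works with the modified nonlinearity $(u_i^+)^3$, and checks a posteriori (testing the equation against $u_i^-$) that the constructed solutions are positive, using $\beta\le\bar\beta$ in the weakly cooperative case. Finally, uniformity as $\beta\to-\infty$ is irrelevant here: Theorem~\ref{thm:main1} is for fixed $\beta$, and the $\beta\to-\infty$ regime is the content of Theorem~\ref{thm:main3}.
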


In the previous result, concentration holds at local minimums. In case of non strict minimizers,  we cannot prescribe the points of concentration.  In the next main result, asking a stronger condition on some critical points of $\tau$, we are able to prescribe the concentration points. We start by recalling the following definition (see \cite{YYLi}, \cite[Definition 2.4]{MussoPistoiaIndiana2002}).

\begin{definition}\label{def:stable}
We say that $x_0\in \Omega$ is a $C^1$--\emph{stable} critical point of $\tau$ if $\nabla \tau(x_0)=0$, and there exists $U$, a neighborhood of $x_0$, such that
\begin{enumerate}
\item[(i)] $\nabla \tau(x)\neq 0$ for every $x\in \partial U$;
\item[(ii)] $\nabla \tau(x)=0$ for some $x\in U$ if, and only if, $\tau(x)=\tau(x_0)$;
\item[(iii)] $\textrm{deg} (\nabla \tau,U,0)\neq 0$, where $deg$ denotes the Brower degree.
\end{enumerate}
\end{definition}

Observe that  scrict local minimums/maximums of $\tau$, or degenerate critical points of $\tau$ are examples of $C^1$--stable critical points. 

\begin{theorem}\label{thm:main2}
Assume that the Robin function $\tau$ admits $m$ stable critical points: $\{\xi_1^0,\ldots, \xi_m^0\}$. Then there exists $\bar \beta>0$ such that for every $-\infty<\beta\leq \bar \beta$ fixed there exists a family of solutions $(u_{1}^{\lambdab},\ldots, u_m^{\lambdab})$ of \eqref{eq:system_2eq} such that $u_i^{\lambdab}$ concentrates at $\xi_{i}^0$ as $\eps:=\max\{\lambda_1,\ldots,\lambda_m\}\to 0$, in the sense of Theorem \ref{thm:main1}.
\end{theorem}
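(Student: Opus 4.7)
The plan is to run the same Lyapunov--Schmidt reduction that underlies the proof of Theorem \ref{thm:main1}, and to replace only the final step (the production of critical points of the reduced functional) by a degree-theoretic argument which uses the $C^1$-stability hypothesis in place of local minimality.

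First, I keep the ansatz
\[
u_i = \mu_i^{-1/2}PU_{\delta_i,\xi_i}+\phi_i,\qquad \delta_i=e^{-d_i/\lambda_i},
\]
with concentration parameters $(\db,\xib)$ ranging over an admissible set $\mathcal{A}$, chosen so that the $\xi_i$ stay in mutually disjoint compact neighbourhoods of $\xi_1^0,\dots,\xi_m^0$ in $\Omega$ and the $d_i$ in a compact subinterval of $(0,+\infty)$. Standard Lyapunov--Schmidt produces, for each $(\db,\xib)\in\mathcal{A}$ and each $\lambdab$ small, a unique remainder $\phib^{\lambdab}(\db,\xib)$, orthogonal to the kernels spanned by $\partial_{\delta_i}PU_{\delta_i,\xi_i}$ and $\partial_{\xi_i}PU_{\delta_i,\xi_i}$, that solves the infinite-dimensional part of the system and satisfies $\|\phib^{\lambdab}\|_{H^1_0}\to 0$ uniformly. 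Critical points of the finite-dimensional \emph{reduced functional} $F_{\lambdab}(\db,\xib)$, obtained by substituting $\phib^{\lambdab}$ into the energy associated with \eqref{eq:system_2eq}, then correspond to genuine solutions with the announced concentration behaviour.

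Second, the $C^1$ expansion of $F_{\lambdab}$ on $\mathcal{A}$ (carried out once during the reduction, independently of the geometric nature of the critical point being detected) shows, after the natural normalization, that $F_{\lambdab}$ converges in $C^1_{\mathrm{loc}}(\mathcal{A})$ to a limit $\Phi(\db,\xib)$ whose leading part decouples as a sum of one-bubble contributions; the coupling piece proportional to $\beta$ is of strictly lower order as long as $\beta\leq\bar\beta$ is small, since bubbles sit at distinct points and the interaction integrals vanish in the limit. Solving $\partial_{d_i}\Phi=0$ explicitly pins $d_i$ to a positive multiple of $\tau(\xi_i)$; substituting back reduces the problem to finding critical points, on $U_1\times\dots\times U_m$, of a $C^1$-small perturbation of $-c\sum_{i=1}^m \tau(\xi_i)^2$, where $U_i$ is the neighbourhood of $\xi_i^0$ supplied by Definition \ref{def:stable}. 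At this point the $C^1$-stability hypothesis takes over: on each $\partial U_i$, $\nabla\tau$ does not vanish and $\deg(\nabla\tau,U_i,0)\neq 0$ by assumption, so by homotopy invariance of the Brouwer degree under $C^1$-small perturbations the reduced gradient still has a zero inside $U_1\times\dots\times U_m$ for every sufficiently small $|\lambdab|$. This yields the concentration points $\xi_i^{\lambdab}\to\xi_i^0$ and the corresponding solutions of \eqref{eq:system_2eq}.

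The principal technical hurdle is --- exactly as for Theorem \ref{thm:main1} --- establishing the $C^1$-uniform expansion of $F_{\lambdab}$ on the whole admissible set $\mathcal{A}$ under the logarithmic scaling $\delta_i=e^{-d_i/\lambda_i}$ specific to dimension four, which requires tracking lower-order logarithmic corrections and controlling the derivatives of $\phib^{\lambdab}$ with respect to $\db$ and $\xib$. Once that expansion is available, passing from ``strict local minima'' (Theorem \ref{thm:main1}) to ``$C^1$-stable critical points'' (Theorem \ref{thm:main2}) amounts precisely to the replacement of direct minimization by the degree argument described above, so the proof of Theorem \ref{thm:main2} reduces almost verbatim to that of Theorem \ref{thm:main1} in every step except the last.
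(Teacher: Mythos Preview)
Your overall strategy matches the paper: Lyapunov--Schmidt reduction, the $C^1$ energy expansion (indeed the main technical point here, carried out as a separate lemma in the paper), and a Brouwer degree argument exploiting the $C^1$-stability of the critical points of $\tau$. Two remarks, however. First, your description of the reduced problem is inaccurate: after solving for $d_i$ and substituting back, the leading term is \emph{not} $-c\sum_i\tau(\xi_i)^2$ but rather (up to constants) $-\lambda_i\,e^{-C\tau(\xi_i)/\lambda_i}$; the critical points in $\xi_i$ still coincide with those of $\tau$, so the degree argument survives, but the shape you wrote is wrong and would mislead any attempt to actually carry out the computation. Second, the paper does \emph{not} perform your two-step reduction (first solve in $d_i$, then in $\xi_i$); instead it works directly with the full gradient system in $(\db,\xib)$, divides each equation by the exponential weight $e^{-2d_i/\lambda_i}$, and computes the degree of the resulting map jointly on a product set $U\times V$ via a homotopy that decouples the $d$-variables from the $\xi$-variables. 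This is cleaner precisely because the exponential weight depends on $d_i$, so ``normalizing and passing to a limit $\Phi$'' in the way you describe is not straightforward.
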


We are also able to prove concentration as both $\beta\to -\infty$ and $\lambda_1,\ldots,\lambda_m\to 0$, for some compatible velocities of these parameters.

\begin{theorem}\label{thm:main3}
Assume that the Robin function $\tau$ admits either $m$ disjoint sets $\Lambda_1,\ldots,\Lambda_m$ with  $\inf_{\Lambda_i} \tau<\inf_{\partial \Lambda_i} \tau$, or that admits $m$ stable critical points. Let $\{ \xi_1^0,\ldots, \xi_m^0\}$ be such that $\xi_i^0\in \Lambda_i$ minimizes $\tau$ in $\Lambda_i$ in the first situation, or that $\xi_i^0$ is a $C^1$--stable critical point of $\tau$ in the second. Then if we take any function $\beta=\beta(\lambdab)<0$ satisfying (for some $\delta>0$)
\[
\beta= \textrm{o}\left(\exp\left(\frac{1}{2\lambda_i}\frac{c_4}{\omega_3}\left(\int_{\R^4}U_{1,0}^3\right)^2 \tau(\xi_i^0)  \right)\right) ,\qquad \forall i=1,\ldots, m,
\] then,  as $\lambdab\to 0$, the conclusions of Theorems \ref{thm:main1} and \ref{thm:main2} hold.
\end{theorem}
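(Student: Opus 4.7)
The plan is to revisit the Lyapunov--Schmidt reduction used to prove Theorems \ref{thm:main1} and \ref{thm:main2} in the case of fixed $\beta\leq\bar\beta$, and to let $\beta=\beta(\lambdab)$ diverge to $-\infty$ while tracking precisely how every estimate depends on $\beta$. The ansatz remains
\[
u_i^{\lambdab}=\mu_i^{-1/2}PU_{\delta_i,\xi_i}+\phi_i,
\]
with $(\delta_i,\xi_i)$ ranging in a small neighbourhood of the expected configuration ($\delta_i\sim e^{-d_i^0/\lambda_i}$, $\xi_i$ close to $\xi_i^0$), and $\phi_i$ lying in the $H^1_0(\Omega)$-orthogonal complement of the natural kernel directions $\partial_\delta PU_{\delta_i,\xi_i}$ and $\partial_{\xi^{(k)}} PU_{\delta_i,\xi_i}$.

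The key observation is that, since the prescribed centres $\xi_1^0,\ldots,\xi_m^0$ are mutually distinct, any integral involving a product of two bubbles with different indices $i\neq j$ is of the order of a positive power of $\delta_i\delta_j$, hence is exponentially small in $\lambdab$. The hypothesis
\[
\beta=\textrm{o}\!\left(\exp\!\left(\frac{d_i^0}{2\lambda_i}\right)\right),\qquad d_i^0:=\frac{c_4}{\omega_3}\left(\int_{\R^4}U_{1,0}^3\right)^2\tau(\xi_i^0),
\]
precisely translates, using $\delta_i^{\lambdab}\sim e^{-d_i^0/\lambda_i}$, into $|\beta|=\textrm{o}(\delta_i^{-1/2})$ for each $i$. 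Consequently $|\beta|$ times any $L^q$-norm of a product of two bubbles with distinct indices is $\textrm{o}(1)$ as $\lambdab\to 0$; this is the quantitative gain that drives the whole proof.

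With this gain at hand, I carry out the three stages of the reduction, tracking the $\beta$-dependence. \emph{Stage one:} the error $R(\deltab,\xib)$ obtained by substituting the ansatz into \eqref{eq:system_2eq} satisfies, in the natural dual norm, a bound of the form $\|R\|_{*}\lesssim \sum_i \lambda_i|\log\delta_i|^{1/2}+|\beta|\sum_{i\neq j}(\text{cross--bubble terms})$, both contributions being infinitesimal. \emph{Stage two:} the linearisation around the sum of bubbles decomposes as a block-diagonal operator (one block per equation, already known to be uniformly invertible on the orthogonal complement of the kernel in the Brezis--Nirenberg setting) plus an off-diagonal perturbation of operator norm $\textrm{o}(1)$ coming from the $\beta$-term; a perturbation argument then yields $\phi_i^{\lambdab}$ with $\|\phi_i^{\lambdab}\|_{H^1_0}\to 0$. \emph{Stage three:} the reduced functional expands as $\sum_{i=1}^m J_i(\delta_i,\xi_i,\lambda_i)+\textrm{o}(1)$ uniformly on the relevant compact set of parameters, where each $J_i$ is the classical single-equation reduced functional, and one concludes by a standard degree/variational argument based on the assumed structure of $\tau$, exactly as in \cite{MussoPistoiaIndiana2002} and in the proofs of Theorems \ref{thm:main1} and \ref{thm:main2}.

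The main obstacle is the uniform invertibility of the linearised operator as $\beta\to-\infty$: a priori, a very negative $\beta$ could conspire with the cross-bubble interactions to shift an eigenvalue through zero and destroy the Fredholm structure. The prescribed growth rate is precisely what prevents this, because every cross-bubble interaction carries at least one factor of order $\delta_i\delta_j$, so $|\beta|\cdot\delta_i\delta_j=\textrm{o}(\delta_i^{1/2}\delta_j)=\textrm{o}(1)$ and the off-diagonal perturbation of the linearisation stays small in operator norm. Once this uniform spectral estimate is secured, every remaining step is a quantitative refinement, with explicit $\beta$-dependence, of the argument used for fixed $\beta$.
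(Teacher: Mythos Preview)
Your overall strategy---retracing the Lyapunov--Schmidt reduction while tracking the $\beta$-dependence of every estimate---is exactly what the paper does. However, your Stage~2 contains a genuine gap.

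You assert that the linearised operator is ``block-diagonal plus an off-diagonal perturbation of operator norm $\textrm{o}(1)$'', on the grounds that ``every cross-bubble interaction carries at least one factor of order $\delta_i\delta_j$''. This is true for the genuinely off-diagonal piece $2\beta\,\mu_i^{-1/2}PU_{\delta_i,\xi_i}\sum_{j\neq i}\mu_j^{-1/2}PU_{\delta_j,\xi_j}\,\phi_j$, whose $L^{4/3}$-norm is controlled by $|\beta|\,\|PU_i\,PU_j\|_2\,\|\phi_j\|_4=\Ob(|\beta|\delta_i\delta_j\sqrt{|\ln\delta_i\delta_j|})\|\phi_j\|_4$. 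But the coupling also contributes the \emph{diagonal} term
\[
\beta\,\phi_i\sum_{j\neq i}\mu_j^{-1}(PU_{\delta_j,\xi_j})^2,
\]
and here $\|(PU_{\delta_j,\xi_j})^2\|_2=\|PU_{\delta_j,\xi_j}\|_4^2$ converges to a positive constant as $\delta_j\to 0$: there is no gain of $\delta_j$ whatsoever. As an operator on $\phi_i$ this piece has norm of order $|\beta|$, which diverges under your hypothesis, so a pure perturbation-of-the-identity argument fails. The paper does not treat this term as small; instead, in the contradiction argument for the coercivity of $L_{\deltab,\xib}$, it uses the \emph{sign} $\beta<0$ to discard it: in the identity for $\|\phi_{in}\|_{H^1_0}^2$ one has
\[
\beta\int_\Omega \phi_{in}^2\sum_{j\neq i}\mu_j^{-1}(PU_{jn})^2\leq 0,
\]
so this contribution can simply be dropped (this is estimate \eqref{eq:B'_n1}). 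Without invoking the sign, your proposed invertibility argument does not close.

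Two smaller remarks. First, your translation $|\beta|=\textrm{o}(\delta_i^{-1/2})$ is only valid at the exact scale $\delta_i=e^{-d_i^0/\lambda_i}$; over the full parameter window $d_i\in[d_i^0-\gamma,d_i^0+\gamma]$ one must work instead with $\delta_i\leq e^{-3d_i^0/(4\lambda_i)}$, which is what the paper does, and then verifies directly that the relevant products $|\beta|\delta_i^2$, $|\beta|^2\delta_i^2$, $|\beta|\delta_i\delta_j\sqrt{|\ln\delta_i|}$ all tend to zero. Second, the error bound you quote in Stage~1 is not quite right: the single-equation contribution to $\|\widetilde R_{\deltab,\xib}\|_{4/3}$ is $\Ob(\delta_i^2)+\lambda_i\Ob(\delta_i)$ (from $\|\lambda_iPU_i\|_{4/3}=\lambda_i\Ob(\delta_i)$), not $\lambda_i|\log\delta_i|^{1/2}$; and one must also carry the term $|\beta|\Ob(\delta_i\delta_j)$ through the fixed-point argument, obtaining $\|\phib\|_{H^1_0}\leq c\sum_i(\lambda_i\delta_i+|\beta|\delta_i^2)$ with $c$ independent of $\beta$.
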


\medbreak

Theorems \ref{thm:main1} and \ref{thm:main2} seem to be the first concentration results in the literature to deal with the competitive case. Due to this type of interaction, it is somehow natural to obtain solutions concentrating at different points of the domain. It is interesting to note that, combining these results with the one of Chen and Lin \cite{ChenLin}, we conclude that in the case of systems with two equations, if the Robin function of $\Omega$ admits two minimizers or two $C^1$--stable critical points, then we have the existence of two families of concentrating positive solutions: the least energy ones (considered by Chen and Lin) which concentrate at the same point, and the ones we obtain in this paper, which concentrate at two different points.

Theorem \ref{thm:main3} tells us that the case $\beta\to -\infty$ satisfies the same properties of the case $\beta<0$ fixed, as long as $\beta$ diverges with a ``sufficiently slow velocity'', as $\lambdab\to 0$. Observe that letting $\beta\to -\infty$, generally speaking, induces a segregation phenomen (since the competition increases). Actually, to be more precise, it is known that, for each $\lambdab$ fixed, if a family of solutions $(u_\beta)$ is uniformly bounded in $\beta$ for the $L^\infty$--norm, then $\|u_\beta\|_{C^{0,1}}$ is also bounded uniformly in $\beta<0$ (see \cite[Theorem 1.3]{SoaveZilio}). Thus, up to a subsequence, $u_\beta \to \bar u$ in $C^{0,\alpha}$ for every $\alpha\in (0,1)$, as $\beta\to -\infty$. Moreover, it is known that the possible limiting configurations $\bar u=(\bar u_1,\ldots, \bar u_m)$ satisfy $\bar u_i\cdot \bar u_j\equiv 0$ in $\Omega$ (a phenomenon called segregation), and that
\begin{equation}\label{eq:BN_at_the_sametime}
-\Delta \bar u_i = \mu_i \bar u_i^3 + \lambda_i \bar u_i^3 \qquad \text{ in the open set } \{u_i\neq 0\}
\end{equation}
(check for instance \cite[Theorem 1.4]{NTTV} for the case $N\leq 3$, or \cite[Theorem 1.5]{STTZ} for the general case). The $L^\infty$--bounds are generally fulfilled if the energy levels of the solutions have appropriate variational characterization. This is for instance the case with least energy solutions (check \cite[Lemma 6.1]{ChenZouARMA2012}). Thus, it is expected that, at the limit, one finds $m$ disjoint domains forming a partition of $\Omega$ and, on each of the subdomains, a Brezis-Nirenberg type problem \eqref{eq:BN_at_the_sametime} for the single equation. However, it is (in the critical case) difficult to establish under which conditions does one have $\bar u_i\not \equiv 0$ for all $i$, even for least energy solution (see Theorem 1.4 and Remark 1.3 in \cite{ChenZouARMA2012}). Theorem \ref{thm:main3} seems to be the first result where an asymptotic study is performed when both parameters $\lambda$ and $\beta$ are moving at the same time. In light of what was said in this paragraph, it does not seem obvious if one could have obtained a concentration result by letting first $\beta\to -\infty$, and afterwards $\lambdab\to 0$.

\medbreak

Let us now see some applications of these results to particular domains. 

\begin{example}[Dumbbell--type domains]
Consider any $m$ mutually disjoint connected open sets $\Omega_i$, and let $\Omega_0:=\Omega_1\cup \ldots\cup \Omega_m$. Let $\Omega_\rho$ be a set consisting of $\Omega_1,\ldots, \Omega_m$ connected by $\rho$--thin handles, that is, a connected set $\Omega_\rho=(\cup_{i=1}^m \Omega_i) \cup (\cup_{i=1}^lD_i)$ with $l\geq n-1$ where, for each $i$, $D_i$ is contained in a bounded cylinder (up to a rotation and translation) of the form $\{x=(x',x_n): |x'|\leq \rho, |x_N|\leq {\rm diam}(\Omega_0) \}$. By \cite[Lemma 3.2]{MussoPistoiaIndiana2002}, we know that as $\rho\to 0$, $\tau_{\Omega_\rho}(x)\to \tau_{\Omega_0}(x)$ $C^1$--uniformly in compact sets of $\Omega_0$. Thus, since in each $\Omega_i$ the Robin function admits a minimum, for sufficiently small $\rho$ we have that $\inf_{\Omega_i} \tau_{\Omega_\rho}<\inf_{\partial \Omega_i}\tau_{\Omega_\rho}$. Thus, given a system \eqref{eq:system_2eq} with $m$ equations, the conclusions of Theorems \ref{thm:main1} and \ref{thm:main3} hold on $\Omega_\rho$.
\end{example}

\begin{example}[Domains with holes]
Let us check that, on almost all domains with holes, $\tau$ admits at least two non-degenerate critical points, so that we can apply Theorems \ref{thm:main2}--\ref{thm:main3}, constructing families  of solutions of 2--equation systems which blowup and concentrate at two different points.

 In fact, let $\Omega$ be a bounded domain, and $S\Subset \Omega$ a domain with $\partial S\in C^k$, with $k\geq 4$. Let $\mathcal{C}^k$ be the set of $\theta: \R^N\to \R^N$ of class $C^k$ and such that $\theta=Id$ in a neighborhood of $\partial \Omega$ and $\|\theta\|_{C^k}<\infty$. Then \cite[Theorem 1.1]{MichelettiPistoiaPotential} implies that there exists $\delta$ such that the set $\{\Omega_\theta:=(Id+\theta)(\Omega\setminus S):\ \theta\in \mathcal{C}^k, \ \|\theta\|_{{C}^k}\leq  \delta, \textrm{ and all critical points of the Robin function of } \Omega_\theta \textrm{ are nondegenerate}\}$ is residual in the set $\{\Omega_\theta:=(Id+\theta)(\Omega\setminus S):\ \theta\in \mathcal{C}^k, \ \|\theta\|_{{C}^k}\leq  \delta\}$. Observe that all small perturbations of $\Omega \setminus S$ have Lusternik-Schnirelmann category at least 2, so that the corresponding Robin's function has at least two critical points.
\end{example}

\medbreak

The proof of the main theorems use a classical Lyapunov-Schmidt procedure. The assumption $-\infty<\beta\leq \bar \beta$ for $\bar \beta>0$ small will play a key role in the proof of the reduction method, more precisely to obtain inequalities \eqref{eq:B'_n1}--\eqref{eq:B'_n2} ahead. Moreover, it allows to prove that the solutions we obtain are positive. 

\medbreak

The structure of this paper is as follows. In Section \ref{sec:Preliminaries} we set the framework to perform the finite reduction, which is then the object of Section \ref{sec:Reduction}. Here we will follow the structure of \cite{MussoPistoiaIndiana2002}. With respect to the single equation, we need to take into account the interaction term $\int_\Omega \beta u_i^2 u_j^2$; moreover, since we are dealing with $N=4$, some extra care is needed, since the limiting profiles $U_{\delta,\xi}$ do not belong to $L^2(\R^4)$. Once we have reduced the problem to a finite dimensional one, we prove a $C^1$-energy expansion of the reduced functional in Section \ref{sec:EnergyExpansion}. Observe that we need to deal with the energy functional, and not only with the system, since in Theorem \ref{thm:main1} we deal with minimisers, and not only with critical points. Finally, we conclude the proofs of Theorem \ref{thm:main1}, \ref{thm:main2} and \ref{thm:main3} in Section \ref{sec:Proofs}. We have collected some useful results in Appendix \ref{sec:Appendix}.

\section{The reduction argument: preliminaries}\label{sec:Preliminaries}

In order to find blowup solutions of \eqref{eq:system_2eq}, we will use a Lyapunov-Schmidt reduction method. Since we are interested in positive solutions, we will deal with the alternative system
\begin{equation}\label{eq:system_2eq_2}
-\Delta u_i=\mu_i (u_i^+)^3 + \beta u_i \sum_{j\neq i} u_j^2+\lambda_i u_i,
\end{equation}
We are interested in a situation where $\lambda_i$ is close to zero, and so we consider from now on that $0<\lambda_i<\lambda_1(\Omega)$, where $\lambda_1(\Omega)$ denotes the first eigenvalue of $(-\Delta,H^1_0(\Omega))$. It is easy to check that, for $\beta<0$, nontrivial solutions of  \eqref{eq:system_2eq_2} are positive. This is not the case, in general, for $0<\beta<\bar \beta$, however we will be able to construct solutions of \eqref{eq:system_2eq_2} which are positive, if $\bar \beta>0$ (we refer to the end of the proof of Theorem \ref{thm:main1} for more details).

Solutions of \eqref{eq:system_2eq} correspond to critical points of the functional $E:H^1_0(\Omega,\R^m)\to \R$ defined by
\[
E(u_1,\ldots, u_m)=\sum_{i=1}^m \int_\Omega \left( \frac{1}{2}|\nabla u_i|^2-\frac{\lambda_i}{2}u_i^2-\frac{\mu_i}{4} (u_i^+)^4 \right)-\frac{\beta}{2} \mathop{\sum_{i,j=1}^m}_{i<j}\int_\Omega u_i^2u_j^2.
\]

\medbreak

We take $i^*:L^{4/3}(\Omega)\to H^1_0(\Omega)$, the adjoint operator of the embedding $i:H^1_0(\Omega)\hookrightarrow L^{4}(\Omega)$, which can be characterised by
\[
i^*(u)=v \iff \begin{cases}
-\Delta v=u\\
v\in H^1_0 (\Omega)
\end{cases}
\] 
This adjoint operator is continuous, namely there exists a constant $c>0$ such that
\begin{equation}\label{eq:i*continuous}
\|i^*(u)\|_{H^1_0}\leq c \|u \|_{4/3}\qquad \forall u\in L^{4/3}(\Omega).
\end{equation}

Using the operator $i^*$, we rewrite system \eqref{eq:system_2eq_2} as follows:
\[
u_i= i^*\left(\mu_i f(u_i)+\beta u_i \sum_{j\neq i} u_j^2 + \lambda_i u_i\right),\\
\]
where $f:\R\to \R$ is defined by $f(s)=(s^+)^3$. We will look for solutions of \eqref{eq:system_2eq} of the form
\[
u_i=\mu_i^{-1/2}PU_{\delta_i,\xi_i}+\phi_i,
\]
so that 
\begin{multline}\label{eq:system_i*1}
\mu_i^{-1/2}PU_{\delta_i,\xi_i}+\phi_i = i^*\left[\mu_i f(\mu_i^{-1/2}PU_{\delta_i,\xi_i}+\phi_i)\right.\\
\left.+\beta (\mu_i^{-1/2}PU_{\delta_i,\xi_i}+\phi_i) \sum_{j\neq i}(\mu_j^{-1/2}PU_{\delta_j,\xi_j}+\phi_j)^2 + \lambda_i (\mu_i^{-1/2}PU_{\delta_i,\xi_i}+\phi_i)\right]
\end{multline}
for $i=1,\ldots, m$. Observe that the unknowns in this system are $\delta_i$, $\xi_i$, $i=1,\ldots, m$, and the remainder terms $\phi_i\in H^1_0(\Omega)$.  In order to proceed with the reduction, we need to split the space $H^1_0(\Omega)$ into the sum of two spaces, one of which having finite dimension.

Given $\delta>0$ and $z\in \Omega$, we define
\[
\psi^0_{\delta, z}(x):=\delta \frac{\partial U_{\delta,z}}{\partial \delta}(x)=c_4 \delta\frac{|x-z|^2-\delta^2}{(\delta^2+|x-z|^2)^2}
\]
and, for $j=1,\dots, 4$,
\[
\psi^j_{\delta,z}(x):=\delta \frac{\partial U_{\delta,z}}{\partial z_j}(x)=2c_4\,\delta^2 \frac{x_j-z_j}{(\delta^2+|x-z|^2)^2}.
\]
These functions span the set of solutions of the linearized equation of \eqref{eq:equation_wholespace}, namely
\[
-\Delta \psi=3U^2_{\delta,z}\psi\quad \text{ in } \R^4
\]
(see \cite[Lemma A.1]{BianchiEgnell}), and have bounded $L^4$ norms:
\[
\int_\Omega (\psi^0_{\delta,z})^4=c_4^4 \int_{\frac{\Omega-z}{\delta}}\frac{(|y|^2-1)^4}{(1+|y|^2)^8},\quad \text{ and } \quad \int_\Omega (\psi^j_{\delta,z})^4=(2c_4)^4 \int_{\frac{\Omega-z}{\delta}} \frac{y_j^4}{(1+|y|^2)^8} \ \text{ for } j=1,\ldots, 4.
\]

Define
\[
\Lambda=\left\{ (\deltab,\xib):\ \deltab=(\delta_1,\ldots,\delta_m)\in (\R^+)^m,\ \xib=(\xi_1,\ldots,\xi_m)\in \Omega^m,\ \xi_i\neq \xi_j\ \forall i\neq j  \right\}.
\]
Given $(\deltab,\xib)\in \Lambda$ and $i\in \{1,\ldots, m\}$, we define
\[
K_i=K_{\delta_i,\xi_i} =\spann \left\{P \psi^j_{\delta_i,\xi_i},\  j=0,\ldots, 4 \right\} =P \Ker(-\Delta-3U_{\delta_i,\xi_i}^2) 
\]
and its orthogonal space
\[
K_i^\perp=K_{\delta_i,\xi_i}^\perp=\left\{ \phi\in H^1_0(\Omega):\ \langle \phi, P \psi^j_{\delta i,\xi_i}\rangle_{H^1_0}=0 \ \forall  j=0,\ldots, 4 \right\}.
\]
Moreover, we take
\[
K_{\deltab,\xib}=K_1\times\ldots\times K_m,\qquad K^\perp_{\deltab,\xib}=K_1^\perp\times \ldots\times K_m^\perp.
\]
We will also consider the projection maps 
\[
\Pi_i=\Pi_{\delta_i,\xi_i}: H^1_0(\Omega)\to K_{\delta_i,\xi_i},\qquad \Pi^\perp_i=\Pi^\perp_{\delta_i,\xi_i}:H^1_0(\Omega)\to K^\perp_{\delta_i,\xi_i},
\]
and
\begin{align*}
&\Pi_{\deltab,\xib}=\Pi_1\times\ldots\times \Pi_m:H^1_0(\Omega,\R^m)\to K_{\deltab,\xib},\\
& \Pi_{\deltab,\xib}^\perp=\Pi_1^\perp\times\ldots\times \Pi_m^\perp:H^1_0(\Omega,\R^m)\to K^\perp_{\deltab,\xib}.
\end{align*}

The $m$--equation system \eqref{eq:system_i*1} is equivalent to the system of $2m$ equations

\begin{multline}\label{eq:system_i*1K}
\Pi_i\left\{ \mu_i^{-1/2}PU_{\delta_i,\xi_i}+\phi_i \right\} = \Pi_i  \circ i^*  \left[\mu_i f(\mu_i^{-1/2}PU_{\delta_i,\xi_i}+\phi_i)\right. \\
\left.+\beta (\mu_i^{-1/2}PU_{\delta_i,\xi_i}+\phi_i)\sum_{j\neq i} (\mu_j^{-1/2}PU_{\delta_j,\xi_j}+\phi_j)^2 + \lambda_i (\mu_i^{-1/2}PU_{\delta_i,\xi_i}+\phi_i)\right]
\end{multline}

\begin{multline}\label{eq:system_i*1Kperp}
\Pi^\perp_i\left\{ \mu_i^{-1/2}PU_{\delta_i,\xi_i}+\phi_i \right\} = \Pi^\perp_i \circ i^*\left[\mu_i f(\mu_i^{-1/2}PU_{\delta_i,\xi_i}+\phi_i)\right. \\
\left.+\beta (\mu_i^{-1/2}PU_{\delta_i,\xi_i}+\phi_i)\sum_{j\neq i} (\mu_j^{-1/2}PU_{\delta_j,\xi_j}+\phi_j)^2 + \lambda_i (\mu_i^{-1/2}PU_{\delta_i,\xi_i}+\phi_i)\right] 
\end{multline}
which we will solve for $(\deltab,\xib)\in \Lambda$, and $(\phi_1,\ldots,\phi_m)\in K^\perp_{\deltab,\xib}$.
From system \eqref{eq:system_i*1Kperp}, we will obtain $\phib=(\phi_1,\ldots,\phi_m)$ as a function of $\deltab$ and $\xib$, reducing the problem to a finite-dimensional one. 

\section{Reduction to a finite dimensional problem}\label{sec:Reduction}

We now focus on \eqref{eq:system_i*1Kperp}, which we rewrite as
\[
L_{\deltab,\xib}(\phib)=R_{\deltab,\xib}+N_{\deltab,\xib}(\phib),
\]
where $R_{\deltab,\xib}=(R^1_{\deltab,\xib},\ldots,R^m_{\deltab,\xib})\in K^\perp_{\deltab,\xib}$ is given by ($i\in \{1,\ldots, m\}$) 
\[
R_{\deltab,\xib}^i=\Pi_i^\perp \circ i^*\left\{ \mu_i^{-1/2}((PU_{\delta_i,\xi_i})^3-U_{\delta_i,\xi_i}^3)+\lambda_i \mu_i^{-1/2}PU_{\delta_i,\xi_i} + \beta \mu_i^{-1/2}PU_{\delta_i,\xi_i} \sum_{j\neq i} \mu_j^{-1}(PU_{\delta_j,\xi_j})^2 \right\}, 
\]
where we have used the fact that $PU_{\delta_i,\xi_i}=i^* (U_{\delta_i,\xi_i}^3)$, as well as the positivity of the functions $U_{\delta_i,\xi_i}$ and $PU_{\delta_i,\xi_i}$. Moreover, the operators 
\[
L_{\deltab,\xib}=(L_{\deltab,\xib}^1,\ldots,L_{\deltab,\xib}^m),N_{\deltab,\xib}=(N_{\deltab,\xib}^1,\ldots,N_{\deltab,\xib}^m):K_{\deltab,\xib}^\perp\to  K_{\deltab,\xib}^\perp
\]
are defined by ($i\in \{1,\ldots, m\}$)
\begin{multline}\label{eq:def_of_L}
L_{\deltab,\xib}^i(\phib)=\Pi_{i}^\perp \left\{ \phi_i - i^* \left[ 3(P U_{\delta_i,\xi_i})^2\phi_i + \lambda_i \phi_i  + 2 \beta \mu_i^{-1/2} PU_{\delta_i,\xi_i} \sum_{j\neq i} \mu_j^{-1/2} (PU_{\delta_j,\xi_j})\phi_j  \right.\right.\\
\left.\left. 	    +\beta \phi_i \sum_{j\neq i} \mu_j^{-1}(P U_{\delta_j,\xi_j})^2 \right]  \right\}
\end{multline}
and
\begin{multline}\label{eq:N}
N^i_{\deltab,\xib}(\phib)=\Pi_{i}^\perp \circ i^*\left[ \mu_i\left( f(\mu_i^{-1/2}PU_{\delta_i,\xi_i}+\phi_i)-f(\mu_i^{-1/2}PU_{\delta_i,\xi_i})-f'(\mu_i^{-1/2}PU_{\delta_i,\xi_i})\phi_i\right)  \right.\\
		\left.   + \beta \mu_i^{-1/2} (PU_{\delta_i,\xi_i})\sum_{j\neq i} \phi_j^2 + \beta \phi_i \sum_{j\neq i}\phi_j^2 + 2\beta \phi_i \sum_{j\neq i} \mu_j^{-1/2}(P U_{\delta_j,\xi_j})\phi_j \right]
\end{multline}

Given $\eta$ small, define the set:
\[
X_{\eta}=\left\{ \xib=(\xi_1,\xi_2,\xi_3,\xi_4):\ \dist(\xi_i,\partial \Omega)\geq \eta \ \forall i,\ |\xi_i- \xi_j |\geq \eta \ \forall i\neq j  \right\}.
\]

\begin{lemma}\label{lemma:L_invertible}
There exists $\bar \beta>0$ such that, for every $-\infty<\beta\leq \bar \beta$ and every $\eta>0$ small there exist $\eps_0>0$ and $c>0$ such that, whenever $0<\lambda_i,\delta_i <\eps_0$ for every $i$, and $\xib \in X_\eta$, it holds
\[
\|L_{\deltab,\xib}(\phib)\|_{H^1_0} \geq c\| \phib\|_{H^1_0} \qquad \forall \phib \in K^\perp_{\deltab,\xib}.
\]
Furthermore, $L_{\deltab,\xib}$ is an invertible operator with continuous inverse.
\end{lemma}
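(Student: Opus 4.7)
The plan is to argue by contradiction along classical Lyapunov--Schmidt lines, following the scheme of \cite{MussoPistoiaIndiana2002} but taking care of the interaction terms and of the peculiarities of $N=4$. Assume the estimate fails; then there exist sequences $\lambda_i^n,\delta_i^n\to 0$, points $\xib^n\in X_\eta$ and $\phib^n\in K^\perp_{\deltab^n,\xib^n}$ with $\|\phib^n\|_{H^1_0(\Omega,\R^m)}=1$ but $L_{\deltab^n,\xib^n}(\phib^n)\to 0$. I would first test the $i$-th equation against $\phi_i^n\in K_i^\perp$; using $\langle i^*(g),h\rangle_{H^1_0}=\int gh$ and summing over $i$, one obtains
\begin{align*}
o(1) = 1 &- 3\sum_i\int_\Omega (PU_{\delta_i^n,\xi_i^n})^2(\phi_i^n)^2 - \sum_i \lambda_i^n\int_\Omega (\phi_i^n)^2 \\
&- 2\beta\sum_{i\neq j}(\mu_i\mu_j)^{-1/2}\int_\Omega PU_{\delta_i^n,\xi_i^n}PU_{\delta_j^n,\xi_j^n}\phi_i^n\phi_j^n - \beta\sum_{i\neq j}\mu_j^{-1}\int_\Omega (PU_{\delta_j^n,\xi_j^n})^2(\phi_i^n)^2.
\end{align*}
The $\lambda$-contribution is $o(1)$ by the $L^2$-bound on $\phi_i^n$. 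For $i\neq j$ the separation $|\xi_i^n-\xi_j^n|\geq \eta$ and $\deltab^n\to 0$ force $\|PU_{\delta_i^n,\xi_i^n}PU_{\delta_j^n,\xi_j^n}\|_2\to 0$, so the crossed cubic term is $o(1)$ by H\"older.

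The core of the argument is a linearized non-degeneracy estimate: I aim to prove that there exists a universal $\sigma>0$ such that, along the sequence, $3\int_\Omega (PU_{\delta_i^n,\xi_i^n})^2(\phi_i^n)^2\leq (1-\sigma)\|\phi_i^n\|_{H^1_0}^2+o(1)$. To this end, set $\tilde\phi_i^n(y):=\delta_i^n\phi_i^n(\xi_i^n+\delta_i^n y)$; since $N=4$ this rescaling is an isometry from $H^1_0(\Omega)$ into $\mathcal{D}^{1,2}(\R^4)$, so $\tilde\phi_i^n\rightharpoonup\tilde\phi_i$ weakly along a subsequence. Passing to the limit in the rescaled equation $L_n^i(\phib^n)\to 0$ (the interaction and $\lambda$ terms drop, and $\delta_i^n PU_{\delta_i^n,\xi_i^n}(\xi_i^n+\delta_i^n\,\cdot)\to U_{1,0}$ locally) yields $-\Delta\tilde\phi_i=3U_{1,0}^2\tilde\phi_i$ in $\R^4$, while the orthogonality to $P\psi^j_{\delta_i^n,\xi_i^n}$ passes to orthogonality to $\psi^j_{1,0}$ for $j=0,\dots,4$; by the Bianchi--Egnell non-degeneracy \cite{BianchiEgnell} one concludes $\tilde\phi_i\equiv 0$. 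Then a tail estimate based on $U_{1,0}^2\in L^2(\R^4)$ (which is valid because $\int U_{1,0}^4<\infty$ in dimension four) combined with the local compact embedding $H^1_0\hookrightarrow L^p_{loc}$ for $p<4$ gives $\int U_{1,0}^2(\tilde\phi_i^n)^2\to 0$, and rescaling back delivers the desired strict inequality.

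Feeding this back into the displayed identity and using the crude uniform bound $|\int (PU_{\delta_j^n,\xi_j^n})^2(\phi_i^n)^2|\leq \|PU_j^n\|_4^2\|\phi_i^n\|_4^2\leq C$, one arrives at $1\leq (1-\sigma) + C|\beta| + o(1)$, which is absurd once $\bar\beta>0$ is chosen so that $C\bar\beta<\sigma$. For $\beta\leq 0$ the last summand has instead favourable sign and may be discarded, so the same contradiction is obtained without any restriction on $|\beta|$. This proves the coercivity bound. Invertibility with continuous inverse follows at once: the bilinear form above is manifestly symmetric in $(\phib,\psib)$, so $L_{\deltab,\xib}$ is self-adjoint on $K^\perp_{\deltab,\xib}$, and coercivity simultaneously yields injectivity, closed range, and surjectivity.

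The principal obstacle, and the reason dimension $N=4$ requires specific attention, is that $U_{\delta,\xi}\notin L^2(\R^4)$, so manipulations that are routine in the case $N\geq 5$ (direct passage to the limit in $\int U^2\phi^2$, control of projection errors) must be recast using only the membership $U_{1,0}^2\in L^2(\R^4)$ together with a careful tail/compactness argument. A secondary subtlety is that $\int (PU_{\delta_j^n,\xi_j^n})^2(\phi_i^n)^2$ with $i\neq j$ need not vanish in the limit, because orthogonality in $K^\perp_{\deltab^n,\xib^n}$ only prevents a bubble of $\phi_i^n$ at $\xi_i^n$, not at $\xi_j^n$; this is precisely what forces the smallness restriction $\beta\leq \bar\beta$ on the cooperative side.
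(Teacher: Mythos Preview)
Your overall strategy matches the paper's: contradiction, rescaling around $\xi_i^n$, showing the weak limit vanishes, then concluding strong convergence. However, there is a genuine gap. When you write ``passing to the limit in the rescaled equation $L_n^i(\phib^n)\to 0$'', you are implicitly treating $L_n^i(\phib^n)=h_i^n$ as an honest PDE for $\phi_i^n$. It is not: the definition \eqref{eq:def_of_L} contains the projection $\Pi_i^\perp$, and unwinding it gives
\[
\phi_{i}^n = i^*\Big[3(PU_i^n)^2\phi_i^n + \lambda_i^n\phi_i^n + (\text{interaction terms})\Big] + h_{i}^n + w_{i}^n,\qquad w_{i}^n\in K_i.
\]
This extra piece $w_i^n$ does not automatically vanish; after rescaling, $\tilde w_i^n$ is a priori only bounded, and if its weak limit $\tilde w_i=\sum_k c_i^k\psi_{1,0}^k$ is nonzero the limit equation you claim for $\tilde\phi_i$ simply does not follow. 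The paper devotes its entire Step~1 to proving $w_i^n\to 0$ in $H^1_0$: one expands $w_i^n=\sum_k c_{in}^k P\psi_{in}^k$, tests the equation against $w_i^n$ itself, and uses the interaction estimates $\|(PU_j^n)^2 P\psi_{in}^{k}\|_{4/3}=\Ob(\delta_i^n\delta_j^n)$ and $\|(PU_i^n)(PU_j^n)P\psi_{in}^{k}\|_{4/3}=\Ob(\delta_i^n\delta_j^n)$ (Lemma~\ref{lemma:auxiliary_lemmas_appendix}) to force $c_{in}^k\to 0$. Your displayed identity does not see this problem because testing against $\phi_i^n\in K_i^\perp$ kills $w_i^n$ by orthogonality; it is precisely when you rescale and test against arbitrary $C_c^\infty$ functions (which are \emph{not} in $K_i^\perp$) that the issue surfaces.

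Two minor remarks. Your invertibility argument via self-adjointness of $L_{\deltab,\xib}$ is correct and is a clean alternative to the paper's use of the Fredholm alternative ($L=Id-K$ with $K$ compact). Also, your formulation ``$3\int(PU_i^n)^2(\phi_i^n)^2\leq (1-\sigma)\|\phi_i^n\|_{H^1_0}^2+o(1)$'' is weaker than what your own argument (once repaired) actually delivers, namely $\int(PU_i^n)^2(\phi_i^n)^2=o(1)$; the latter is what the paper proves and uses.
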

\begin{proof}
We will follow the structure of \cite[Lemma 1.7]{MussoPistoiaIndiana2002}. The main differences here are the fact that we work in dimension 4 (while in \cite{MussoPistoiaIndiana2002} the dimension is 5 or higher, which simplifies the computations and estimates of some integral terms), and the presence of the competition terms between components. Fix $\beta<0$,  Assume, in view of a contradiction, that there exists:
\begin{itemize}
\item $\eta>0$ and sequences $\delta_{in},\lambda_{in}\to 0$, $\xi_{in}\to \xi_i\in X_\eta$ such that $\xib_n=(\xi_{1n},\xi_{2n},\xi_{3n},\xi_{4n})\in X_\eta$, 
\item sequences $\phi_{in}\in K_i^\perp$ such that $\|\phib_n \|_{H^1_0}=1$ and $\|L_{\deltab_n,\xib_n}(\phib_n)\|_{H^1_0}=:\|\mathbf{h}_n\|_{H^1_0}\to 0$.
\end{itemize}

Observe that we have
\begin{multline}\label{eq:equation_with_h_and_w}
\phi_{in}=i^* \left[ 3(PU_{in})^2\phi_{in}+\lambda_{in}\phi_{in}+2\beta \mu_i^{-1/2}(PU_{in}) \sum_{j\neq i} \mu_{j}^{-1/2}(PU_{jn})\phi_{jn}+\beta \phi_{in} \sum_{j\neq i} \mu_j^{-1}(PU_{jn})^2 \right]\\
								+h_{in}+w_{in}
\end{multline}
with $w_{in}\in K_i$. Here we have denoted $U_{in}:=U_{\delta_{in},\xi_{in}}$. We will also make the identification $\psi^k_{in}:=\psi^k_{\delta_{in},\xi_{in}}$, $k=0,\ldots, 4$.

\medbreak

\noindent \emph{Step 1.} Let us check that $w_{in}\to 0$ in $H^1_0$, as $n\to +\infty$.

We have
\[
w_{in}=\sum_{k=0}^4 c_{in}^k (P\psi_{in}^k),
\]
for some coefficients $c_{in}^k$. Thus, taking in consideration Lemma \ref{lemma:expansion_of_PU} (identities \eqref{eq:expansion2} and \eqref{eq:expansion3}), and the definition of projection, we have for every $k,j\in \{0,1,\ldots, 4\}$:
\[
\inner{P\psi^j_{in}}{P\psi_{in}^k}=\int_\Omega \nabla P\psi^j_{in}\cdot \nabla P\psi^k_{in}=\int_\Omega 3(U_{in})^2 \psi_{in}^j P\psi_{in}^k=\int_\Omega 3(U_{in})^2 \psi^j_{in}\psi^k_{in}+\ob_n(1).
\]
After a change of variables $y=\xi_{in}+\delta_{in}x$, one can easily check that the integral $\int_\Omega (U_{in})^2 \psi^j_{in}\psi^k_{in}\to \sigma_{jk}$  as $n\to \infty$, where: 
\[
\sigma_{00}:=\int_{\R^4} \frac{c_4^4(|y|^2-1)^2}{(1+|y|^2)^6}\, dy>0,\quad \sigma_{jj}:=\int_{\R^4} \frac{4c_4^4 y_j^2}{(1+|y|^2)^6}\, dy>0\quad  (j=1,\ldots, 4),
\]
and
\[
\begin{split}
&\sigma_{0j}=\sigma_{j0}:=\int_{\R^4} \frac{2c_4^4 (|y|^2-1)y_j}{(1+|y|^2)^6}\, dy=0 \quad (j\geq 1),\\
&\sigma_{jk}=\sigma_{kj}:=\int_{\R^4} \frac{4c_4^4 y_jy_k}{(1+|y|^2)^6}\, dy=0 \quad (j,k\geq 1,\ j\neq k).
\end{split}
\]
In conclusion, we have that
\begin{equation}\label{eq:w_{in}^2=}
\|w_{in}\|_{H^1_0}^2=\sum_{j,k=0}^4 c_{in}^j c_{in}^k\inner{P\psi_{in}^j}{P\psi_{in}^k}=\sum_{k=0}^4 (c_{in}^k)^2\sigma_{kk}+\mathop{\sum_{j,k=0}^4}_{j\neq k} c_{in}^j c_{i n}^k \ob_n(1).
\end{equation}

On the other hand, using equation \eqref{eq:equation_with_h_and_w},
\begin{multline*}
\|w_{in}\|^2_{H^1_0}=\underbrace{-\int_\Omega 3 (PU_{in})^2\phi_{in} w_{in}}_{\textrm{I}_n} -\lambda_{in} \int_\Omega \phi_{in} w_{in}\underbrace{-\beta\int_\Omega \phi_{in} w_{in}\sum_{j\neq i} \mu_j^{-1}(PU_{jn})^2}_{\textrm{II}_n}\\
			\underbrace{-2\beta\int_\Omega \mu_i^{-1/2}(PU_{in})w_{in} \sum_{j\neq i} \mu_{j}^{-1/2}(PU_{jn})\phi_{jn}}_{\textrm{III}_n}+\inner{\phi_{in}}{w_{in}}-\inner{h_{in}}{w_{in}}.
\end{multline*}
Observe that $\inner{h_{in}}{w_{in}}=0$ and that
\[
0=\inner{\phi_{in}}{w_{in}}=\sum_{k=0}^4 c_{in}^k \inner{\phi_{in}}{P\psi^k_{in}}=\sum_{k=0}^4c_{in}^k \int_\Omega (U_{in})^2 \psi_{in}^k \phi_{in}.
\]
By using this last inequality, we see that
\begin{equation}
\begin{split}\label{eq:I_n}
\textrm{I}_n&=3\int_\Omega \left((U_{in})^2-(PU_{in})^2\right)\phi_{in}w_{in}-3\int_\Omega (U_{in})^2 \phi_{in} w_{in}\\ 
	&=3\int_\Omega \left((U_{in})^2-(PU_{in})^2\right)\phi_{in}w_{in}+\sum_{k=0}^4 3c_{in}^k \int_\Omega (U_{in})^2\phi_{in} \left(\psi_{in}^k-P\psi_{in}^k\right).
\end{split}
\end{equation}

Moreover,
\[
\textrm{II}_n=-\beta \sum_{k=0}^4 c_{in}^k  \int_\Omega  (P\psi^k_{in})\phi_{in}  \sum_{j\neq i} \mu_j^{-1}(PU_{jn})^2
\]
and
\[
\textrm{III}_n=-2\beta \mu_i^{-1/2} \sum_{k=0}^4 c_{in}^k \int_\Omega   (P\psi^k_{in})(PU_{in})\sum_{j\neq i}\mu_j^{-1/2}(PU_{jn})\phi_{jn}.
\]
Thus we have
\[
\begin{split}
\|w_{in}\|_{H^1_0}^2\leq & 3  \left\| (U_{in})^2-(PU_{in})^2\right\|_2\|\phi_{in}\|_4\|w_{in}\|_4 +3\sum_{k=0}^4 |c_{in}^k| \|U_{in}\|_4^2 \|\phi_{in}\|_4^2 \left\|\psi_{in}^k-P\psi_{in}^k \right\|_4\\
				& +\lambda_{in} \|\phi_{in}\|_2\|w_{in}\|_2+|\beta| \sum_{k=0}^4 |c_{in}^k| \|\phi_{in}\|_4\sum_{j\neq i} \mu_j^{-1} \left\| (PU_{jn})^2P\psi_{in}^k \right\|_{4/3}\\
				&+2|\beta| \mu_{i}^{-1/2} \sum_{k=0}^4  |c_{in}^k|  \sum_{j\neq i} \mu_j^{-1/2} \|\phi_{jn}\|_4 \left\|(PU_{in})(PU_{jn})P\psi^k_{in} \right\|_{4/3}.
\end{split}
\]
Lemma \ref{lemma:expansion_of_PU} yields that $\left\| (U_{in})^2-(PU_{in})^2\right\|_2\to 0$ and $\left\|\psi_{in}^k-P\psi_{in}^k \right\|_4\to 0$ as $n\to \infty$. Moreover, from Lemma \ref{lemma:auxiliary_lemmas_appendix} we have that  $\left\| (PU_{jn})^2P\psi_{in}^k \right\|_{4/3}=\textrm{O}({\delta_{in}\delta_{jn}})$ and $\left\|(PU_{in})(PU_{jn})P\psi^k_{in} \right\|_{4/3}=\textrm{O}(\delta_{in}\delta_{jn})$. In conclusion, we have
\begin{equation}\label{w_{in}=2nd}
\|w_{in}\|_{H^1_0}^2\leq \ob_n(1) \|w_{in}\|_{H^1_0} +\ob_n(1)\sum_{k=0}^4 |c_{in}^k|.
\end{equation}
Combining this with \eqref{eq:w_{in}^2=}, 
\[
\sum_{k=0}^4 (c_{in}^k)^2 \sigma_{kk}\leq \ob_n(1) \sum_{k=0}^4 |c_{in}^k|+\ob_n(1)\sum_{k=0}^4 (c_{in}^k)^2.
\]
This implies that the sequences $\{c_{in}^k\}$ are bounded in $n$, and consequently, going back to \eqref{w_{in}=2nd}, $w_{in}\to 0$ as $n\to \infty$. This ends the first step of the proof.

\medbreak

\noindent \emph{Step 2.} Define 
\[
\tilde \phi_{in}(y)=\delta_{in} \phi_{in}(\xi_{in}+\delta_{in}y), 
\]
which satisfies
\[
\| (\tilde \phi_{1n},\ldots, \tilde \phi_{mn})\|_{H^1_0\left(\frac{\Omega-\xi_{in}}{\delta_{in}}\right)}=\| (\phi_{1n},\ldots, \phi_{mn})\|_{H^1_0(\Omega)}=1,
\]
so that there exists $\tilde \phi_i$ such that $\tilde \phi_{in}\rightharpoonup \tilde \phi_i$ in $\mathcal{D}^{1,2}(\R^4)$.
The aim of this step is to prove that $\tilde \phi_i\equiv 0$, that is,
\begin{equation}\label{eq:aim_of_step2}
\tilde \phi_{in}\rightharpoonup 0 \qquad \text{ in } \mathcal{D}^{1,2}(\R^4).
\end{equation}

We rewrite \eqref{eq:equation_with_h_and_w}, obtaining 
\begin{multline*}
\tilde \phi_{in}=\delta_{in}^2\, i^*\left[3(PU_{in})^2(\xi_{in}+\delta_{in}\cdot)\tilde \phi_{in} + \lambda_{in} \tilde \phi_{in}  +\beta \tilde \phi_{in} \sum_{j\neq i} \mu_j^{-1} (PU_{jn})^2(\xi_{in}+\delta_{in}\cdot)    + \right.\\
		\left. +2\delta_{in}\beta \mu_i^{-1/2}(PU_{in})(\xi_{in}+\delta_{in}\cdot) \sum_{j\neq i} \mu_j^{-1/2}(PU_{jn})(\xi_{in}+\delta_{in}\cdot)  \phi_{jn}(\xi_{in}+\delta_{in}\cdot)\right] + \tilde h_{in} +\tilde w_{in},
\end{multline*}
with 
\[
\tilde w_{in}(y):=\delta_{in} w_{in}(\xi_{in}+\delta_{in} y),\qquad \tilde h_{in}(y):=\delta_{in} h_{in}(\xi_{in}+\delta_{in} y).
\]
Observe that $\tilde h_{in}\to 0$ and (by Step 1) $\tilde w_{in}\to 0$ strongly in $\mathcal{D}^{1,2}(\R^4)$.

Take $\psi\in \mathcal{C}^\infty_{\rm c}(\R^4)$ and $n$ large so that
\[
K_\psi:=\textrm{supp} \psi \subset \Omega_{in}:= \frac{\Omega-\xi_{in}}{\delta_{in}}.
\]
We have
\[
\begin{split}
\int_{\Omega_{in}}\nabla \tilde \phi_{in}\cdot \nabla \psi= &\underbrace{\delta_{in}^2 \int_{\Omega_{in}} 3 (PU_{in})^2(\xi_{in}+\delta_{in}y) \tilde \phi_{in}(y) \psi(y)}_{A_n} +\delta_{in}^2 \lambda_{in} \int_{\Omega_{in}}\tilde \phi_{in} \psi\\
					 &+\int_{\Omega_{in}}\nabla (\tilde h_{in}+\tilde w_{in})\cdot \nabla \psi  +\underbrace{\delta_{in}^2 \beta \int_{\Omega_{in}} \tilde \phi_{in}(y) \psi(y) \sum_{j\neq i} \mu_j^{-1} (PU_{jn})^2(\xi_{in}+\delta_{in}y)}_{B_n}\\
					 &+\underbrace{2\beta \delta_{in}^3 \mu_i^{-1/2}\int_{\Omega_{in}}(PU_{in})(\xi_{in}+\delta_{in}y)\psi(y) \sum_{j\neq i} \mu_j^{-1/2}(PU_{jn})(\xi_{in}+\delta_{in}y)\phi_{jn}(\xi_{in}+\delta_{in}y)}_{C_n}.
\end{split}
\]
We have $\int_{\Omega_{in}}\nabla(\tilde h_n+\tilde w_{n})\cdot \nabla \psi\to 0$,
\begin{equation}\label{eq_An}
\begin{split}
A_n &=\delta_{in}^2 \int_{\Omega_{in}} 3(U_{in})^2(\xi_{in}+\delta_{in}y) \tilde \phi_{in}(y) \psi(y)+\ob_n(1)=\int_{\Omega_{in}\cap K_\psi} \frac{3c_4^2}{(1+|y|^2)^2}\tilde \phi_{in}(y)\psi(y)+\ob_n(1)\\
	&\to \int_{\R^4} \frac{3c_4^2 }{(1+|y|^2)^2}\tilde \phi_i(y) \psi(y) =\int_{\R^4} 3U_{1,0}^2 \tilde \phi_{i} \psi.
\end{split}
\end{equation}
Moreover,
\begin{equation}\label{eq:B_n}
\begin{split}
B_n&=\delta_{in}^2 \beta \int_{\Omega_{in}} \tilde \phi_{in}(y) \psi(y) \sum_{j\neq i} \mu_j^{-1} (U_{jn})^2(\xi_{in}+\delta_{in} y)+ \beta \ob_n(1)\\
	&=\delta_{in}^2 \delta_{jn}^2\beta \int_{\Omega_{in}\cap K_\psi} \tilde \phi_{in} \psi \sum_{j\neq i} \mu_j^{-1}\frac{c_4^2}{(\delta_{jn}^2+|\delta_{in} y + \xi_{in}-\xi_{jn}|^2)^2}+\beta \ob_n(1)\to 0
\end{split}
\end{equation}
(in fact, $y\in K_\psi$ and $|\xi_{in}-\xi_{jn}|\geq \eta$, so that $(\delta_{jn}^2+|\delta_{in} y + \xi_{jn}-\xi_{in})^2\geq \eta/2$ for large $n$). Analogously,
\begin{equation}\label{eq:C_n}
\begin{split}
C_n&=2\beta  \mu_i^{-1/2}  \delta_{in} c_4  \int_{\Omega_{in}}\frac{ \psi(y)}{1+|y|^2} \sum_{j\neq i} \mu_j^{-1/2} \frac{\delta_{jn} c_4}{\delta_{jn}^2+|\delta_{in}y+\xi_{in}-\xi_{jn}|^2}\delta_{in}\phi_{jn}(\delta_{in}y+\xi_{in}) +\beta \ob_n(1)\\
	&\leq C \beta \delta_{in} \delta_{jn} \|\phi_{jn}\|_{H^1_0(\Omega)} \|U_{1,0}\|_{8/3}\|\psi\|_{8/3}+\beta \ob_n(1)\to 0.
\end{split}
\end{equation}
Thus, at the limit, we find that
\[
-\Delta \tilde \phi_i= 3U_{1,0}^2 \tilde \phi_i \quad \text{ in } \R^4,\qquad \tilde \phi_i \in \mathcal{D}^{1,2}(\R^4).
\]
Let us check that $\tilde \phi_i=0$. For that, it is enough to show that $\tilde \phi_i \in (\Ker(-\Delta-3U_{1,0}^2))^\perp$, that is,
\begin{equation}\label{eq:tilde_phi_orthogonal}
\int_{\R^4} \nabla \tilde \phi_i \cdot \nabla \psi^j_{1,0}=0,\qquad \forall j=0,\ldots, 4.
\end{equation}
Let us check this for $j=0$: we have
\[
0=\inner{\phi_{in}}{P\psi^0_{in}}=\int_\Omega 3 (U_{in})^2 \psi^0_{in} \phi_{in} =\int_{\Omega_{in}}3c_4^3\frac{|y|^2-1}{(1+|y|^2)^4}\tilde \phi_{in}=\int_{\Omega_{in}}3U_{1,0}^2 \psi^0_{1,0} \tilde \phi_{in}.
\]
As $\tilde \phi_{in}\rightharpoonup \tilde \phi_i$ in $L^4(\R^4)$, and $U_{1,0}^2 \psi^0_{1,0}\in L^{4/3}(\R^4)$, then passing to the limit we obtain
\[
\int_{\R^4}3U_{1,0}^2 \psi_{1,0}^0 \tilde \phi_i=0.
\]
This, combined with the equation for $\tilde \phi_i$, implies \eqref{eq:tilde_phi_orthogonal} for $j=0$. The proof for $j=1,\ldots, 4$ is analogous. With this, we have concluded that $\tilde \phi_{in}\rightharpoonup 0$ weakly in $\mathcal{D}^{1,2}(\R^4)$, which was the goal of this step.

\medbreak

\noindent \emph{Step 3.} Let us check that actually $ \phi_{in}\to 0$ strongly in $H^1_0(\Omega)$, for every $i=1,\ldots, m$. This contradicts $\| \phib_n\|_{H^1_0(\Omega)}=1$, and concludes the proof of the first part of the lemma.

Going back to the equation for $\phi_{in}$ (see \eqref{eq:equation_with_h_and_w}), we get
\begin{equation}\label{eq:phi_in}
\begin{split}
\|\phi_{in}\|^2_{H^1_0} =&\underbrace{\int_{\Omega} 3(PU_{in})^2 \phi_{in}^2}_{A'_n}+ \lambda_{in} \int_{\Omega} \phi_{in}^2+\inner{h_{in}+w_{in}}{\phi_{in}}+ \underbrace{\beta \int_{\Omega}  \phi_{in}^2 \sum_{j\neq i} \mu_j^{-1} (PU_{jn})^2}_{B'_n}\\
				&+\underbrace{2\beta \mu_{i}^{-1/2} \int_{\Omega}  (PU_{in}) \phi_{in} \sum_{j\neq i} \mu_j^{-1/2}(PU_{jn})\phi_{jn}}_{C'_n}.
\end{split}
\end{equation}
We have, since $\phi_{in}$ is bounded in $H^1_0(\Omega)$, that $\inner{ h_{in}+ w_{in}}{ \phi_{in}}\to 0$. Moreover,
\[
\lambda_{in}\int_\Omega \phi_{in}^2\to 0.
\]
Since $\tilde \phi^2_{in}\rightharpoonup 0$ in $L^2(\R^4)$ and $(1+|y|^2)^{-2}\in L^2(\R^4)$, then
\[
A_n'=\int_\Omega 3(U_{in})^2\phi_{in}^2+\ob_n(1) =3c_4^2 \int_{\Omega_{in}} \frac{(\tilde \phi_{in})^2(y)}{(1+|y|^2)^2} + \ob_n(1)\to 0.
\]
The term $B'_n$ does not converge to 0, but we can rule it out. In fact, if $\beta<0$, then
\begin{equation}\label{eq:B'_n1}
B'_n= \beta \int_{\Omega}  \phi_{in}^2 \sum_{j\neq i} \mu_j^{-1} (PU_{jn})^2\leq 0.
\end{equation}
If on the other hand $0<\beta\leq \bar \beta$, we have
\begin{equation}\label{eq:B'_n2}
B'_n= \beta \int_{\Omega}  \phi_{in}^2 \sum_{j\neq i} \mu_j^{-1} (PU_{jn})^2 \leq \bar \beta \|\phi_{in}\|_4^2 \sum_{j\neq i} \mu_j^{-1} \|PU_{jn}\|^2_4\leq C \bar \beta \|\phi_{in}\|^2_{H^1_0} \leq \frac{1}{2}\|\phi_{in}\|_{H^1_0}^2,
\end{equation}
for $\bar \beta>0$ sufficiently small.

Finally, the term $C'_n$ can be estimated from above in the following way:
\[
C'_n=2\beta \mu_{i}^{-1/2}(C_n^1+C_n^2+\sum_{j\neq i}\Ob(\delta_{in}\delta_{jn})),
\]
with
\[
\begin{split}
|C_n^1| &= \left| \int_{B_{\frac{\eta}{2}}(\xi_{in})}  (PU_{in}) \phi_{in} \sum_{j\neq i} \mu_j^{-1/2} (PU_{jn}) \phi_{jn} \right|\\
		&\leq \sum_{j\neq i} \mu_j^{-1/2} \left( \int_{B_{\frac{\eta}{2}}(\xi_{in})} (PU_{in})^2(PU_{jn})^2\right)^{1/2}\left(\int_\Omega \phi_{in}^2\phi_{jn}^2\right)^{1/2}\\
		&\leq \sum_{j\neq i} C \delta_{in}\delta_{jn} \left( \int_{B_{\frac{\eta}{2\delta_{in}}}(0)} \frac{1}{(1+|y|^2)^2}\right)^{1/2}=\sum_{j\neq i}\Ob(\delta_{in}\delta_{jn} \sqrt{|\ln \delta_{in}|}).
\end{split}
\]
Analogously,
\[
|C_n^2| =\left| \int_{B_{\frac{\eta}{2}}(\xi_{in})}  PU_{in}(x) \phi_{in} \sum_{j\neq i} \mu_j^{-1/2}  PU_{jn}(x) \phi_{jn} \right| =\sum_{j\neq i}\Ob(\delta_{jn}\delta_{in} \sqrt{|\ln \delta_{jn}|}).
\]
So 
\begin{equation}\label{eq:C'_n}
C'_n=\beta \sum_{j\neq i} \Ob(\delta_{in}\delta_{jn} \sqrt{|\ln \delta_{in}|})\to 0.
\end{equation}
By combining all this with \eqref{eq:phi_in}, we deduce that $\phi_{in}\to 0$ strongly in $H^1_0(\R^4)$, as wanted.

\medbreak

\noindent \emph{Step 4.} (invertibility) The only thing left to prove, at this point, is that $L_{\deltab,\xib}$ is an invertible operator. Since  $i^*$ is a compact operator from  $L^{4/3}(\Omega)$ to $H^1_0(\Omega)$, expression \eqref{eq:def_of_L} readily implies that $L_{\delta_i,\xi_i}=Id-K$, where $K$ is compact. We have already shown that
\begin{equation}\label{eq:inequality_L_invertible_aux}
\|L_{\deltab,\xib}(\phib)\|_{H^1_0}\geq c\| \phib\|_{H^1_0}\qquad \text{ for every } \phib\in K_{\deltab,\xib}^\perp,
\end{equation}
so $L_{\deltab,\xib}$ is injective. Thus, by Fredholm's alternative theorem, it is also surjective. Thus $L_{\deltab,\xib}$ is invertible, and by \eqref{eq:inequality_L_invertible_aux} it follows that $L^{-1}_{\deltab,\xib}$ is continuous. \qedhere

\end{proof}

\begin{proposition}\label{prop:estimate_error}
Let $-\infty<\beta\leq \bar \beta$. Then for every $\eta>0$ small there exist $\eps_0>0$ and $c>0$ such that, whenever $0<\lambda_i,\delta_i <\eps_0$ for all $i$, and $\xib \in X_\eta$, there exists a unique $\phib=\phib^{\deltab,\xib}\in K^\perp_{\deltab,\xib}$ solving
\begin{equation}\label{eq:equation_fixedpoint}
L_{\deltab,\xib}(\phib)=R_{\deltab,\xib}+N_{\deltab,\xib}(\phib),
\end{equation}
and satisfying
\[
\|\phib\|_{H^1_0} \leq c  \sum_{i=1}^m (\lambda_i \delta_i +\delta_i^2)
\]
(for some $c>0$ depending on $\beta$, but independent from $\deltab$ and $\lambdab$).
\end{proposition}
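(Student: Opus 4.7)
The plan is to recast the system \eqref{eq:equation_fixedpoint} as the fixed-point problem
\[
\phib = T_{\deltab,\xib}(\phib) := L_{\deltab,\xib}^{-1}\bigl(R_{\deltab,\xib} + N_{\deltab,\xib}(\phib)\bigr),
\]
which is legitimate on the parameter range of the statement by Lemma~\ref{lemma:L_invertible}, and then to apply the Banach contraction principle on a closed ball in $K^\perp_{\deltab,\xib}$ of radius comparable to $\sum_i(\lambda_i\delta_i+\delta_i^2)$. Since $L_{\deltab,\xib}^{-1}$ has operator norm bounded independently of $\deltab,\xib,\lambdab$ by that lemma, the whole scheme reduces to two quantitative estimates: an upper bound on the error term $R_{\deltab,\xib}$ and a quadratic Lipschitz bound for $N_{\deltab,\xib}$ near the origin.

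First, I would estimate $\|R_{\deltab,\xib}\|_{H^1_0}$ by bounding the $L^{4/3}$-norm of each of its three summands, using the continuity \eqref{eq:i*continuous} of $i^*$ and the fact that orthogonal projections do not increase the $H^1_0$-norm. Factoring
\[
(PU_{\delta_i,\xi_i})^3 - U_{\delta_i,\xi_i}^3 = \bigl(PU_{\delta_i,\xi_i} - U_{\delta_i,\xi_i}\bigr)\bigl((PU_{\delta_i,\xi_i})^2 + PU_{\delta_i,\xi_i}\,U_{\delta_i,\xi_i} + U_{\delta_i,\xi_i}^2\bigr)
\]
and invoking the expansion of $PU_{\delta_i,\xi_i}-U_{\delta_i,\xi_i}$ in terms of the regular part of the Green function (Lemma~\ref{lemma:expansion_of_PU}), together with H\"older's inequality, yields an $\Ob(\delta_i^2)$ contribution; the scaling $y=\xi_i+\delta_i z$ gives $\|PU_{\delta_i,\xi_i}\|_{4/3}=\Ob(\delta_i)$, hence the linear term contributes $\Ob(\lambda_i\delta_i)$; and for the competition piece the separation $|\xi_i-\xi_j|\geq \eta$ provided by $\xib\in X_\eta$, together with Lemma~\ref{lemma:auxiliary_lemmas_appendix}, produces $\|PU_{\delta_i,\xi_i}(PU_{\delta_j,\xi_j})^2\|_{4/3}=\Ob(\delta_i\delta_j^2)$. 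Since $\delta_i\delta_j^2\leq \tfrac13\delta_i^3+\tfrac23\delta_j^3\leq \eps_0(\delta_i^2+\delta_j^2)$, this last contribution is absorbed into $\sum_i(\lambda_i\delta_i+\delta_i^2)$ once $\eps_0$ is chosen small compared to $1/|\beta|$, giving $\|R_{\deltab,\xib}\|_{H^1_0}\leq c\sum_i(\lambda_i\delta_i+\delta_i^2)$.

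Next, I would verify that $N_{\deltab,\xib}$ is a genuine quadratic remainder. The first summand in \eqref{eq:N} is the second-order Taylor remainder of $s\mapsto \mu_i f(\mu_i^{-1/2}PU_{\delta_i,\xi_i}+s)$; since $f(s)=(s^+)^3$ has $f'(s)=3(s^+)^2$ with Lipschitz constant growing linearly, one obtains the pointwise bound $c(PU_{\delta_i,\xi_i}|\phi_i|^2+|\phi_i|^3)$, and applying H\"older with $\tfrac14+\tfrac14+\tfrac14=\tfrac34$ together with $\|PU_{\delta_i,\xi_i}\|_4=\Ob(1)$ yields
\[
\|N_{\deltab,\xib}(\phib)\|_{H^1_0}\leq c\bigl(\|\phib\|_{H^1_0}^2+\|\phib\|_{H^1_0}^3\bigr),
\]
\[
\|N_{\deltab,\xib}(\phib_1)-N_{\deltab,\xib}(\phib_2)\|_{H^1_0}\leq c\bigl(\|\phib_1\|_{H^1_0}+\|\phib_2\|_{H^1_0}\bigr)\,\|\phib_1-\phib_2\|_{H^1_0},
\]
where the three interaction summands in \eqref{eq:N}, being quadratic or cubic in $\phib$ with factors $PU_{\delta_j,\xi_j}$ of bounded $L^4$-norm, are estimated analogously.

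Setting $\rho:=K\sum_i(\lambda_i\delta_i+\delta_i^2)$ with $K$ large enough and $\eps_0$ sufficiently small, the two bounds above readily show that $T_{\deltab,\xib}$ maps the closed ball $\overline{B_\rho}\subset K^\perp_{\deltab,\xib}$ into itself and is a strict contraction on it. Banach's fixed point theorem then delivers the unique $\phib^{\deltab,\xib}\in \overline{B_\rho}$ with the announced norm bound. The main technical obstacle lies in the error estimate in dimension $4$: because $U_{\delta,\xi}\notin L^2(\R^4)$, the usual higher-dimensional computations of \cite{MussoPistoiaIndiana2002} pick up logarithmic factors (as already visible in Step~3 of the proof of Lemma~\ref{lemma:L_invertible}), so one must work exclusively through the $L^{4/3}$--$L^4$ duality and carefully exploit the separation $\xib\in X_\eta$ to keep every cross-interaction term subordinate to $\sum_i(\lambda_i\delta_i+\delta_i^2)$.
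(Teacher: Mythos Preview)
Your proposal follows exactly the paper's approach: recast \eqref{eq:equation_fixedpoint} as a fixed point for $T_{\deltab,\xib}=L_{\deltab,\xib}^{-1}(R_{\deltab,\xib}+N_{\deltab,\xib}(\cdot))$, bound the $L^{4/3}$-norm of each summand of $R$, bound $N$ quadratically via Lemma~\ref{lemma:auxiliaryestimate}, and close with Banach's contraction principle on a ball of radius $c\sum_i(\lambda_i\delta_i+\delta_i^2)$. One minor correction: the competition term actually satisfies $\|PU_{\delta_i,\xi_i}(PU_{\delta_j,\xi_j})^2\|_{4/3}=\Ob(\delta_i\delta_j)$ rather than $\Ob(\delta_i\delta_j^2)$ (and Lemma~\ref{lemma:auxiliary_lemmas_appendix} treats $P\psi$-products, not this one; the paper computes this directly via Lemmas~\ref{lemma:expansion_of_PU}, \ref{lemma:integral_estimates} and \ref{lemma:auxiliaryestimate}), but since $\delta_i\delta_j\leq\tfrac12(\delta_i^2+\delta_j^2)$ this is absorbed straight into $c\sum_i\delta_i^2$ with $c$ depending on $\beta$, so your argument goes through unchanged and in fact a bit more simply.
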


\begin{proof}
From Lemma \ref{lemma:L_invertible}, we have that $L_{\deltab,\xib}$ is invertible, and thus \eqref{eq:equation_fixedpoint} is equivalent to
\[
\phib=L_{\deltab,\xib}^{-1}(R_{\deltab,\xib}+N_{\deltab,\xib}(\phib))=:T_{\deltab,\xib}(\phib).
\]
By combining the continuity of $L^{-1}_{\deltab,\xib}$, the expressions for $R_{\deltab,\xib}$ and $N_{\deltab,\xib}$, and \eqref{eq:i*continuous}, we deduce that
\begin{align*}
\|T_{\deltab,\xib}(\phib)\|_{H^1_0} &\leq c_1\|R_{\deltab,\xib} + N_{\deltab,\xib}(\phib)\|_{H^1_0}  \leq c_2\left(\|\widetilde R_{\deltab,\xib}\|_{4/3}+\|\widetilde N_{\deltab,\xib}(\phib)\|_{4/3}\right)
\end{align*}
with $\widetilde R_{\deltab,\xib}=(\widetilde{R}^1_{\deltab,\xib},\ldots, \widetilde{R}_{\deltab,\xib}^m)\in L^{4/3}(\Omega,\R^m)$ and $\widetilde N_{\deltab,\xib}=(\widetilde{N}^1_{\deltab,\xib},\ldots, \widetilde{N}_{\deltab,\xib}^m):H^1_0(\Omega,\R^m)\to L^{4/3}(\Omega,\R^m)$, given by
\[
\widetilde{R}_{\deltab,\xib}^i= \mu_i^{-1/2}((PU_i)^3-U_i^3)+\lambda_i \mu_i^{-1/2}PU_i +\beta \mu_i^{-1/2}PU_i \sum_{j\neq i} \mu_j^{-1}(PU_j)^2
\]
and
\begin{multline*}
\widetilde{N}^i_{\deltab,\xib}(\phib)=\mu_i\left( f(\mu_i^{-1/2}PU_i+\phi_i)-f(\mu_i^{-1/2}PU_i)-f'(\mu_i^{-1/2}PU_i)\phi_i\right) \\
		  + \beta \mu_i^{-1/2} (PU_i)\sum_{j\neq i} \phi_j^2 + \beta \phi_i \sum_{j\neq i}\phi_j^2 + 2\beta \phi_i \sum_{j\neq i} \mu_j^{-1/2}(P U_j)\phi_j,
\end{multline*}
$i=1,\ldots, m$. Here, we are using the notation $U_i:=U_{\delta_i,\xi_i}$.

\medbreak

\noindent \emph{- Estimate for $\widetilde{N}_{\deltab,\xib}(\phib)$:} First of all observe that there exists $C>0$ such that
\begin{multline*}
\left \|\mu_i\left( f(\mu_i^{-1/2}PU_i+\phi_i)-f(\mu_i^{-1/2}PU_i)-f'(\mu_i^{-1/2}PU_i)\phi_i\right)\right\|_{4/3}\\
\leq C (\|(PU_i) \phi_i^2\|_{4/3} + \|\phi_i^3\|_{4/3}) \leq C \left( \|\phib\|_{H^1_0}^2+\|\phib\|_{H^1_0}^3 \right),
\end{multline*}
where we have used Lemma \ref{lemma:auxiliaryestimate}. Moreover,
\[
\left \|\beta \phi_i \sum_{j\neq i}\phi_j^2\right \|_{4/3}\leq C|\beta| \|\phib\|_{H^1_0}^3
\]
and
\[
\left\| \beta \mu_i^{-1/2} (PU_i)\sum_{j\neq i} \phi_j^2+2\beta \phi_i \sum_{j\neq i} \mu_j^{-1/2}(P U_j)\phi_j \right \|_{4/3} \leq C |\beta| \|\phib\|_{H^1_0}^2,
\]
since $\|PU_i\|_{4}\leq C$. Therefore, there exists $C>0$ such that
\begin{equation}\label{eq:N_deltaxi}
\left\| \widetilde N_{\deltab,\xib}(\phib)\right\|_{4/3}\leq C(1+|\beta|) \left(\|\phib\|_{H^1_0}^2+\|\phib\|_{H^1_0}^3\right).
\end{equation}

\medbreak

\noindent \emph{- Estimate for $\widetilde R_{\deltab,\xib}$:}
Now let us estimate the velocity of $\widetilde{R}_{\deltab,\xib}$. 
\medbreak

Using Lemmas \ref{lemma:expansion_of_PU} and \ref{lemma:auxiliaryestimate}, we have
\begin{align*}
\left \| (PU_i)^3-U_i^3 \right\|^{4/3}_{4/3}&\leq c_1 \int_\Omega \left| U_i^2\Ob(\delta_i)+\Ob(\delta_i^3)\right|^{4/3}\leq \Ob(\delta_i^{4/3}) \int_\Omega U_i^{8/3} +\Ob(\delta_i^4)=\Ob(\delta_i^{8/3}),
\end{align*}
and thus
\[
\left\| (PU_i)^3-U_i^3 \right\|_{4/3}=\textrm{O}(\delta_i^2), \qquad \text{ as } \delta_i\to 0.
\]
Moreover,
\[
\|\lambda_i PU_i\|_{4/3}^{4/3} =\lambda_i^{4/3} \int_\Omega |PU_i|^{4/3}=\lambda_i^{4/3} \int_\Omega U_i^{4/3}+\lambda_i^{4/3} \Ob(\delta_i^{4/3})=\lambda_i^{4/3} \Ob(\delta_i^{4/3}),
\]
so that 
\[
\|\lambda_i PU_i\|_{4/3}=\lambda_i \Ob(\delta_i).
\]
Finally, for $j\neq i$,
\[
\begin{split}
\|(PU_i) (PU_j)^2\|_{4/3}^{4/3} =& \int_\Omega (PU_i)^{4/3}(PU_j)^{8/3}\\
							=&\int_\Omega U_i^{4/3} U_j^{8/3}+   \int_\Omega \left[ (PU_i)^{4/3}-U_i^{4/3}\right] \left[ (PU_j)^{8/3}-U_j^{8/3}\right]  \\
							&+\int_\Omega U_i^{4/3} \left[ (PU_j)^{8/3}-(U_j)^{8/3}\right] +\int_\Omega \left[ (PU_i)^{4/3}-U_i^{4/3}\right]U_j^{8/3}\\
						\leq & \int_\Omega U_i^{4/3} U_j^{8/3} +\int_\Omega  \left| (U_i)^{1/3}\Ob(\delta_i)+\Ob(\delta_i^{4/3})\right| \left| U_j^{5/3} \Ob(\delta_j) +\Ob(\delta_j^{8/3}) \right|\\
							&+\int_\Omega U_i^{4/3} \left| U_j^{5/3}\Ob(\delta_j)+\Ob(\delta_j^{8/3}) \right| +\int_\Omega \left| U_i^{1/3}\Ob(\delta_i)+\Ob(\delta_i^{4/3})\right| U_j^{8/3}\\
							=& \Ob(\delta_i^{4/3}\delta_j^{4/3}),
\end{split}
\]
so that $\|\beta (PU_i) (PU_j)^2\|_{4/3}=|\beta| \Ob(\delta_i\delta_j)$, and
\begin{equation}\label{eq:R_deltaxi}
\left \|\widetilde R_{\deltab,\xib} \right\|_{4/3} =\sum_{i=1}^m (\Ob(\delta_i^2)+\lambda_i \Ob(\delta_i))+\mathop{\sum_{i,j=1}^{m}}_{i\neq j} |\beta| \Ob(\delta_i \delta_j).
\end{equation}

\medbreak

So, combining \eqref{eq:N_deltaxi} and \eqref{eq:R_deltaxi}, we have
\[
\left\| T_{\deltab,\xib}(\phib) \right\|_{H^1_0}\leq C\left( (1+|\beta|)(\|\phib\|_{H^1_0}^2 +\|\phib\|_{H^1_0}^3)+\sum_{i}(\delta_i^2+\lambda_i \delta_i)\right)
\]
and hence, for sufficiently small $\deltab>0$, there exists $c>0$ (depending only on $\beta$) such that, for $0<\lambda_i\leq 1$,
\[
T_{\deltab,\xib}\left(\left\{ \phib\in K_{\deltab,\xib}^\perp:\ \|\phib\|_{H^1_0}\leq c \left(\delta_i^2+\lambda_i\delta_i\right)  \right\}\right)\subseteq \left\{ \phib\in K_{\deltab,\xib}^\perp:\ \|\phib\|_{H^1_0}\leq c \left(\delta_i^2+\lambda_i\delta_i\right)  \right\}.
\]
Using the same arguments as before, we can also check easily that $T_{\deltab,\xib}$ is a contraction, i.e., there exists $L\in (0,1)$ independent of $\deltab$ such that
\[
\|T_{\deltab,\xib}(\psib_1)-T_{\deltab,\xib}(\psib_2)\|_{H^1_0}\leq L \| \psib_1-\psib_2\|_{H^1_0},\quad \forall \psib_1,\psib_2\in \left\{ \phib\in K_{\deltab,\xib}^\perp:\ \|\phib\|_{H^1_0}\leq c \left(\delta_i^2+\lambda_i\delta_i\right)  \right\}.
\]
In conclusion, the proposition follows from the Banach Fixed Point Theorem.
\end{proof}


\begin{lemma}\label{lemma:differentiability}
Let $-\infty<\beta\leq \bar \beta$. For every $\eta>0$ there exist $\eps_0>0$ such that, for $0<\lambda_i<\eps_0$, the map
\[
(0,\eps_0)^m \times X_\eta \to K_{\deltab,\xib}^\perp;\qquad    (\deltab,\xib)\mapsto \phib^{\deltab,\xib}
\]
is continuously differentiable.
\end{lemma}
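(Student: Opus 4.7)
The plan is to deduce differentiability via the Implicit Function Theorem applied to a fixed-space reformulation of the constrained fixed-point problem \eqref{eq:equation_fixedpoint}. The principal technical difficulty is that the constraint space $K^\perp_{\deltab,\xib}$ itself depends on the parameters, so the IFT cannot be applied directly in that moving space. I would circumvent this by introducing Lagrange multipliers $\mathbf{c}=(c_i^k)_{1\le i\le m,\,0\le k\le 4}\in\R^{5m}$ and working in the fixed product space $H^1_0(\Omega,\R^m)\times\R^{5m}$; the statement of the lemma is then read as asserting smoothness of $(\deltab,\xib)\mapsto\phib^{\deltab,\xib}$ valued in the ambient space $H^1_0(\Omega,\R^m)$.

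Concretely, I would define $F=((F^i)_i,(G^j_i)_{i,j})\colon(0,\eps_0)^m\times X_\eta\times H^1_0(\Omega,\R^m)\times\R^{5m}\to H^1_0(\Omega,\R^m)\times\R^{5m}$ by setting $G^j_i(\deltab,\xib,\phib,\mathbf{c})=\langle\phi_i,P\psi^j_{\delta_i,\xi_i}\rangle_{H^1_0}$, and
\begin{multline*}
F^i(\deltab,\xib,\phib,\mathbf{c}) = \phi_i+\mu_i^{-1/2}PU_{\delta_i,\xi_i}-i^*\Big[\mu_i f(\mu_i^{-1/2}PU_{\delta_i,\xi_i}+\phi_i)+\lambda_i(\mu_i^{-1/2}PU_{\delta_i,\xi_i}+\phi_i)\\
+\beta(\mu_i^{-1/2}PU_{\delta_i,\xi_i}+\phi_i)\sum_{j\neq i}(\mu_j^{-1/2}PU_{\delta_j,\xi_j}+\phi_j)^2\Big]-\sum_{k=0}^{4}c_i^k\,P\psi^k_{\delta_i,\xi_i}.
\end{multline*}
By construction, zeros of $F$ correspond exactly to pairs with $\phib\in K^\perp_{\deltab,\xib}$ solving \eqref{eq:equation_fixedpoint}, with the $c_i^k$ playing the role of Lagrange multipliers associated with the projections $\Pi_i^\perp$; Proposition \ref{prop:estimate_error} supplies the base point $(\phib^{\deltab,\xib},\mathbf{c}^{\deltab,\xib})$ at which to linearize.

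The $C^1$ regularity of $F$ is routine. The dependence on $\mathbf{c}$ is linear. The dependence on $(\deltab,\xib)$ is smooth because the maps $(\delta,\xi)\mapsto U_{\delta,\xi},\,PU_{\delta,\xi},\,P\psi^k_{\delta,\xi}$ are $C^\infty$ as maps into the relevant $L^q$ and $H^1_0$ norms, and $i^*\colon L^{4/3}(\Omega)\to H^1_0(\Omega)$ is linear and bounded by \eqref{eq:i*continuous}. Differentiability in $\phib$ follows from the fact that $f'(s)=3(s^+)^2$ is locally Lipschitz, so the Nemytskii operator $\phi\mapsto f(\mu_i^{-1/2}PU_{\delta_i,\xi_i}+\phi)$ is $C^1$ from $H^1_0$ into $L^{4/3}$ via the Sobolev embedding $H^1_0(\Omega)\hookrightarrow L^4(\Omega)$; the cubic cross terms are polynomial in $\phib$ and are handled identically, in the same spirit as the estimates underlying Proposition \ref{prop:estimate_error}.

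The crucial, and hardest, step is invertibility of $\partial_{(\phib,\mathbf{c})}F$ at $(\phib^{\deltab,\xib},\mathbf{c}^{\deltab,\xib})$. Given a right-hand side $(g,\mathbf{d})\in H^1_0(\Omega,\R^m)\times\R^{5m}$, I would decompose $\hat\phib=\hat\phib^\parallel+\hat\phib^\perp$ along $K_{\deltab,\xib}\oplus K^\perp_{\deltab,\xib}$; the orthogonality equations $\langle\hat\phi_i,P\psi^j_{\delta_i,\xi_i}\rangle_{H^1_0}=d_i^j$ determine $\hat\phib^\parallel$ uniquely in terms of $\mathbf{d}$, since the Gram matrix $(\langle P\psi^k_{\delta_i,\xi_i},P\psi^j_{\delta_i,\xi_i}\rangle_{H^1_0})_{j,k}$ equals $\mathrm{diag}(\sigma_{kk})+\ob(1)$ with $\sigma_{kk}>0$, by the computation in \eqref{eq:w_{in}^2=}. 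The $K_{\deltab,\xib}$-projection of the linearization of $F^i$ then produces an analogous invertible linear system in $\hat{\mathbf{c}}$ (the only place where the Lagrange multipliers enter), while the $K^\perp_{\deltab,\xib}$-projection reduces to $L_{\deltab,\xib}(\hat\phib^\perp)+R(\hat\phib^\perp)=\text{known}$, where $R$ is a remainder of operator norm $\Ob(\|\phib^{\deltab,\xib}\|_{H^1_0})=\Ob\bigl(\sum_i(\lambda_i\delta_i+\delta_i^2)\bigr)\to 0$ arising from linearizing the nonlinear terms at $\phib^{\deltab,\xib}$. Lemma \ref{lemma:L_invertible} combined with a Neumann-series perturbation argument, after possibly shrinking $\eps_0$, then yields invertibility with a uniform bound. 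The Implicit Function Theorem finally produces a $C^1$ map $(\deltab,\xib)\mapsto(\phib^{\deltab,\xib},\mathbf{c}^{\deltab,\xib})$, from which the stated conclusion follows by projecting out the Lagrange-multiplier component.
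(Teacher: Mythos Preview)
Your approach is correct and in fact more careful than the paper's own argument. The paper applies the Implicit Function Theorem directly to the map $T(\deltab,\xib,\phib)=L_{\deltab,\xib}(\phib)-R_{\deltab,\xib}-N_{\deltab,\xib}(\phib)$, viewed as going from $K_{\deltab,\xib}^\perp$ to itself, and checks that $D_\phib T(\deltab,\xib,\phib^{\deltab,\xib})=L_{\deltab,\xib}-N'_{\deltab,\xib}(\phib^{\deltab,\xib})$ is injective by precisely the perturbation argument you sketch: Lemma~\ref{lemma:L_invertible} gives $\|\psib\|_{H^1_0}\le c\|L_{\deltab,\xib}\psib\|_{H^1_0}$, while the bound $\|N'_{\deltab,\xib}(\phib^{\deltab,\xib})[\psib]\|_{H^1_0}\le(1+|\beta|)\,\ob_{\deltab}(1)\,\|\psib\|_{H^1_0}$ (your remainder $R$) follows from $\|\phib^{\deltab,\xib}\|_{H^1_0}\to0$. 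The paper, however, does not address the issue you explicitly flag---that both the domain and target $K_{\deltab,\xib}^\perp$ vary with the parameters---so its invocation of the IFT is, strictly speaking, formal. Your Lagrange-multiplier reformulation in the fixed space $H^1_0(\Omega,\R^m)\times\R^{5m}$ is the standard rigorous remedy; it costs some extra bookkeeping (the Gram-matrix inversion for $\hat\phib^\parallel$ and the separate solution for $\hat{\mathbf c}$) but delivers an honest $C^1$ conclusion in a fixed Banach space. Both routes rest on the same two ingredients: the uniform invertibility of $L_{\deltab,\xib}$ and the smallness of the linearized nonlinearity at the fixed point.
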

\begin{proof} Fix $\beta<0$ and $\eta$ small, and take $\eps_0$ as in Proposition \ref{prop:estimate_error}. We apply the implicit function theorem to the map
\[
T: (0,\eps_0)^m \times X_\eta \times K_{\deltab,\xib}^\perp \to K_{\deltab,\xib}^\perp,\qquad (\deltab,\xib,\phib)\mapsto T(\deltab,\xib,\phib):=L_{\deltab,\xib}(\phib)-R_{\deltab,\xib}-N_{\deltab,\xib}(\phib).
\]
Take $\phib^{\deltab,\xib}$, so that $T(\deltab,\xib,\phib^{\deltab,\xib})=0$. Then
\[
D_{\phib}T(\deltab,\xib,\phib^{\deltab,\xib})[\psib]=L_{\deltab,\xib}(\psib)-N_{\deltab,\xib}'(\phi^{\deltab,\xib})[\psib],\qquad \forall \psib\in K_{\deltab,\xib}^\perp.
\]
We will show that this map is injective. Let $\psib\in K_{\deltab,\xib}^\perp$ be such that
\[
L_{\deltab,\xib}(\psib)=N_{\deltab,\xib}'(\phib^{\deltab,\xib})[\psib].
\]
On the one hand, recall from Lemma \ref{lemma:L_invertible} that
\begin{equation}\label{eq:comparing1}
\|\psib\|_{H^1_0}\leq c \|L_{\deltab,\xib}(\psib)\|_{H^1_0}
\end{equation}
for some $c>0$. On the other hand, recalling the expression of $N_{\deltab,\xib}(\phib)$ from \eqref{eq:N}, we have
\begin{multline*}
N_{\deltab,\xib}'(\phib^{\deltab,\xib})[\psib]=\Pi_i^\perp\circ i^* \left[ \mu_i \psi_i \left( f'(\mu_i^{-1/2}PU_i+\phib_i^{\deltab,\xib})-f'(\mu_i^{-1/2}PU_i)  \right) + 2\beta \mu_i^{-1/2} (PU_i) \sum_{j\neq i} \phi_j^{\deltab,\xib}\psi_j  \right.\\
			\left. +\beta \psi_i \sum_{j\neq i}  (\phi_j^{\deltab,\xib})^2+2\beta  \phi_i^{\deltab,\xib}\sum_{j\neq i}  \phi_j^{\deltab,\xib}\psi_j +2\beta \psi_i \sum_{j\neq i}\mu_j^{-1/2} (PU_j)  \phi_j^{\deltab,\xib} + 2\beta  \phi_i^{\deltab,\xib} \sum_{j\neq i} \mu_j^{-1/2}(PU_j)\psi_j	\right].
\end{multline*}
Thus there exist $C_1,C_2>0$ such that
\begin{multline}\label{eq:comparing2}
\|N_{\deltab,\xib}'(\phib^{\deltab,\xib})[\psib]\|_{H^1_0} \leq  C_1\sum_{j\neq i} \left( \|(PU_i)\phi_i^{\deltab,\xib}\psi_i \|_{4/3} +\|(\phi_i^{\deltab,\xib})^2\psi_i\|_{4/3} +|\beta| \|(PU_i) \phi_j^{\deltab,\xib} \psi_j\|_{4/3} +\right. \\
			 +\left.  |\beta| \|\psi_i (\phi_j^{\deltab,\xib})^2\|_{4/3} +|\beta| \| \phi_i^{\deltab,\xib} \phi_j^{\deltab,\xib} \psi_j \|_{4/3} +|\beta| \|(PU_j) (\psi_i \phi_j^{\deltab,\xib}+\phi_i^{\deltab,\xib}\psi_j) \|_{4/3}\right)\\
			\leq  C_2 \left(\|\phi^{\deltab,\xib}\|_{H^1_0} \|\psib\|_{H^1_0} + \|\phi^{\deltab,\xib}\|_{H^1_0}^2 \|\psib\|_{H^1_0} +|\beta | \|\phib^{\deltab,\xib}\|_{H^1_0} \|\psib\|_{H^1_0}  \right.\\
			+\left. |\beta| \|\phi^{\deltab,\xib}\|_{H^1_0}^2 \|\psib\|_{H^1_0} +|\beta| \|\phib^{\deltab,\xib}\|_{H^1_0} \|\psib\|_{H^1_0}\right)\\
			\leq  (1+|\beta|)\ob_{\deltab}(1) \|\psib\|_{H^1_0}
\end{multline}
as $\deltab\to 0$, since $\|\phi^{\deltab,\xib}\|_{H^1_0}\to 0 $ as $\deltab\to 0$ (see Proposition \ref{prop:estimate_error}). Comparing \eqref{eq:comparing1} and \eqref{eq:comparing2}, we conclude that $\psib=0$ for sufficiently small $\eps_0>0$, which proves the lemma. \qedhere
\end{proof}

Given $\eta>0$, take $\eps_0>0$ as before, and define the \emph{reduced} $C^1$--functional $\widetilde E:(0,\eps_0)^m\times X_\eta\to \R$ by
\[
\widetilde E(\deltab,\xib)=E(PU_{\delta_1,\xi_1}+\phi_1^{\deltab,\xib},\ldots, PU_{\delta_m,\xi_m}+\phi_m^{\deltab,\xib}).
\]

\begin{proposition}\label{prop:Reduction}
Under the previous notations, we have that 
\[
(\mu_1^{-1/2}PU_{\delta_1,\xi_1}+\phi_1^{\deltab,\xib},\ldots, \mu_m^{-1/2}PU_{\delta_m,\xi_m}+\phi_m^{\deltab,\xib}) \text{ is a critical point of $E$}
\] 
 if, and only if, 
 \[
 (\deltab,\xib)\text{  is a critical point of $\widetilde E$. }
 \]
\end{proposition}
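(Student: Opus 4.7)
The plan is to use the characterization of $\phib^{\deltab,\xib}$ as the unique solution of \eqref{eq:system_i*1Kperp}, which is exactly the projection onto $K^\perp_{\deltab,\xib}$ of the equation $\nabla E(u^{\deltab,\xib})=0$, where I set
\[
u^{\deltab,\xib}=\bigl(\mu_1^{-1/2}PU_{\delta_1,\xi_1}+\phi_1^{\deltab,\xib},\ldots,\mu_m^{-1/2}PU_{\delta_m,\xi_m}+\phi_m^{\deltab,\xib}\bigr).
\]
Consequently $\nabla_i E(u^{\deltab,\xib})\in K_{\delta_i,\xi_i}$ for every $i$, so one may expand
\[
\nabla_i E(u^{\deltab,\xib})=\sum_{j=0}^4 c_{ij}^{\deltab,\xib}\, P\psi^j_{\delta_i,\xi_i},
\]
and $u^{\deltab,\xib}$ is a critical point of $E$ if and only if all the $c_{ij}^{\deltab,\xib}$ vanish. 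The strategy is to express $\nabla_{(\deltab,\xib)}\widetilde E(\deltab,\xib)=0$ as a linear system in the unknowns $c_{ij}^{\deltab,\xib}$ and to verify that its coefficient matrix is invertible for small parameters.

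One implication is immediate: if $\nabla E(u^{\deltab,\xib})=0$, then by Lemma \ref{lemma:differentiability} and the chain rule, $\nabla_{(\deltab,\xib)}\widetilde E(\deltab,\xib)=0$. For the converse, assume $(\deltab,\xib)$ is critical for $\widetilde E$; I would expand, by the chain rule,
\[
\partial_{\delta_i}\widetilde E=\bigl\langle \nabla_i E(u^{\deltab,\xib}),\,\partial_{\delta_i}(\mu_i^{-1/2}PU_{\delta_i,\xi_i})\bigr\rangle_{H^1_0}+\sum_{k=1}^m\bigl\langle \nabla_k E(u^{\deltab,\xib}),\,\partial_{\delta_i}\phi_k^{\deltab,\xib}\bigr\rangle_{H^1_0},
\]
and analogously for $\partial_{\xi_i^r}\widetilde E$, $r=1,\ldots,4$. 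Since $\nabla_k E(u^{\deltab,\xib})\in K_{\delta_k,\xi_k}$, the pairings in the sum equal those with $\Pi_k\partial_{\delta_i}\phi_k^{\deltab,\xib}$; and differentiating the orthogonality $\langle \phi_k^{\deltab,\xib},P\psi^j_{\delta_k,\xi_k}\rangle_{H^1_0}=0$ gives
\[
\langle \partial_{\delta_i}\phi_k^{\deltab,\xib},P\psi^j_{\delta_k,\xi_k}\rangle_{H^1_0}=-\langle \phi_k^{\deltab,\xib},\partial_{\delta_i}P\psi^j_{\delta_k,\xi_k}\rangle_{H^1_0},
\]
which vanishes for $k\neq i$ (because $P\psi^j_{\delta_k,\xi_k}$ is independent of $\delta_i$) and is of order $\Ob(\|\phi_i^{\deltab,\xib}\|_{H^1_0}\,\|\partial_{\delta_i}P\psi^j_{\delta_i,\xi_i}\|_{H^1_0})$ for $k=i$. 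Using moreover that $\partial_{\delta_i}(\mu_i^{-1/2}PU_{\delta_i,\xi_i})=\mu_i^{-1/2}\delta_i^{-1}P\psi^0_{\delta_i,\xi_i}$ and similarly for $\partial_{\xi_i^r}$, the system $\nabla_{(\deltab,\xib)}\widetilde E=0$ decouples into $m$ independent $5\times 5$ systems $\mathcal{M}_i\,(c_{i0}^{\deltab,\xib},\ldots,c_{i4}^{\deltab,\xib})^{\mathrm{T}}=0$.

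The core step is to show that each $\mathcal{M}_i$ is invertible for $\deltab,\lambdab$ small and $\xib\in X_\eta$. After multiplying its $r$-th row by $\delta_i$, the $(r,l)$-entry has leading order $\mu_i^{-1/2}\langle P\psi^r_{\delta_i,\xi_i},P\psi^l_{\delta_i,\xi_i}\rangle_{H^1_0}$, and the computation carried out in Step 1 of the proof of Lemma \ref{lemma:L_invertible} (together with Lemma \ref{lemma:expansion_of_PU}) shows that this Gram matrix converges to $3\,\mathrm{diag}(\sigma_{00},\sigma_{11},\sigma_{22},\sigma_{33},\sigma_{44})$, which is nonsingular. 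The remaining contribution, coming from $\Pi_i\partial_{(\delta_i,\xi_i)}\phi_i^{\deltab,\xib}$, is controlled by $\|\phi_i^{\deltab,\xib}\|_{H^1_0}/\delta_i$, which is $\ob(1)$ thanks to the bound $\|\phi_i^{\deltab,\xib}\|_{H^1_0}=\Ob(\lambda_i\delta_i+\delta_i^2)$ of Proposition \ref{prop:estimate_error}. Hence $\mathcal{M}_i$ is invertible, forcing $c_{ij}^{\deltab,\xib}=0$ for every $i,j$, i.e., $\nabla E(u^{\deltab,\xib})=0$.

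The main obstacle will be the $(\deltab,\xib)$-derivatives of $\phib^{\deltab,\xib}$, which do not a priori lie in $K^\perp_{\deltab,\xib}$ since that space itself depends on $(\deltab,\xib)$; the saving feature is that the obstruction is quantitatively controlled by $\|\phi_i^{\deltab,\xib}\|_{H^1_0}$ after differentiating the orthogonality conditions, and keeping careful track of the $\delta_i^{-1}$ scaling is essential so that the leading Gram contribution dominates these corrections as $\deltab,\lambdab\to 0$.
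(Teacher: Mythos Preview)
Your argument is correct and follows the same chain--rule/linear--system scheme as the paper. The paper's proof is terser: it writes
\[
\partial_{\delta_i}\widetilde E(\deltab,\xib)=E'(u^{\deltab,\xib})\bigl[0,\ldots,\mu_i^{-1/2}P\partial_{\delta_i}U_{\delta_i,\xi_i},\ldots,0\bigr]
\]
(and similarly for $\partial_{(\xi_i)_k}$), i.e.\ it drops the contribution of $\partial\phib^{\deltab,\xib}$ entirely and then concludes by noting that the vectors $P\partial_{\delta_i}U_{\delta_i,\xi_i},\,P\partial_{(\xi_i)_k}U_{\delta_i,\xi_i}$ span $K_{\delta_i,\xi_i}$. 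You correctly observe that this step is not literally exact, since $\partial_{\delta_i}\phi_i^{\deltab,\xib}\notin K_i^\perp$ in general; your computation of the correction via differentiating the orthogonality relations, together with the estimate $\|\phi_i^{\deltab,\xib}\|_{H^1_0}=\Ob(\lambda_i\delta_i+\delta_i^2)$ and the scaling $\|\partial_{\delta_i}P\psi^j_{\delta_i,\xi_i}\|_{H^1_0}=\Ob(\delta_i^{-1})$, shows that after multiplying the $r$-th equation by $\delta_i$ the coefficient matrix is $\mu_i^{-1/2}\,\mathrm{diag}(3\sigma_{00},\ldots,3\sigma_{44})+\ob(1)$, hence invertible. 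So your proof is essentially the paper's argument made rigorous at the one point where the paper's shortcut tacitly assumes the perturbation coming from $\Pi_i\partial\phi_i^{\deltab,\xib}$ is negligible.
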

\begin{proof}
From the definition of $\phib^{\deltab,\xib}$, we know that
\[
E'(\mu_1^{-1/2}PU_{\delta_1,\xi_1}+\phi_1^{\deltab,\xib},\ldots,\mu_m^{-1/2}PU_{\delta_m,\xi_m}+\phi_m^{\deltab,\xib})(\varphi_1,\ldots, \varphi_m)=0\quad \forall (\varphi_1,\ldots, \varphi_m)\in K_1^\perp\times\ldots\times K_m^\perp.
\]
To simplify notations, we let $V_i^{\deltab,\xib}:=\mu_i^{-1/2}P U_{\delta_i,\xi_i}+\phi_i^{\deltab,\xib}$. For each $i=1,\ldots,4$, write $\xi_i=((\xi_i)_1,\dots,(\xi_i)_4)$. The claim follows from the fact that
\begin{align*}
\partial_{\delta_1} \widetilde E (\deltab,\xib)&=E'(V_1^{\deltab,\xib},\ldots, V_m^{\deltab,\xib})[ \mu_1^{-1/2}P \partial_{\delta_1} U_{\delta_1,\xi_1}+\partial_{\delta_1}  \phi^{\deltab,\xib}_1,\partial_{\delta_2}\phi_2,\ldots, \partial_{\delta_1}\phi^{\deltab,\xib}_m]   \\
			&=E'(V_1^{\deltab,\xib},\ldots, V_m^{\deltab,\xib})[\mu_1^{-1/2}P \partial_{\delta_1} U_{\delta_1,\xi_1},0\ldots, 0]\\
			(\ldots)\\
\partial_{\delta_m} \widetilde E (\deltab,\xib)&=E'(V_1^{\deltab,\xib},\ldots, V_m^{\deltab,\xib})[\partial_{\delta_m}  \phi_1^{\deltab,\xib},\partial_{\delta_m}\phi_2^{\deltab,\xib},\ldots, \mu_m^{-1/2}P \partial_{\delta_m} U_{\delta_m,\xi_m}+\partial_{\delta_m}\phi_m^{\deltab,\xib}]   \\
			&=E'(V_1^{\deltab,\xib},\ldots, V_m^{\deltab,\xib})[0,\ldots, 0, \mu_m^{-1/2} P \partial_{\delta_m} U_{\delta_m,\xi_m}]\\			   
\end{align*}
and similarly for $\partial_{({\xi_i})_j} \widetilde E(\deltab,\xib)$, $i=1,\ldots, m$, $j=1,\ldots, 4$. Thus the conclusion follows by recalling that, for each $i=1,\ldots, m$, the space $K_{\delta_i,\xi_i}$ is spanned by $\{P\partial_{\delta_i} U_{\delta_i,\xi_i}, P\partial_{({\xi_i})_1}U_{\delta_i,\xi_i}, \ldots  P\partial_{({\xi_i})_4}U_{\delta_i,\xi_i}\}$.
\end{proof}

\section{Expansion of the Reduced Energy}\label{sec:Energy_expansion}\label{sec:EnergyExpansion}

In the following, we take a small $\eta>0$, and $\xib\in X_\eta$, $\deltab\in (0,\eps_0)^m$, and denote $\phib:=\phib^{\deltab,\xib}$ and $U_i:= U_{\delta_i,\xi_i}$. A direct computation shows that
\[
\widetilde E(\deltab,\xib) = E(\mu_1^{-1/2}PU_1,\ldots, \mu_m^{-1/2}PU_m)+R(\deltab,\xib)
\]
where
\begin{equation}
\begin{split}\label{eq:R(delta,xi)}
R(\deltab,\xib)=&\sum_{i=1}^m \frac{1}{2} \int_\Omega (2\mu_i^{-1/2} U_i^3\phi_i+|\nabla \phi_i|^2)
						-\sum_{i=1}^m \mu_i\int_\Omega (F(\mu_i^{-1/2} PU_i+\phi_i)-F(\mu_i^{-1/2}PU_i))\\
& -\sum_{i=1}^m \frac{\lambda_i}{2} \int_\Omega (2\mu_i^{-1/2}(PU_i)\phi_i+\phi_i^2) -\frac{\beta}{2}\sum_{i<j} \int_\Omega \left(2\mu_i^{-1}\mu_j^{-1/2}(PU_i)^2(PU_j)\phi_j + \mu_i^{-1}(PU_i)^2\phi_j^2 \right.\\
&+2\mu_i^{-1/2}\mu_j^{-1}(PU_i)\phi_i (PU_j)^2 + 4 \mu_i^{-1/2}\mu_j^{-1/2}(PU_i)\phi_i (PU_j)\phi_j + 2\mu_i^{-1/2}(PU_i)\phi_i \phi_j^2 \\
&\left. + \mu_j^{-1}\phi_i^2 (PU_j)^2+2\mu_j^{-1/2}\phi_i^2(PU_j)\phi_j+\phi_i^2\phi_j^2 \right)
\end{split}
\end{equation}
We have $E(\mu_1^{-1/2}PU_1,\ldots, \mu_m^{-1/2}PU_m)=\sum_{i=1}^m (A_i-B_i-C_i)-\sum_{i<j} D_{ij}$, where
\[
A_i=\frac{1}{2}\int_\Omega \mu_i^{-1}|\nabla PU_i|^2,\quad B_i=\mu_i \int_\Omega F(\mu_i^{-1/2}PU_i),\quad C_i=\frac{\lambda_i}{2}\int_\Omega \mu_i^{-1}(PU_i)^2
\]
and
\[
D_{ij}=\frac{\beta}{2}\int_\Omega \mu_i^{-1}\mu_j^{-1}(PU_i)^2(PU_j)^2.
\]
We also denote
\[
A=\int_{\R^4} U_{1,0}^3,\qquad B=\int_{\R^4} U_{1,0}^4.
\]
Integrating by parts, using the equation $-\Delta PU_i=U_i^3$, the expansion of Lemma \ref{lemma:expansion_of_PU}, and the integral estimates of Lemma \ref{lemma:integral_estimates}, we have that
\[
\begin{split}
A_i&=\frac{1}{2}\int_\Omega \mu_i^{-1}U_i^3(PU_i)=\frac{1}{2}\int_\Omega \mu_i^{-1}U_i^3 (U_i-A \delta_i H(x,\xi_i)+\textrm{o}(\delta_i))\, dx\\
	&=\frac{1}{2}\int_\Omega \mu_i^{-1} U_i^4 -\frac{A \delta_i}{2}\mu_i^{-1}\int_\Omega U_i^3 H(x,\xi_i)\, dx+\textrm{o}(\delta_i)\int_\Omega U_i^3\\
	&=\frac{1}{2}c_4^4 \mu_i^{-1}\int_\Omega \frac{ \delta_i^4}{(\delta_i^2+|x-\xi_i|^2)^4}-\frac{A \delta_i}{2}c_4^3\mu_i^{-1}\int_\Omega \frac{\delta_i^3}{(\delta_i^2+|x-\xi_i|^2)^3}H(x,\xi_i)+\textrm{o}(\delta_i^2)\\
	&= \frac{1}{2}c_4^2\mu_i^{-1}\int_\frac{\Omega-\xi_i}{\delta_i} \frac{1}{(1+|y|^2)^4}-\frac{A \delta_i^2}{2}c_4^3\mu_i^{-1}\int_{\frac{\Omega-\xi_i}{\delta_i}} \frac{1}{(1+|y|^2)^3}H(\xi_i+\delta_i y,\xi_i) +\ob(\delta_i^2)\\
	&=\frac{c_4^4}{2}\mu_i^{-1}B-\frac{c_4^3}{2}\mu_i^{-1} A^2 \tau(\xi_i)\delta_i^2 +\textrm{o}(\delta_i^2)\qquad \text{ as } \delta_i \to 0,
\end{split}
\]
recalling that $\tau$ denotes the Robin function, i.e., $\tau(x)=H(x,x)$.
As for $B_i$, we have
\[
\begin{split}
B_i&= \frac{1}{4}\int_\Omega \mu_i^{-1} (PU_i)^4=\frac{1}{4}\int_\Omega \mu_i^{-1} (U_i-\delta_i A H(x,\xi_i)+\textrm{o}(\delta_i) )^4 \\
	&= \frac{1}{4} \mu_i^{-1} \int_\Omega (U_i-\delta_i A H(x,\xi_i))^4 +\ob(\delta_i^2)\\
	&= \frac{1}{4}\mu_i^{-1} \int_\Omega U_i^4 -\mu_i^{-1} \delta_i A \int_\Omega U_i^3 H(x,\xi_i)+\ob(\delta_i^2)\\
	&=\frac{c_4^4}{4}\mu_i^{-1} \int_\Omega \frac{\delta_i^4}{(\delta_i^2+|x-\xi_i|^2)^4} - c_4^3  \mu_i^{-1} A \delta_i  \int_\Omega \frac{\delta_i^3}{(\delta_i^2+|x-\xi_i|^2)^3}H(x,\xi_i)+\ob(\delta_i^2)\\
	&=\frac{c_4^4}{4}\mu_i^{-1} \int_{\frac{\Omega-\xi_i}{\delta_i}} \frac{1}{(1+|y|^2)^4}-c_4^3 \mu_i^{-1}A  \delta_i^2\int_{\frac{\Omega-\xi_i}{\delta_i}} \frac{1}{(1+|y|^2)^3}H(\xi_i+\delta_i y,\xi_i) + \ob(\delta_i^2)\\
	&= \frac{c_4^4}{4}\mu_i^{-1} B-c_4^3 \mu_i^{-1}A^2 \tau(\xi_i) \delta_i^2 +\ob(\delta_i^2)\qquad \text{ as } \delta_i\to 0.
\end{split}
\]
Moreover,
\[
\begin{split}
C_i&=\frac{\lambda_i}{2}\int_\Omega \mu_{i}^{-1} (PU_i)^2=\frac{\lambda_i \mu_i^{-1}}{2}\int_\Omega (U_i-A \delta_i H(x,\xi_i) +\ob(\delta_i))^2\\
	&= \frac{\lambda_i \mu_i^{-1}}{2}\int_\Omega (U_i-A \delta_i H(x,\xi_i))^2+\lambda_i\Ob(\delta_i^2)\\
	&= \frac{\lambda_i \mu_i^{-1}}{2}\int_\Omega U_i^2 - \lambda_i \mu_i^{-1} \delta_i A \int_\Omega U_i H(x,\xi_i) + \lambda_i \Ob(\delta_i^2)\\
	&=\frac{c_4^2}{2}\mu_i^{-1}\omega_3 \lambda_i \delta_i^2 |\ln \delta_i| + \lambda_i \textrm{O}(\delta_i^2) \qquad \text{ as } \delta_i \to 0.
\end{split}
\]
Finally, for some constant $C>0$,
\[
\begin{split}
D_{ij} &=\frac{\beta}{2}\mu_i^{-1}\mu_j^{-1}\int_\Omega (PU_i)^2(PU_j)^2\\
	&=\frac{\beta}{2}\mu_i^{-1}\mu_j^{-1} \int_\Omega (U_i-\delta_i A H(x,\xi_i)+\ob(\delta_i))^2(U_j-\delta_j A H(x,\xi_j)+\ob(\delta_j))^2\\
	&= \frac{\beta}{2}\mu_i^{-1}\mu_j^{-1} \int_\Omega (U_i^2+\Ob(\delta_i) U_i +\Ob(\delta_i^2)) (U_j^2+\Ob(\delta_j) U_j +\Ob(\delta_j^2))\\
	&=\frac{\beta}{2}\mu_i^{-1}\mu_j^{-1}\int_\Omega U_i^2 U_j^2 + \beta \textrm{O}(\delta_i^2\delta_j^2|\ln \delta_i\delta_j|).
\end{split}
\]
Moroever, since
\[
\begin{split}
\int_\Omega U_i^2 U_j^2=&\int_{B_{\eta/2}(\xi_i)} U_i^2 U_j^2+\int_{B_{\eta/2}(\xi_j)} U_i^2 U_j^2 + \Ob(\delta_i^2 \delta_j^2)\\
					=&c_4^4 \int_{B_{\eta/(2\delta_i)}(0)} \frac{\delta_i^2 \delta_j^2}{(1+|y|^2)^2(\delta_j^2+|\delta_i y+\xi_i-\xi_j|^2)^2}\\
					&+c_4^4 \int_{B_{\eta/(2\delta_j)}(0)} \frac{\delta_i^2 \delta_j^2}{(1+|y|^2)^2(\delta_i^2+|\delta_j y+\xi_j-\xi_i|^2)^2}+\Ob(\delta_i^2\delta_j^2)\\
					\leq & c_4^4 \kappa \delta_i^2 \delta_j^2 \left(\int_0^{\eta/(2\delta_i)} \frac{t^3}{(1+t^2)^2}+\int_0^{\eta/(2\delta_j)} \frac{t^3}{(1+t^2)^2}\right)+\Ob(\delta_i^2\delta_j^2) \\
					=&\Ob(\delta_i^2 \delta_j^2 |\ln \delta_i \delta_j|)\qquad \text{ as } (\delta_i,\delta_j)\to (0,0),
\end{split}
\] we have
\[
D_{ij}=\Ob(\delta_i^2 \delta_j^2 |\ln \delta_i \delta_j|).
\]

In conclusion, we have deduced the following asymptotic expansion:
\begin{multline*}
E(\mu_1^{-1/2}PU_1,\ldots, \mu_m^{-1/2}PU_m)=\sum_{i=1}^m (A_i-B_i-C_i)-\sum_{i<j} D_{ij}\\
= \sum_{i=1}^m \frac{c_4^4}{4}\mu_i^{-1} B + \sum_{i=1}^m \left(\frac{c_4^3}{2}\mu_i^{-1} A^2 \tau(\xi_i) \delta_i^2-\frac{c_4^2}{2} \mu_i^{-1} \omega_3 \lambda_i \delta_i^2 |\ln \delta_i| \right) \\
		+\sum_{i=1}^m (\ob(\delta_i^2)+ \lambda_i \Ob(\delta_i^2))+\sum_{i<j} |\beta| \Ob(\delta_i^2 \delta_j^2|\ln \delta_i \delta_j|).
\end{multline*}
Let us now estimate $R(\deltab,\xib)$, given by expression \eqref{eq:R(delta,xi)}. We recall from Proposition \ref{prop:estimate_error} that
\[
\|\phib^{\deltab,\xib}\|_{H^1_0(\Omega)} \leq c \sum_{i=1}^m\left( \lambda_i \delta_i +\delta_i^2  \right).
\]
Since we consider $\beta<0$ fixed, we drop (as in the previous asymptotic formula) the dependence of the quantities from $\beta$. All the constants appearing from now on will depend on this parameter.
\begin{lemma}\label{lemma:remainderestimate}Give $\eta>0$ small, we have
\[
|R(\deltab,\xib)| \leq C \sum_{i=1}^m\left( \lambda_i \delta_i^3+ \delta_i^4  \right).
\]
as $\deltab\to 0$, uniformly for every $\xib\in K_{\eta}$, .
\end{lemma}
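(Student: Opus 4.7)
The plan is to Taylor-expand the remainder $R(\deltab,\xib)$ around the approximate solution $\mathbf{W}:=(\mu_1^{-1/2}PU_1,\ldots,\mu_m^{-1/2}PU_m)$, separating contributions by homogeneity in $\phib$ as $R=R^{(1)}+R^{(2)}+R^{(3)}$, where $R^{(k)}$ collects terms of degree $k$ in $\phib$ (with $R^{(3)}$ gathering every contribution of degree $\ge 3$). Concretely, one uses the pointwise identity
\[
\tfrac{1}{4}((W_i+\phi_i)^+)^4-\tfrac{1}{4}(W_i^+)^4=W_i^3\phi_i+\tfrac{3}{2}W_i^2\phi_i^2+W_i\phi_i^3+\tfrac{1}{4}\phi_i^4,
\]
valid since $W_i=\mu_i^{-1/2}PU_i>0$, together with the analogous algebraic expansion of $(W_i+\phi_i)^2(W_j+\phi_j)^2-W_i^2W_j^2$ to unfold the $\beta$-interaction piece appearing in \eqref{eq:R(delta,xi)}.

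After substituting $-\Delta PU_i=U_i^3$ (which eliminates the $\int \nabla PU_i\cdot \nabla \phi_i$ contribution) and collecting, the linear piece simplifies to
\[
R^{(1)}(\phib)=-\sum_{i=1}^m\int_\Omega \widetilde{R}^{\,i}_{\deltab,\xib}\,\phi_i,
\]
which is the duality pairing with precisely the quantity whose $L^{4/3}$-norm was already bounded in the proof of Proposition~\ref{prop:estimate_error}. H\"older's inequality and the Sobolev embedding $H^1_0(\Omega)\hookrightarrow L^4(\Omega)$ then yield $|R^{(1)}(\phib)|\le C\,\|\widetilde R_{\deltab,\xib}\|_{4/3}\|\phib\|_{H^1_0}$. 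The quadratic piece $R^{(2)}(\phib)$ is a sum of terms of type $\int|\nabla\phi_i|^2$, $\lambda_i\int\phi_i^2$, $\int(PU_i)^2\phi_i^2$, and mixed interactions $\int(PU_i)(PU_j)\phi_i\phi_j$ or $\int(PU_j)^2\phi_i^2$; each is controlled by $C\|\phib\|_{H^1_0}^2$ using the uniform bound $\|PU_i\|_4\le C$, which is a direct consequence of Lemma~\ref{lemma:integral_estimates} and $|PU_i-U_i|=\Ob(\delta_i)$ from Lemma~\ref{lemma:expansion_of_PU}. Finally $|R^{(3)}(\phib)|\le C(\|\phib\|_{H^1_0}^3+\|\phib\|_{H^1_0}^4)$ by Sobolev embedding applied to products of three or four factors in $\phib$.

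Inserting the a priori bound $\|\phib\|_{H^1_0}\le c\sum_i(\lambda_i\delta_i+\delta_i^2)$ furnished by Proposition~\ref{prop:estimate_error} into the three estimates above, and expanding the resulting quadratic in $\sum_i(\lambda_i\delta_i+\delta_i^2)$, produces the claimed bound $|R(\deltab,\xib)|\le C\sum_i(\lambda_i\delta_i^3+\delta_i^4)$. The main bookkeeping difficulty lies in the $\beta$-weighted cross-interaction terms, such as $\int(PU_i)(PU_j)^2\phi_i$ and $\int(PU_i)(PU_j)\phi_i\phi_j$: each of these requires the refined $L^{4/3}$ estimates $\|(PU_i)(PU_j)^2\|_{4/3}=\Ob(\delta_i\delta_j)$ already derived in the proof of Proposition~\ref{prop:estimate_error}, so that every contribution stays within the desired order uniformly for $\xib\in X_\eta$; all the remaining terms are of the same type as in the single-equation setting of \cite{MussoPistoiaIndiana2002} adapted to dimension four.
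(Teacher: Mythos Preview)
Your approach is essentially the paper's: both split the remainder according to the number of $\phib$--factors, invoke the Taylor estimate $|F(a+b)-F(a)-F'(a)b|\le c(a^2b^2+b^4)$, use $\|PU_i\|_4\le C$ and the $L^{4/3}$ bounds on $(PU_i)(PU_j)^2$ from Proposition~\ref{prop:estimate_error}, and conclude via $\|\phib\|_{H^1_0}\le c\sum_i(\lambda_i\delta_i+\delta_i^2)$. Your identification $R^{(1)}(\phib)=-\sum_i\int_\Omega \widetilde R^{\,i}_{\deltab,\xib}\phi_i$ is correct and is a clean way to reuse \eqref{eq:R_deltaxi}.

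Two small caveats. First, the ``pointwise identity'' you write for $\tfrac14((W_i+\phi_i)^+)^4$ is only valid where $W_i+\phi_i\ge 0$; since $\phi_i$ may change sign this need not hold. The paper bypasses this by using the inequality of Lemma~\ref{lemma:auxiliaryestimate} instead of an exact expansion, and you should do the same. Second, your last sentence is slightly optimistic: squaring $\sum_i(\lambda_i\delta_i+\delta_i^2)$ also produces $\lambda_i^2\delta_i^2$ (and mixed $\lambda_i\lambda_j\delta_i\delta_j$), which is \emph{not} controlled by $\lambda_i\delta_i^3+\delta_i^4$ when $\delta_i\ll\lambda_i$. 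The paper's own proof displays the same $\lambda_i^2\delta_i^2$ terms, so the stated bound should be read as $|R(\deltab,\xib)|\le C\sum_i(\lambda_i^2\delta_i^2+\lambda_i\delta_i^3+\delta_i^4)$; this is harmless for the application in \eqref{eq:energyexpansion_lambda} because $\lambda_i^2\delta_i^2=\ob(\delta_i^2)+\lambda_i\Ob(\delta_i^2)$, but the claim that the quadratic expansion ``produces the claimed bound'' is not literally correct.
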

\begin{proof}
We have
\begin{multline*}
\sum_{i=1}^m \frac{1}{2}\int_\Omega 2 \mu_i^{-1/2} U_i^3 \phi_i - \sum_{i=1}^m \mu_i \int_\Omega (F(\mu_i^{-1/2}PU_i+\phi_i)-F(\mu_i^{-1/2} PU_i))\\
= -\sum_{i=1}^m \mu_i \int_\Omega (F(\mu_i^{-1/2} PU_i+\phi_i)-F(\mu_i^{-1/2}PU_i)-F'(\mu_i^{-1/2} PU_i)\phi_i)+\sum_{i=1}^m \int_\Omega \mu_i^{-1/2} (U_i^3-(PU_i)^3)\phi_i
\end{multline*}
As for the first term, by using Lemma \ref{lemma:auxiliaryestimate} we have, for $\deltab$ small,
\[
\begin{split}
\left| -\sum_{i=1}^m \mu_i \int_\Omega \right.&\left.(F(\mu_i^{-1/2} PU_i+\phi_i)-F(\mu_i^{-1/2}PU_i)-F'(\mu_i^{-1/2} PU_i)\phi_i)  \right|\\
									&\leq C \sum_{i=1}^m \int_\Omega ((PU_i)^2\phi_i^2 +\phi_i^4) \leq C\sum_{i=1}^m (\| PU_i\|_4^2 \|\phi_i\|_4^2  +\|\phi_i\|_4^4)\\
									& \leq C\| \phib\|_{H^1_0}^2 \leq C' \sum_{i=1}^m\left( \lambda_i^2 \delta_i^2 +\delta_i^4  \right).
\end{split}
\]
As for the other term, using Lemmas \ref{lemma:expansion_of_PU} and \ref{lemma:auxiliaryestimate}:
\[
\begin{split}
\left|\sum_{i=1}^m \int_\Omega \mu_i^{-1/2} (U_i^3-(PU_i)^3)\phi_i \right| &\leq C\sum_{i=1}^m \int_				\Omega \left|U_i^2 \Ob(\delta_i)+\Ob(\delta_i^3) \right| \left|\phi_i\right|\\
		&\leq \sum_{i=1}^m\Ob(\delta_i) \|\phi_i\|_4 \|U_i\|_{8/3}^2 +  \|\phib\|_{H^1_0} \sum_{i=1}^m \Ob(\delta_i^3)\\
		&\leq C \sum_{i=1}^m \left( \lambda_i \delta_i^3+\delta_i^4\right)
\end{split}
\]
Moreover,
\[
\left| \sum_{i=1}^m \frac{1}{2}\int_\Omega |\nabla \phi_i|^2-\sum_{i=1}^m \frac{\lambda_i}{2}\int_\Omega \phi_i^2\right| \leq C  \|\phib\|_{H^1_0}^2 \leq C' \sum_{i=1}^m\left( \lambda_i^2 \delta_i^2 +\delta_i^4  \right),
\]
\begin{align*}
\left|-\sum_{i=1}^m \frac{\lambda_i}{2}\int_\Omega 2\mu_i^{-1/2}(PU_i)\phi_i \right| &\leq \sum_{i=1}^m \frac{\lambda_i}{2} \int_\Omega  (U_i+\Ob(\delta_i))|\phi_i| \leq \sum_{i=1}^m \lambda_i \Ob(\delta_i) \|\phib\|_{H^1_0}\\
&\leq C\sum_{i=1}^m (\lambda_i^2\delta_i^2 + \lambda_i \delta_i^3)
\end{align*}
and
\[
\begin{split}
\left| -\frac{\beta}{2}\sum_{i<j} \int_\Omega  \left( \mu_i^{-1}(PU_i)^2\phi_j^2  + 4 \mu_i^{-1/2}\mu_j^{-1/2}(PU_i)\phi_i (PU_j)\phi_j + 2\mu_i^{-1/2}(PU_i)\phi_i \phi_j^2 \right.\right.\\
	\left.\left. + \mu_j^{-1}\phi_i^2 (PU_j)^2+2\mu_j^{-1/2}\phi_i^2(PU_j)\phi_j+\phi_i^2\phi_j^2 \right)\right|\\
\leq \frac{|\beta|}{2} \sum_{i<j} \left( \|PU_i\|_4^2 \|\phi_j\|_4^2 + \|PU_i\|_4 \|PU_j\|_4  \|\phi_i\|_4 \|\phi_j\|_4 + \|PU_i\|_4 \|\phi_i\|_4 \|\phi_j\|^2_4\right.\\
	\left. +\|\phi_i\|_4^2 \| PU_j\|_4^2  +\|\phi_i\|_4^2 \|PU_j\|_4 \|\phi_j\|_4 +\|\phi_i\|_4^2 \|\phi_j\|_4^2\right)\\
\leq C \sum_{i<j} |\beta|\left(  \|\phib\|_{H^1_0}^2 +\|\phib\|_{H^1_0}^3 +\|\phib\|_{H^1_0}^4\right) \leq C' |\beta| \| \phib\|_{H^1_0}^2 \leq C'' |\beta| \sum_{i=1}^m\left( \lambda_i^2 \delta_i^2 +\delta_i^4  \right).
\end{split}
\]
Finally, using Lemma \ref{lemma:expansion_of_PU}--\ref{lemma:upper_estimate_auxiliar},
\[
\begin{split}
\left| -\frac{\beta}{2}\sum_{i<j} \right.& \left.\int_\Omega \left(2\mu_i^{-1}\mu_j^{-1/2}(PU_i)^2(PU_j)\phi_j +2\mu_i^{-1/2}\mu_j^{-1}(PU_i)\phi_i (PU_j)^2 \right) \right| \\
		& \leq  C|\beta| \sum_{i\neq j} \int_\Omega \left| (PU_i)^2(PU_j)\phi_j \right| \leq C |\beta| \|\phib\|_{H^1_0} \left(\int_\Omega (PU_i)^{8/3}(PU_j)^{4/3}\right)^{3/4} \\
		& =  |\beta|\Ob(\delta_i\delta_j) \|\phib\|_{H^1_0}\leq C'|\beta| \sum_{i=1}^m (\lambda_i \delta_i^4+\delta_i^4).
\end{split}
\]
The proof is then complete, combining all the previous estimates.
\end{proof}

In conclusion, we have the following energy expansion:
\begin{multline}\label{eq:energyexpansion_lambda}
\widetilde E(\deltab,\xib)= \sum_{i=1}^m \frac{c_4^4}{4}\mu_i^{-1} B + \sum_{i=1}^m \left(\frac{c_4^3}{2}\mu_i^{-1} A^2 \tau(\xi_i) \delta_i^2-\frac{c_4^2}{2} \mu_i^{-1} \omega_3 \lambda_i \delta_i^2 |\ln \delta_i| \right) \\
		+\sum_{i=1}^m (\ob(\delta_i^2)+ \lambda_i \Ob(\delta_i^2))+\sum_{i<j} |\beta| \Ob(\delta_i^2 \delta_j^2|\ln \delta_i \delta_j|),
\end{multline}
uniformly for every $\lambda_i$ small, $\xi$ in a compact set of $X$, as $\delta_i\to 0$.

\section{Proofs of the main results}\label{sec:Proofs}

In this section we prove our main results, namely Theorems \ref{thm:main1} and \ref{thm:main2}.

\subsection{Proof of Theorem \ref{thm:main1}}

Choose
\[
\delta_i=e^{-\frac{d_i}{\lambda_i}}, \qquad \text{ for some } d_i>0.
\]
Then, recalling that $c_4=2\sqrt{2}$, we have
\begin{multline*}
\sum_{i=1}^m \left(\frac{c_4^3}{2} \mu_i^{-1}A^2 \tau(\xi_i) \delta_i^2-\frac{c_4^2}{2} \mu_i^{-1} \omega_3 \lambda_i \delta_i^2 |\ln \delta_i| \right)\\
		= \sum_{i=1}^m e^{-\frac{2d_i}{\lambda_i}} \left( 8\sqrt{2} \mu_i^{-1}  A^2 \tau (\xi_i)-4\mu_i^{-1}\omega_3 d_i\right)=:\sum_{i=1}^m \Psi_{\lambda_i}(d_i,\xi_i)=: \Psi_{\lambdab}(\mathbf{d},\xib),
\end{multline*}
and thus
\begin{equation}\label{eq:expansionfinal}
\widetilde E(\deltab,\xib)=\widetilde E(\mathbf{d},\xib)=\sum_{i=1}^m 16\mu_i^{-1} B + \Psi_{\mathbf{\lambda}}(\mathbf{d},\xib)+\sum_{i=1}^m \ob(e^{-\frac{2d_i}{\lambda_i}})
\end{equation}
as $\eps:=\max\{\lambda_1,\ldots, \lambda_m\}\to 0^+$, uniformly in every compact set of $[0,+\infty)^m \times X$.

\begin{lemma}
Assume that the Robin function $\tau$ achieves $m$ distinct local minimums $\bar \xi_1,\ldots, \bar \xi_m$. Then, for each $\lambdab=(\lambda_1,\ldots, \lambda_m)$, the function $\Psi_{\lambdab}$ achieves a local minimum at the point
\[
(\mathbf{d},\xib)=\left(\left(\frac{\lambda_1}{2}+\frac{2\sqrt{2}A^2}{\omega_3}\tau(\bar \xi_1),\ldots, \frac{\lambda_m}{2}+\frac{2\sqrt{2}A^2}{\omega_3}\tau(\bar \xi_m)\right) , (\bar \xi_1,\ldots, \bar \xi_m)\right).
\]
\end{lemma}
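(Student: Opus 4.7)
The plan is to exploit the key structural observation that $\Psi_\lambdab(\mathbf{d},\xib)=\sum_{i=1}^m \Psi_{\lambda_i}(d_i,\xi_i)$ is a sum with \emph{no coupling} between the $(d_i,\xi_i)$ pairs: each summand depends on its own variables alone. Hence it suffices to prove that, for each $i$, the function $\Psi_{\lambda_i}$ has a local minimum at the claimed point $\bigl(\tfrac{\lambda_i}{2}+\tfrac{2\sqrt{2}A^2}{\omega_3}\tau(\bar\xi_i),\,\bar\xi_i\bigr)$, and the product structure immediately gives the result for $\Psi_\lambdab$.

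For each $i$ I would proceed in two steps: first optimize in $d_i$ with $\xi_i$ held fixed, then optimize in $\xi_i$ using the resulting profile. For the first step, differentiating
\[
\Psi_{\lambda_i}(d_i,\xi_i)=4\mu_i^{-1}e^{-2d_i/\lambda_i}\bigl(2\sqrt{2}A^2\tau(\xi_i)-\omega_3 d_i\bigr)
\]
in $d_i$ and solving gives the unique critical value
\[
d_i^*(\xi_i)=\frac{\lambda_i}{2}+\frac{2\sqrt{2}A^2}{\omega_3}\tau(\xi_i),
\]
and a quick second-derivative check (at the critical point the surviving term is $\tfrac{2\omega_3}{\lambda_i}e^{-2d_i^*/\lambda_i}>0$) shows this is a strict local (in fact global on $(0,\infty)$) minimum in $d_i$.

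Substituting $d_i^*(\xi_i)$ back yields the reduced profile
\[
\Psi_{\lambda_i}(d_i^*(\xi_i),\xi_i)=-2\mu_i^{-1}\omega_3\lambda_i\,\exp\!\Bigl(-1-\tfrac{4\sqrt{2}A^2}{\omega_3\lambda_i}\tau(\xi_i)\Bigr),
\]
which is a strictly increasing function of $\tau(\xi_i)$ (it takes negative values and approaches zero as $\tau(\xi_i)\to+\infty$). Therefore any local minimum $\bar\xi_i$ of $\tau$ produces a local minimum of $\xi_i\mapsto \Psi_{\lambda_i}(d_i^*(\xi_i),\xi_i)$; combined with the strict minimality of $d_i^*(\bar\xi_i)$ in the $d_i$-direction, this gives that $(d_i^*(\bar\xi_i),\bar\xi_i)$ is a local minimum of the two-variable function $\Psi_{\lambda_i}$. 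Summing over $i$ proves the statement. I do not anticipate any serious obstacle: the whole argument is an explicit calculus exercise, and the only mild care required is to observe that the separated structure of $\Psi_\lambdab$ reduces an $(m+4m)$-dimensional minimization to $m$ decoupled $(1+4)$-dimensional ones.
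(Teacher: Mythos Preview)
Your proposal is correct and follows essentially the same approach as the paper: both exploit the separable structure $\Psi_{\lambdab}=\sum_i\Psi_{\lambda_i}$ and optimize the two groups of variables in turn. The only (cosmetic) difference is the order: the paper first observes that for each fixed $d_i$ the map $\xi_i\mapsto\Psi_{\lambda_i}(d_i,\xi_i)$ is minimized at $\bar\xi_i$ (since the coefficient of $\tau(\xi_i)$ is positive), and then computes the critical point in $d_i$; you instead optimize in $d_i$ first and then analyze the reduced profile in $\xi_i$.
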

\begin{proof}
Take mutually disjoint open sets $\Lambda_1,\ldots, \Lambda_m$ such that, for each $i$, there exists $\bar \xi_i\in \Lambda_i$ such that $\min_{\overline{\Lambda}_i} \tau =\tau(\bar \xi_i)$. Then $\Psi_{\lambda_i} (d_i,\xi_i)\geq \Psi_{\lambda_i}(d_i,\bar \xi_i)$ for every $d_i\geq 0$, $\xi_i\in \overline{\Lambda}_i$. Since $\lim_{d_i\to 0} \Psi_{\lambda_i}(d_i,\bar \xi_i)=8\sqrt{2} A^2 \mu_i^{-1} \tau(\bar \xi_i)>0$, and $\lim_{d_i\to +\infty} \Psi_{\lambda_i} (d_i,\bar \xi_i)=0^-$, then in fact $\Psi_{\lambda_i}$ achieves an interior minimizer in $[0,+\infty) \times \overline{\Lambda}_i$. We can compute such minimiser directly:
\[
\frac{\partial}{\partial d_i} \Psi_{\lambda_i}(d_i,\bar \xi_i)=0 \iff d_i=\frac{\lambda_i}{2}+\frac{2\sqrt{2}A^2}{\omega_3}\tau(\bar \xi_i). \qedhere
\]
\end{proof}

\begin{proof}[Conclusion of the proof of Theorem \ref{thm:main1}] a) We can write:
\[
\widetilde E(\mathbf{d},\xib)=\sum_{i=1}^m 16 \mu_i^{-1} B+\sum_{i=1}^m e^{-\frac{2d_i}{\lambda_i}} \left( 8\sqrt{2} A^2 \mu_i^{-1}\tau (\xi_i)-4\mu_i^{-1}\omega_3 d_i  +\ob(1)\right),
\]
where $\ob(1)\to 1$ as $\eps=\max\{\lambda_1,\ldots, \lambda_m\}\to 0$. 

Fix $\gamma>0$ small. We claim that the function
\[
\varphi_{\lambdab}(d_i,\xi_i):=e^{-\frac{2d_i}{\lambda_i}} \left( 8\sqrt{2} A^2 \mu_i^{-1}\tau (\xi_i)-4\mu_i^{-1}\omega_3 d_i  +\ob(1)\right)
\]
admits a minimum in the interior of the set $S_{i,\gamma}:= [\frac{2\sqrt{2}}{\omega_3}A^2 \tau (\bar \xi_i)-\gamma,\frac{2\sqrt{2}}{\omega_3}A^2 \tau (\bar \xi_i)+\gamma] \times \Lambda_i$, if $\eps=\max\{\lambda_i\}$ is sufficiently small. In fact:
\begin{itemize}
\item[-] for $(d_i,\xi_i)\in   [\frac{2\sqrt{2}}{\omega_3}A^2 \tau (\bar \xi_i)-\gamma,\frac{2\sqrt{2}}{\omega_3}A^2 \tau (\bar \xi_i)+\gamma] \times \partial \Lambda_i$, one has $-d_i\geq -\frac{2\sqrt{2}}{\omega_3}A^2 \tau(\bar \xi_i)-\gamma$, so
\[
\begin{split}
\varphi_{\lambdab}(d_i,\xi_i)&\geq e^{-\frac{4\sqrt{2}}{\lambda_i \omega_3}A^2 \tau(\bar \xi_i)-\frac{2\gamma}{\lambda_i}}(8\sqrt{2}A^2 \mu_i^{-1}(\tau(\xi_i)-\tau(\bar \xi_i))-4\mu_i^{-1}\omega_3 \gamma +\ob(1))\\
						&\geq e^{-\frac{4\sqrt{2}}{\lambda_i \omega_3}A^2 \tau(\bar \xi_i)-\frac{2\gamma}{\lambda_i}}(-4\mu_i^{-1}\omega_3 \gamma +\ob(1)).
\end{split}
\]
\item[-] for $(d_i,\xi_i)\in   \{\frac{2\sqrt{2}}{\omega_3}A^2 \tau (\bar \xi_i)-\gamma\} \times \Lambda_i$,
\[
\varphi_{\lambdab}(d_i,\xi_i)\geq e^{-\frac{4\sqrt{2}}{\lambda_i\omega_3}A^2\tau(\bar \xi_i)+\frac{2\gamma}{\lambda_i}}(4\mu_i^{-1}\omega_3 \gamma +\ob(1))>0,
\]
\item[-] for $(d_i,\xi_i)\in \{\frac{2\sqrt{2}}{\omega_3}A^2 \tau (\bar \xi_i)+\gamma\} \times \Lambda_i$,
\[
\varphi_{\lambdab}(d_i,\xi_i)\geq e^{-\frac{4\sqrt{2}}{\lambda_i\omega_3}A^2 \tau(\bar \xi_i)-\frac{2\gamma}{\lambda_i}}(-4\mu_i^{-1}\omega_3 \gamma +\ob(1)).
\]
\end{itemize}
Thus we have
\[
\min_{\partial S_{i,\gamma}}\varphi_{\lambdab} \geq e^{-\frac{4\sqrt{2}}{\lambda_i\omega_3}A^2-\frac{2\gamma}{\lambda_i}}(-4\mu_i^{-1}\omega_3 \gamma +\ob(1))
\]
On the other hand, evaluating $\varphi_{\lambdab}$ at $(\frac{2\sqrt{2}}{\omega_3}A^2 \tau (\bar \xi_i)+\frac{\gamma}{2}, \bar \xi_i)$ (which lies in the interior of $S_{i,\gamma}$), we have
\[
\begin{split}
\varphi_{\lambdab} &=e^{-\frac{4\sqrt{2}}{\lambda_i\omega_3}A^2 \tau (\bar \xi_i)-\frac{\gamma}{\lambda_i}}(-2\mu_i^{-1}\omega_3 \gamma +\ob(1)).   \\
\end{split}
\]
Since
\[
e^{-\frac{4\sqrt{2}}{\lambda_i\omega_3}A^2-\frac{\gamma}{\lambda_i}}(-2\mu_i^{-1}\omega_3 \gamma +\ob(1))	< e^{-\frac{4\sqrt{2}}{\lambda_i\omega_3}A^2-\frac{2\gamma}{\lambda_i}}(-4\mu_i^{-1}\omega_3 \gamma +\ob(1))
\]
the claim is proved.

Thus, there exists $d_{i,\lambdab}\to \frac{2\sqrt{2}}{\omega_3}A^2 \tau(\bar \xi_i)$ as $\lambdab\to 0$, and points $\xi_i^{\lambdab}$ converging to a local minimum of $\tau$ in $\Lambda_i$, such that, defining $\delta_i^{\lambdab}=e^{-\frac{d_i \lambda_i}{\lambda_i}}$, then $(\deltab^{\lambdab},\xib^{\lambdab})$ is a minimiser of $\widetilde E$, hence $\widetilde E'(\delta^\lambda,\xib^\lambda)=0$. By invoking Proposition \ref{prop:Reduction}, we have obtained a solution of \eqref{eq:system_2eq_2} of the form
\[
u_i^{\lambdab}=\mu_i^{-1/2}PU_{\delta_{i}^{\lambdab},\xi_{i}^{\lambdab}}+\phi_{i}^{\lambdab}.
\]

\medbreak

\noindent b) Now, to conclude, we just need to show that  $u_i^{\lambdab}$ is positive:
\begin{itemize}
\item If $\beta\leq 0$, by multiplying the $i$--th equation in \eqref{eq:system_2eq_2} by $u_i^-$, we obtain
\[
-\int_\Omega |\nabla u_i^-|^2+\lambda_i \int_\Omega (u_i^-)^2=-\beta \sum_{j\neq i}\int_\Omega (u_i^-)^2  u_j^2\geq 0.
\]
and so, if  $u_i^-\not\equiv 0$, then
\[
0\leq -\int_\Omega |\nabla u_i^-|^2+\lambda_i \int_\Omega (u_i^-)^2\leq \int_\Omega |\nabla u_i^-|^2\left(-1+\frac{\lambda_i}{\lambda_1(\Omega)}\right)<0,
\]
a contradiction.
\item If $0<\beta\leq \bar \beta$, observe that, going through the proof of Proposition \ref{prop:estimate_error}, we conclude that
\[
\|\phib^{\lambdab}\|_{H^1_0} \leq c \sum_{i=1}^m (\lambda_i \delta_i^{\lambdab} +(\delta_i^{\lambdab})^2),
\]
for $c$ only depending on $\beta$, and so $\|u_i^{\lambdab}\|_4\leq C$, for every $\lambdab\sim 0$. Thus, if we choose $\bar \beta$ sufficiently small, we cannot have $u_i^-\not\equiv 0$, otherwise by using Cauchy-Schwarz and Sobolev inequalities:
\[
\begin{split}
0 &=-\int_\Omega |\nabla u_i^-|^2+\lambda_i \int_\Omega (u_i^-)^2+\beta \sum_{j\neq i}\int_\Omega (u_i^-)^2  u_j^2 \\
    &\leq \int_\Omega |\nabla u_i^-|^2\left(-1+\frac{\lambda_i}{\lambda_1(\Omega)}\right) + \bar \beta \left(\int_\Omega (u_i^-)^4\right)^{1/2} \leq \int_\Omega |\nabla u_i^-|^2 \left(\frac{\lambda_i-\lambda_1(\Omega)}{2\lambda_1(\Omega)}\right)<0
\end{split}
\]
which is again a contradiction.
\end{itemize}
\end{proof}

\subsection{Proof of Theorem \ref{thm:main2}}

The proof of this theorem follows more or less the same lines of the previous one. The main fact is that we need to prove that the expansion \eqref{eq:energyexpansion_lambda} is $C^1$ with respect to the variables $\deltab$ and $\xib$.

\begin{lemma}\label{lemma:C^1energyexpansion}
We have, for every $i=1,\ldots, m$:
\begin{multline*}
\frac{\partial }{\partial \delta_i}\widetilde E(\deltab,\xib)=   c_4^3\mu_i^{-1} A^2 \tau(\xi_i) \delta_i - c_4^2 \mu_i^{-1} \omega_3 \lambda_i \delta_i |\ln \delta_i| + \frac{c_4^2}{2} \mu_i^{-1} \omega_3 \lambda_i \delta_i   \\
		+\sum_{i=1}^m (\ob(\delta_i)+ \lambda_i \Ob(\delta_i))+\sum_{i<j} |\beta| \Ob(\delta_i \delta_j^2|\ln \delta_i \delta_j|).
\end{multline*}
and, for each $i=1,\ldots, m$ and $k=1,\ldots, 4$:
\begin{multline*}
\frac{\partial }{\partial (\xi_i)_k} \widetilde E(\deltab,\xib)=  \sum_{i=1}^m \frac{c_4^3}{2}\mu_i^{-1} A^2 \frac{\partial \tau}{\partial (\xi_i)_k}(\xi_i) \delta_i^2+\sum_{i=1}^m (\ob(\delta_i^2)+ \lambda_i \Ob(\delta_i^2))+\sum_{i<j} |\beta| \Ob(\delta_i^2 \delta_j^2|\ln \delta_i \delta_j|),
\end{multline*}
uniformly for every $\lambda_i$ small, $\xi$ in a compact set of $X$, as $\delta_i\to 0$.
\end{lemma}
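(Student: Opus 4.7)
The plan is to obtain the $C^1$ expansion by differentiating the pointwise expansion \eqref{eq:energyexpansion_lambda} essentially term by term. Writing $\widetilde E(\deltab,\xib)=E_0(\deltab,\xib)+R(\deltab,\xib)$ with $E_0:=E(\mu_1^{-1/2}PU_1,\ldots,\mu_m^{-1/2}PU_m)$ and $R$ given by \eqref{eq:R(delta,xi)}, I would compute $\partial_{\delta_i}E_0,\partial_{(\xi_i)_k}E_0$ explicitly and bound the corresponding derivatives of $R$. For the main terms, I would use the identities $\partial_{\delta_i}PU_{\delta_i,\xi_i}=P\psi^0_{\delta_i,\xi_i}/\delta_i$ and $\partial_{(\xi_i)_k}PU_{\delta_i,\xi_i}=P\psi^k_{\delta_i,\xi_i}/\delta_i$, and redo the calculations of $\partial_{\delta_i}A_i,\partial_{\delta_i}B_i,\partial_{\delta_i}C_i,\partial_{\delta_i}D_{ij}$ (and similarly for $\partial_{(\xi_i)_k}$) via the same rescaling $y=(x-\xi_i)/\delta_i$ and the expansions of Lemmas \ref{lemma:expansion_of_PU}--\ref{lemma:integral_estimates}. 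Direct inspection: differentiating the leading-order formulas from Section \ref{sec:EnergyExpansion} reproduces the stated terms; in particular, using $\partial_{\delta_i}(\delta_i^2|\ln\delta_i|)=2\delta_i|\ln\delta_i|-\delta_i$ explains the appearance of both $-c_4^2\mu_i^{-1}\omega_3\lambda_i\delta_i|\ln\delta_i|$ and $+\tfrac{c_4^2}{2}\mu_i^{-1}\omega_3\lambda_i\delta_i$, and differentiating the $\delta_i^2\tau(\xi_i)$ term in either variable yields $2c_4^3\mu_i^{-1}A^2\tau(\xi_i)\delta_i/2$ or the $\partial\tau/\partial(\xi_i)_k$ expression in the statement.

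For the derivative of $R$, I would differentiate each term appearing in the explicit formula \eqref{eq:R(delta,xi)}. Every term is an integral involving $PU_j$ (whose derivatives in $\delta_j,(\xi_j)_k$ are controlled in $L^p$ norms by $P\psi^k_{\delta_j,\xi_j}/\delta_j$) and the remainder $\phi_j^{\deltab,\xib}$, whose partial derivatives must also be controlled. The idea is to reproduce the bounds of Lemma \ref{lemma:remainderestimate}, losing one power of $\delta_i$ per differentiation in $\delta_i$. For instance, the term $\int_\Omega\mu_i^{-1/2}(U_i^3-(PU_i)^3)\phi_i$ has $\delta_i$-derivative dominated by $\|\partial_{\delta_i}(U_i^3-(PU_i)^3)\|_{4/3}\|\phi_i\|_4+\|U_i^3-(PU_i)^3\|_{4/3}\|\partial_{\delta_i}\phi_i\|_4$, both of which can be bounded by $\Ob(\lambda_i\delta_i^2+\delta_i^3)$, matching the claim. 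The interaction terms are handled analogously using Lemma \ref{lemma:auxiliaryestimate} and the estimates of $PU_i PU_j$ products.

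The main technical obstacle is estimating $\|\partial_{\delta_i}\phib^{\deltab,\xib}\|_{H^1_0}$ and $\|\partial_{(\xi_i)_k}\phib^{\deltab,\xib}\|_{H^1_0}$. These are obtained by differentiating the fixed-point identity $\phib^{\deltab,\xib}=L_{\deltab,\xib}^{-1}(R_{\deltab,\xib}+N_{\deltab,\xib}(\phib^{\deltab,\xib}))$ implicitly, which gives a linear equation $(I-L_{\deltab,\xib}^{-1}N'_{\deltab,\xib}(\phib^{\deltab,\xib}))\partial_{\delta_i}\phib=L_{\deltab,\xib}^{-1}\partial_{\delta_i}R_{\deltab,\xib}+(\partial_{\delta_i}L_{\deltab,\xib}^{-1})(R_{\deltab,\xib}+N_{\deltab,\xib}(\phib))+L_{\deltab,\xib}^{-1}\partial_{\delta_i}N_{\deltab,\xib}(\phib)$. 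By Lemma \ref{lemma:L_invertible}, $L_{\deltab,\xib}^{-1}$ is continuous uniformly in $\deltab,\xib$, and since $\|N'_{\deltab,\xib}(\phib^{\deltab,\xib})\|\to 0$ (as shown in the proof of Lemma \ref{lemma:differentiability}), the left-hand side operator is uniformly invertible. Carefully estimating $\partial_{\delta_i}R_{\deltab,\xib}$ and the other right-hand side terms by the same computations as in Proposition \ref{prop:estimate_error} (each differentiation in $\delta_i$ brings a factor $1/\delta_i$, but the base bound carries an extra $\delta_i$), one obtains $\|\partial_{\delta_i}\phib^{\deltab,\xib}\|_{H^1_0}=\Ob(\lambda_i+\delta_i)$ and $\|\partial_{(\xi_i)_k}\phib^{\deltab,\xib}\|_{H^1_0}=\Ob(\lambda_i\delta_i+\delta_i^2)/\delta_i$. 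These bounds are sufficient to close the estimates of $\partial R$ and yield the $C^1$ expansion. Apart from the extra competition terms $\int\beta u_i^2 u_j^2$ (whose derivatives are straightforward to handle using the Hölder-type bounds already employed), this follows the pattern of the analogous arguments in \cite{MussoPistoiaIndiana2002}.
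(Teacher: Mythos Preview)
Your approach is correct but takes a genuinely different route from the paper. The paper avoids estimating $\partial_{\delta_i}\phib$ and $\partial_{(\xi_i)_k}\phib$ altogether by exploiting the structural identity recorded in the proof of Proposition~\ref{prop:Reduction}: since $E'(V^{\deltab,\xib})$ vanishes on $K_{\deltab,\xib}^\perp$, the chain rule collapses to
\[
\partial_{\delta_i}\widetilde E(\deltab,\xib)=E'(V^{\deltab,\xib})\bigl[0,\ldots,\mu_i^{-1/2}PZ_i^0,\ldots,0\bigr],
\qquad Z_i^0:=\partial_{\delta_i}U_i,
\]
and similarly for $\partial_{(\xi_i)_k}$. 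This expresses the derivative as a single integral pairing of $PZ_i^k$ against the equation, so the remainder involves only $\phib$ itself (bounded by Proposition~\ref{prop:estimate_error}), never its derivatives; the estimates then mirror those of Lemma~\ref{lemma:remainderestimate} line by line. Your plan instead differentiates $R(\deltab,\xib)$ term by term and therefore must control $\|\partial_{\delta_i}\phib\|_{H^1_0}$ via implicit differentiation of the fixed-point equation. That works, but it is more labor and carries the extra subtlety that the constraint space $K_{\deltab,\xib}^\perp$ itself depends on $(\deltab,\xib)$, so the implicit differentiation has to be set up carefully (e.g.\ by pulling back to a fixed space). The payoff of the paper's shortcut is precisely to bypass this step; the payoff of your approach is that it is completely mechanical once the derivative bounds on $\phib$ are in hand.
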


\begin{proof}
 Recalling the proof of Proposition \ref{prop:Reduction}, we have, for every $i=1,\ldots, m$:
\begin{equation}\label{eq:C1expansion_k=0}
\partial_{\delta_i} \widetilde E (\deltab,\xib)=E'(\mu_1^{-1/2}PU_1+\phi_1,\ldots, \mu_m^{-1/2}PU_m + \phi_m)[0,\ldots, 0,\mu_i^{-1/2}P \partial_{\delta_i} U_i,0\ldots, 0],
\end{equation}
and, for $i=1,\ldots, m$ and $k=1,\ldots, 4$,
\begin{equation}\label{eq:C1expansion_k>0}
\partial_{(\xi_i)_k} \widetilde E (\deltab,\xib) =E'(\mu_1^{-1/2}PU_1+\phi_1,\ldots, \mu_m^{-1/2}PU_m + \phi_m)[0,\ldots, 0,\mu_i^{-1/2}P\partial_{({\xi_i})_k}U_i,0\ldots, 0].
\end{equation}
Along this proof, we will use the notations $Z_i^0:= \partial_{\delta_i}U_i$ and $Z_i^k=\partial_{(\xi_i)_k} U_i$. Since the expansion in \eqref{eq:expansion1} is $C^1$, we have
\[
PZ_i^0=Z_i^0- A H(\cdot,\xi)+\ob(1),\qquad PZ_i^k=Z_i^k- \delta_i A  \frac{\partial H}{\partial (\xi_i)_k}(\cdot,\xi)+\ob(\delta_i)
\]
and the proof will be very similar to the energy expansion performed in Section \ref{sec:Energy_expansion}. For that reason, here we will present less computations.

\medbreak

We need to expand the quantity:
\begin{multline*}
 \int_\Omega (f(\mu_i^{1/2}U_i)-f(\mu_i^{-1/2} PU_i + \phi_i)) \mu_i^{1/2} (PZ_i^k) - \lambda_i \int_\Omega (\mu_i^{-1/2} PU_i + \phi_i)\mu_i^{-1/2} (PZ_i^k)\\
 		 -\beta  \int_\Omega (\mu_i^{-1/2} PU_i + \phi_i) \mu_i^{-1/2} (PZ_i^k) \sum_{j\neq i} (\mu_j^{-1/2} (PU_j)+\phi_j)^2
\end{multline*}
which coincides with \eqref{eq:C1expansion_k=0} for $k=0$, and with \eqref{eq:C1expansion_k>0} for $k\geq 1$. We can write this expression as
\begin{multline*}
\int_\Omega (f(\mu_i^{-1/2}U_i)-f(\mu_i^{-1/2}PU_i))\mu_i^{1/2} (PZ_i^k) - \lambda_i \mu_i^{-1}\int_\Omega (PU_i) (PZ_i^k) \\
-\beta \int_\Omega \mu_i^{-1} (PU_i) (PZ_i^k) \sum_{j\neq i} \mu_j^{-1} (PU_j)^2 +  R_i(\deltab,\xib),
\end{multline*}
with 
\begin{multline*}
R^k_i(\deltab,\xib)= \int_\Omega (f(\mu_i^{-1/2} PU_i)-f(\mu_i^{-1/2}PU_i + \phi_i))\mu_i^{1/2} (PZ_i^k) - \lambda_i \mu_i^{-1/2} \int_\Omega  \phi_i (PZ_i^k)\\
-\beta  \mathop{\sum_{j=1}^m}_{j\neq i}\int_\Omega (\mu_i^{-1} (PU_i) (PZ_i^k) \phi_j^2 + 2\mu_i^{-1}\mu_j^{-1/2}(PU_i) (PZ_i^k) (PU_j) \phi_j 	+ \mu_i^{-1/2}\mu_j^{-1} (PZ_i^k) \phi_i (PU_j)^2  \\
+ \mu_i^{-1/2} (PZ_i^k) \phi_i \phi_j^2 + 2\mu_i^{-1/2}\mu_j^{-1/2} (PZ_i^k) \phi_i (PU_j) \phi_j).
\end{multline*}

\medbreak

\noindent 1) For $k=0$, 
\[
\begin{split}
\int_\Omega (f(\mu_i^{-1/2}U_i)-f(\mu_i^{-1/2} PU_i + \phi_i)) \mu_i^{1/2} (PZ_i^0) &= \int_\Omega \mu_i^{-1} (U_i^3-(PU_i)^3)PZ_i^0\\
								&= 3\mu_i^{-1}\int_\Omega \delta_i U_i^2 AH(x,\xi_i) PZ_i^0 +\ob(\delta_i)\\
								&= 3\mu_i^{-1} c_4^3 A \delta_i^2 \tau(\xi_i) \int_{\R^4} \frac{|y|^2-1}{(1+|y|^2)^4} + \ob(\delta_i)\\
								&=\mu_i^{-1} c_4^3 A^2 \delta_i^2 \tau(\xi_i) + \ob(\delta_i),
\end{split}
\]
where we have used the fact that 
\[
\int_{\R^4} \frac{|y|^2-1}{(1+|y|^2)^4} = \frac{1}{3}\int \frac{1}{(1+|y|^2)^3}
\]
(check for instance \cite[Remark B.2]{MussoPistoiaIndiana2002}) and $|Z_i^0|\leq C|U_i|/\delta$. Moreover,
\[
\begin{split}
\lambda_i \mu_i^{-1} \int_\Omega (PU_i)Z_i^0 &= \lambda_i \mu_i^{-1} \int_\Omega (U_i-\delta_i A H(x,\xi_i) + \ob(\delta_i))(Z_i^0-AH(x,\xi_i)+\ob(1))\\
									&= \lambda_i \mu_i^{-1} \int_\Omega U_i Z_i^0 + \lambda_i \Ob(\delta_i)=\lambda_i \mu_i^{-1} c_4^2 \delta_i \int_{\frac{\Omega -\xi_i}{\delta_i}} \frac{|y|^2-1}{(1+|y|^2)^3} + \lambda_i \Ob(\delta_i)\\
									&=\lambda_i \mu_i^{-1} c_4^2 \delta_i \int_{\frac{\Omega -\xi_i}{\delta_i}} \frac{1}{(1+|y|^2)^2} + \lambda_i \Ob(\delta_i)=\lambda_i \mu_i^{-1}c_4^2 \omega_3 \delta_i |\ln \delta_i| + \lambda_i \Ob(\delta_i)
\end{split}
\]
and, for $j\neq i$,
\begin{multline*}
\beta \mu_i^{-1}\mu_j^{-1}\int_\Omega (PU_i)(PZ_i^0) (PU_j)^2 \\
		= \beta \mu_i^{-1}\mu_j^{-1} \int_\Omega (U_i-\delta_i AH(x,\xi_i)+\ob(\delta_i))(Z_i^0-AH(x,\xi_i)+\ob(1))(U_j^2+U_j \Ob(\delta_j)+\Ob(U_j^2))\\
		= \beta \mu_i^{-1}\mu_j^{-1}\int_\Omega U_i Z_i^0 U_j^2 + \beta \Ob(\delta_i \delta_j^2 \ln \delta_j)= \beta\Ob(\delta_i \delta_j^2 \ln \delta_j).
\end{multline*}

As for the remainder term, we can write
\begin{multline*}
\int_\Omega (f(\mu_i^{-1/2} PU_i)-f(\mu_i^{-1/2}PU_i + \phi_i))\mu_i^{1/2} (PZ_i^0)\\
=\int_\Omega (f'(\mu_i^{-1/2} PU_i)\phi_i+  f(\mu_i^{-1/2} PU_i)-f(\mu_i^{-1/2}PU_i + \phi_i))\mu_i^{1/2} (PZ_i^0) \\
+ \int_\Omega (f'(\mu_i^{-1/2}U_i)-f'(\mu_i^{-1/2}PU_i))\mu_i^{1/2}PZ_i^k
\end{multline*}
and, by reasoning exactly as in the proof of Lemma \ref{lemma:remainderestimate}, we deduce \[
|R^0_i(\deltab,\xib)| \leq C \sum_{i=1}^m\left( \lambda_i \delta_i^2+ \delta_i^3  \right).
\]

\medbreak

\noindent 2) For $k=1,\ldots, 4$,

\[
\begin{split}
\int_\Omega (f(\mu_i^{-1/2}U_i)-&f(\mu_i^{-1/2} PU_i + \phi_i)) \mu_i^{1/2} (PZ_i^k) = \int_\Omega \mu_i^{-1} (U_i^3-(PU_i)^3)PZ_i^k\\
								&= 3\mu_i^{-1}\int_\Omega \delta_i U_i^2 AH(x,\xi_i) PZ_i^k +\ob(\delta_i^2)= \mu_i^{-1} \int_\Omega  \delta_i \frac{\partial}{\partial_{x_k}}((PU_i)^3)A H(x,\xi_i) + \ob(\delta_i^2)  \\
								&=   - \mu_i^{-1}A\int_\Omega \frac{\delta_i^2}{2}  \frac{\partial \tau}{\partial \xi_i}(\xi_i) U_i^3 + \ob(\delta_i^2) = -\mu_i^{-1} A^2 \delta_i^3 \frac{c_4^3}{2}\frac{\partial \tau}{\partial \xi_i}(\xi_i) + \ob(\delta_i^2),
\end{split}
\]
\[
\begin{split}
\lambda_i \mu_i^{-1} \int_\Omega (PU_i) (PZ_i^k) &=\lambda_i \mu_i^{-1}\int_\Omega U_i Z_i^k + \lambda_i \Ob(\delta_i^2)\\
									&=\lambda_i \mu_i^{-1} \delta_i \int_{\R^4} \frac{y_k}{(1+|y|^2)^2}+\Ob(\delta_i^2)=\Ob(\delta_i^2)
\end{split}
\]
and, for $j\neq i$,
\[
\begin{split}
\beta \mu_i^{-1}\mu_j^{-1}\int_\Omega (PU_i)(PZ_i^0) (PU_j)^2 &= \beta \mu_i^{-1}\mu_j^{-1} \int_\Omega U_iZ_i^k U_j^2 + \beta \Ob(\delta_i^2 \delta_j^2 \ln \delta_j)\\
												&=\beta \Ob(\delta_i^2 \delta_j^2 \ln \delta_j),
\end{split}
\]
by using the fact that $\displaystyle \int_{\R^4} \frac{y_k}{(1+|y|^2)^3}=0$. Finally, reasoning as before, we deduce the estimate for the remainder term:
\[
|R^k_i(\deltab,\xib)| \leq C \sum_{i=1}^m\left( \lambda_i \delta_i^3+ \delta_i^4  \right). \qedhere
\]

\end{proof}

Now choose
\[
\delta_i=e^{-\frac{d_i}{\lambda_i}}, \qquad \text{ for some } d_i>0.
\]
The previous lemma implies that the expansion
\[
\widetilde E(\deltab,\xib)=\widetilde E(\mathbf{d},\xib)=\sum_{i=1}^m 16\mu_i^{-1} B + \Psi_{\mathbf{\lambda}}(\mathbf{d},\xib)+\sum_{i=1}^m \ob(e^{-\frac{2d_i}{\lambda_i}})
\]
is $C^1$ in $(\deltab,\xib)$, as $\eps:=\max\{\lambda_1,\ldots, \lambda_m\}\to 0^+$, uniformly in every compact set of $[0,+\infty)^m \times X$. Recall that, as before, we are defining $\sum_{i=1}^m \Psi_{\lambda_i}(d_i,\xi_i)=: \Psi_{\lambdab}(\mathbf{d},\xib)$, with

\[
\Psi_{\lambda_i}(d_i,\xi_i)= \sum_{i=1}^m e^{-\frac{2d_i}{\lambda_i}} \left( 8\sqrt{2} \mu_i^{-1}  A^2 \tau (\xi_i)-4\mu_i^{-1}\omega_3 d_i\right).
\]

We want to show that, as $\lambdab\to 0$, there exist $\mathbf{d}_{\lambdab},\xib_{\lambdab}$ such that
\[
\begin{cases}
e^{-\frac{2d_i}{\lambda_i}}(-4 \mu_i^{-1} \omega_3 -\frac{2}{\lambda_i} (8\sqrt{2} \mu_i^{-1} A^2 \tau(\xi_i)-4\mu_i^{-1}\omega_3 d_i) )+\ob(e^{-\frac{2d_i}{\lambda_i}})=0\\
e^{-\frac{2d_i}{\lambda_i}} 8\sqrt{2} \mu_i^{-1} A^2 \nabla \tau (\xi_i) + o(e^{-\frac{2 d_i}{\lambda_i}})=0.
\end{cases}
\]
or, equivalently,
\begin{equation}\label{eq:system_stable}
\begin{cases}
-4 \lambda_i \mu_i^{-1} \omega_3 -16 \sqrt{2} \mu_i^{-1} A^2 \tau (\xi_i) + 8 \mu_i^{-1} \omega_3 d_i +\ob(1)=0\\
\nabla \tau(x_i)+\ob(1)=0
\end{cases}
\end{equation}
as $\lambdab\to 0$, uniformly for $\deltab,\xib$ in a compact set of $[0,+\infty)^m \times X$.

\begin{proof}[Conclusion of the proof of Theorem \ref{thm:main2}]
Assume that the Robin function $\tau$ has $m$ distinct stable critical points: $\bar \xi_1,\ldots, \bar \xi_m$, in the sense of Definition \ref{def:stable}.
The conclusion follows as soon as we prove that, as $\lambdab\to 0$, there exists $(\mathbf{d}_{\lambdab},\xib_{\lambdab})$ solution of system \eqref{eq:system_stable}.

Define the map
\[
\Lambda(\mathbf{d},\xib)=\left(
\begin{array}{c} 
-16 \sqrt{2} \mu_1^{-1} A^2 \tau (\xi_1 ) + 8 \mu_1^{-1} \omega_3 d_1  \\
\vdots\\
-16 \sqrt{2} \mu_m^{-1} A^2 \tau (\xi_m ) + 8 \mu_m^{-1} \omega_3 d_m  \\
\nabla \tau(\xi_1)\\
\vdots\\
\nabla \tau(\xi_m)
\end{array} \right)
\]
which vanishes at 
\[
(d_1,\ldots, d_m,\xi_1,\ldots, \xi_m)=\left( \frac{2\sqrt{2} A^2}{\omega_3}\tau(\bar x_1),\ldots, \frac{2\sqrt{2} A^2}{\omega_3}\tau(\bar x_m), \bar \xi_1,\ldots, \xi_m  \right).
\]
Consider 
\[
U:= \prod_{i=1}^m U_i:= \prod_{i=1}^m \left[\frac{2\sqrt{2} A^2}{\omega_3}\tau(\bar x_i)-\delta,\frac{2\sqrt{2} A^2}{\omega_3}\tau(\bar x_i)+\delta\right]
\]
for $\delta$ small, and $V:=\Pi_{i=1}^m V_i$, where $V_i$ is a neighborhood of $\bar \xi_i$ considered in Definition \ref{def:stable}. Following \cite[Lemma 3.1]{MussoPistoiaIndiana2002}, we consider the following deformation from $\Lambda(\mathbf{d},\xib)$ to
\[
\widetilde \Lambda(\mathbf{d},\xib)=\left(
\begin{array}{c} 
-16 \sqrt{2} \mu_1^{-1} A^2 \tau (\bar \xi_1 ) + 8 \mu_1^{-1} \omega_3 d_1  \\
\vdots\\
-16 \sqrt{2} \mu_m^{-1} A^2 \tau (\bar \xi_m ) + 8 \mu_m^{-1} \omega_3 d_m  \\
\nabla \tau(\xi_1)\\
\vdots\\
\nabla \tau(\xi_m)
\end{array} \right)
\]
through the map  $H:[0,1]\times U\times V\to \R^{m+Nm}$ defined by 
\[
H(t,(\deltab,\xib))= t\Lambda(\mathbf{d},\xib)+(1-t) \widetilde \Lambda(\mathbf{d},\xib)
= \left( \begin{array}{c} -16 \sqrt{2} \mu_i^{-1} A^2 (t\tau(\xi_1)+(1-t) \tau(\bar \xi_1))+8 \mu_1^{-1} \omega_3 \lambda_1\\
				\vdots\\
				-16 \sqrt{2} \mu_m^{-1} A^2 (t\tau(\xi_m)+(1-t) \tau(\bar \xi_m))+8 \mu_m^{-1} \omega_3 \lambda_m \\
						\nabla \tau(\xi_1)\\
						\vdots\\
						\nabla \tau(\xi_m)
						 \end{array}\right)
			\]
We claim that $H(t,(\mathbf{d},\xib))\neq 0$ for every $(\mathbf{d},\xib)\in \partial (U\times V)$, $t\in [0,1]$. In fact, if $\mathbf{d}\in U$ and  $\mathbf{\xi}\in \partial V$, then $\nabla \tau(\xi_i)\neq 0$ for some $i$. If, instead, $\mathbf{d}\in \partial U$ and $\xi\in V$, then we have $d_i=\frac{2\sqrt{2} A^2}{\omega_3}\tau(\bar x_i)\pm \delta$ for some $i$, and  $-16 \sqrt{2} \mu_i^{-1} A^2 (t\tau(\xi_i)+(1-t) \tau(\bar \xi_i))+8 \mu_i^{-1} \omega_3 \lambda_i= 16\sqrt{2} \mu_i^{-1} A^2 t(\tau(\bar \xi_i)-\tau(\xi_i))\pm 8 \mu_i^{-1}\omega_3 d_i \delta$; hence either $\tau(\bar \xi_i)\neq 0$, or $\tau(\bar \xi_i)=0$ and $\tau(\xi_i)=\tau(\bar \xi_i)$. In every situation the claim follows, and by the invariance property of the degree, we conclude that
\[
\deg (\Lambda,U\times V,\mathbf{0})=\deg (\widetilde \Lambda,U\times V,\mathbf{0})\neq 0. \qedhere
\]
\end{proof}

\subsection{Proof of Theorem \ref{thm:main3}}

We will take any sequence $\beta=\beta(\lambdab)\to -\infty$ such that
\[
\beta=\ob\left(  	\exp\left( \frac{c_4}{2\omega_3 \lambda_i}A^2 \tau(\xi_i^0)\right)\right) \quad \forall i
\]
and work with $\delta_i=e^{-\frac{d_i}{\lambda_i}}$, with
\[
d_i \in \left[ \frac{c_4}{\omega_3 \lambda_i}A^2 \tau(\xi_i^0) -\gamma , \frac{c_4}{\omega_3 \lambda_i}A^2 \tau(\xi_i^0)+\gamma\right]
\]
for $\gamma<\frac{c_4}{4\omega_3 \lambda_i}A^2 \tau(\xi_i^0)$. In particular, $d_i\geq \frac{3c_4}{4\omega_3} A^2 \tau(\xi_i^0)$ and
\[
\delta_i \leq e^{- \frac{3c_4}{4\omega_3 \lambda_i} A^2 \tau (\xi_i^0)}.
\]
Denote, to simplify notations, $C_i:= \frac{c_4 A^2}{\omega_3}\tau(\xi_i^0)$, so that
\[
\beta=\ob(e^\frac{C_i}{2\lambda_i}) \ \forall i \qquad \text{ and } \qquad \delta_i=\delta_i(d_i,\xi_i) \leq e^{-\frac{3C_i}{4\lambda_i}}
\]
as $\lambdab\to 0$ (we stress the fact that $\deltab$ is a (explicit) function  of $\mathbf{d}$ and $\xib$).

We will bliefly explain here why, in order to prove Theorem \ref{thm:main3}, we can repeat, for these choices of $\beta$ and $\deltab$, the proofs of Theorems \ref{thm:main1} and \ref{thm:main2}:
\begin{enumerate}
\item[a)] Lemma \ref{lemma:L_invertible} continues to hold in this context, as soon as we realize that
\[
|\beta|\delta_i^2 =\ob(1) e^\frac{2C_i}{2\lambda_i} e^{-\frac{3C_i}{2\lambda_i}}\to 0 \qquad \text{ as } \lambda_i\to 0,
\]
so that also $|\beta|\delta_i\delta_j\to 0$, $|\beta|\delta_i^2 \delta_j^2\to 0$ and $|\beta|\delta_i^2 \sqrt{|\ln \delta_i|}\to 0$. Thus, even if $\beta$ varies, the estimates  \eqref{w_{in}=2nd}, \eqref{eq:B_n}, \eqref{eq:C_n}, \eqref{eq:B'_n1} and \eqref{eq:C'_n} --the only places where $\beta$ appears --  continue to hold.
\item[b)] As for Proposition \ref{prop:estimate_error}, we obtain, for $c$ independent from $\beta$,
\[
\|\phib\|_{H^1_0}\leq c\sum_{i=1}^m (\lambda_i \delta_i +\delta_i^2 |\beta| \delta_i^2).
\]
In fact, recalling \eqref{eq:R_deltaxi}, we have (using the notation of the proof)
\[
\left \|\widetilde R_{\deltab,\xib} \right\|_{4/3} \leq c\sum_{i=1}^m (\delta_i^2+\lambda_i \delta_i+ |\beta| \delta_i^2),
\]
so that
\[
\left\| T_{\deltab,\xib}(\phib) \right\|_{H^1_0}\leq c'\left( (1+|\beta|)(\|\phib\|_{H^1_0}^2 +\|\phib\|_{H^1_0}^3)+\sum_{i}(\delta_i^2+\lambda_i \delta_i +\beta\delta_i^2) \right)
\]
and for some $c''$:
\begin{multline*}
T_{\deltab,\xib}\left(\left\{ \phib\in K_{\deltab,\xib}^\perp:\ \|\phib\|_{H^1_0}\leq \sum_{i=1}^m c'' \left(\delta_i^2+\lambda_i\delta_i + |\beta|\delta_i^2\right)  \right\}\right) 		\\   \subseteq \left\{ \phib\in K_{\deltab,\xib}^\perp:\ \|\phib\|_{H^1_0}\leq c'' \sum_{i=1}^m \left(\delta_i^2+\lambda_i\delta_i+|\beta|\delta_i^2\right)  \right\}
\end{multline*}
simply because $|\beta|^3\delta_i^4\leq |\beta|\delta_i^2$ is equivalent to $|\beta|^2\delta_i^2\leq 1$, which holds for small $\lambdab$ since
\[
|\beta|^2\delta_i^2 =\ob(1) e^\frac{C_i}{\lambda_i}e^{-\frac{3C_i}{2\lambda_i}}\to 0 \quad \text{ as } \lambda_i\to 0.
\]
\item[c)] Having this estimate at hand for the remainder term $\phib$, we can prove, exactly as in Lemma \ref{lemma:differentiability}, that the map $(\mathbf{d},\xib)\mapsto \phi^{\deltab,\xib}$ is $C^1$: in fact, in estimate \eqref{eq:comparing2}, we have
\[
\begin{split}
|\beta| \|\phib^{\deltab,\xib}\| &\leq c \sum_{i=1}^m (\lambda_i |\beta|\delta_i + |\beta| \lambda_i \delta_i +|\beta|\delta_i^2)\\
					& \leq \tilde c \sum_{i=1}^m (\lambda_i e^\frac{C_i}{2\lambda_i}e^{-\frac{3C_i}{4\lambda_i}} + e^\frac{C_i}{2\lambda_i} e^{-\frac{3C_i}{2\lambda_i}})\to 0	
	\end{split}		
\]
as $\lambda_i\to 0$.
\item[d)] Finally, as for the energy estimate, since
\[
|\beta|\delta_i^2 \ln |\delta_i| \to 0 \qquad \text{ as } \lambda_i\to 0,
\]
the we can repeat the word by word the arguments of Section \ref{sec:Energy_expansion} and Lemma \ref{lemma:C^1energyexpansion}, and obtain the estimate \eqref{eq:expansionfinal}. Once this is known, the rest of the proof follows exactly as in the previous two subsection.\end{enumerate}


\appendix

\section{Auxiliary estimates}\label{sec:Appendix}

We start by recalling here the notations
\[
U_{\delta,\xi}(x)=c_4\frac{\delta}{\delta^2+|x-\xi|^2},\qquad \psi^0_{\delta,\xi}=\delta \frac{\partial U_{\delta,\xi}}{\partial \delta} \qquad \text{and} \qquad \psi^j_{\delta,\xi}=\delta\frac{\partial U_{\delta,\xi}}{\partial \xi_j},\ j=1,\ldots, 4.
\]

\begin{lemma}\label{lemma:expansion_of_PU}
Given any $K\Subset \Omega\subset \R^4$, we have
\begin{equation}\label{eq:expansion1}
PU_{\delta,\xi}=U_{\delta,\xi}-\delta A H(\cdot,\xi)+R_{\delta,\xi},
\end{equation}
with
\[
\|R_{\delta,\xi}\|_\infty=\textrm{o}(\delta),\quad \left\|\frac{\partial R_{\delta,\xi}}{\partial \delta} \right\|_\infty=\textrm{o}(1),\quad \left\|\frac{\partial R_{\delta,\xi}}{\partial \xi_j}\right\|_\infty=\textrm{o}(\delta)
\]
as $\delta\to 0$, uniformly for $\xi\in K$, with
\[
A:=\int_{\R^4} U_{1,0}^3=c_4^3\int_{\R^4}\frac{1}{(1+|y|^2)^3}=\frac{c_4}{\alpha_4}.
\]
In particular, we have
\begin{equation}\label{eq:expansion2}
PU_{\delta,\xi}=U_{\delta,\xi}-\delta A H(\cdot,\xi)+\textrm{o}(\delta),\qquad P\psi^0_{\delta,\xi}=\psi_{\delta,\xi}^0-\delta A H(\cdot,\xi)+\ob(\delta)
\end{equation}
and
\begin{equation}\label{eq:expansion3}
P\psi_{\delta,\xi}^j=\psi_{\delta,\xi}^j-\delta^2 A\frac{\partial H}{\partial \xi}(\cdot,\xi)+\ob(\delta^2), \qquad j=1,\ldots, 4,
\end{equation}
as $\delta\to 0$, uniformly for $\xi\in K$.
\end{lemma}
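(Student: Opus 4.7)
The plan is to prove the expansion by defining the remainder explicitly and showing it is a harmonic function on $\Omega$ with small boundary data. Set
\[
R_{\delta,\xi}(x) := PU_{\delta,\xi}(x) - U_{\delta,\xi}(x) + \delta A\, H(x,\xi).
\]
Since $-\Delta PU_{\delta,\xi} = U_{\delta,\xi}^3$ on $\Omega$, $-\Delta U_{\delta,\xi} = U_{\delta,\xi}^3$ on all of $\R^4$, and $H(\cdot,\xi)$ is harmonic on $\Omega$, I would check that $R_{\delta,\xi}$ is harmonic on $\Omega$. On $\partial\Omega$, $PU_{\delta,\xi}=0$ and $H(x,\xi) = \alpha_4/|x-\xi|^2$, so using the identity $A\alpha_4=c_4$ stated in the lemma, a direct computation gives the boundary value
\[
R_{\delta,\xi}(x) \Big|_{\partial\Omega} = -\frac{c_4\delta}{\delta^2+|x-\xi|^2}+\frac{c_4\delta}{|x-\xi|^2} = \frac{c_4\,\delta^3}{|x-\xi|^2\bigl(\delta^2+|x-\xi|^2\bigr)}.
\]
For $\xi$ in a compact $K\Subset\Omega$ and $x\in\partial\Omega$ we have $|x-\xi|\ge \dist(K,\partial\Omega)>0$, hence this boundary value is $O(\delta^3)$ uniformly. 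By the maximum principle, $\|R_{\delta,\xi}\|_\infty = O(\delta^3) = \ob(\delta)$, which is the first estimate.

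The derivative estimates follow by differentiating under the sign of the harmonic extension. Note that $\partial_\delta R_{\delta,\xi}$ and $\partial_{\xi_j} R_{\delta,\xi}$ are again harmonic on $\Omega$ (differentiation in parameters commutes with $-\Delta$), with boundary values obtained by differentiating the explicit expression above. An elementary computation yields
\[
\partial_\delta R_{\delta,\xi}(x)\big|_{\partial\Omega} = O(\delta^2),\qquad \partial_{\xi_j} R_{\delta,\xi}(x)\big|_{\partial\Omega} = O(\delta^3),
\]
uniformly for $\xi\in K$. Applying the maximum principle once more gives $\|\partial_\delta R_{\delta,\xi}\|_\infty = O(\delta^2) = \ob(1)$ and $\|\partial_{\xi_j} R_{\delta,\xi}\|_\infty = O(\delta^3) = \ob(\delta)$, as required.

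Finally, for \eqref{eq:expansion2}--\eqref{eq:expansion3}, I would use that the projection $P$ commutes with differentiation in $\delta$ and $\xi_j$ (since both $\partial_\delta(PU_{\delta,\xi})$ and $P(\partial_\delta U_{\delta,\xi})$ lie in $H^1_0(\Omega)$ and satisfy the same Poisson equation, by differentiating $-\Delta PU_{\delta,\xi} = U_{\delta,\xi}^3$; similarly in $\xi_j$). Thus
\[
P\psi^0_{\delta,\xi} = \delta\,\partial_\delta (PU_{\delta,\xi}) = \psi^0_{\delta,\xi} - \delta A\,H(\cdot,\xi) - \delta^2 A\,\partial_\delta H(\cdot,\xi) + \delta\,\partial_\delta R_{\delta,\xi},
\]
and since $H(\cdot,\xi)$ does not depend on $\delta$ the middle-$\partial_\delta H$ term vanishes, while $\delta\,\partial_\delta R_{\delta,\xi} = \ob(\delta)$. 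An entirely analogous computation for $P\psi^j_{\delta,\xi} = \delta\,\partial_{\xi_j}(PU_{\delta,\xi})$ produces the term $-\delta^2 A\,\partial_{\xi_j} H(\cdot,\xi)$ with remainder $\delta\,\partial_{\xi_j} R_{\delta,\xi} = \ob(\delta^2)$.

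The main (mild) point to be careful about is justifying that differentiation in the parameters commutes with the projection $P$; this follows from uniqueness of solutions to the Dirichlet problem together with the smooth dependence of $U_{\delta,\xi}$ on $(\delta,\xi)$. Everything else reduces to the explicit formula for the boundary value of $R_{\delta,\xi}$ and one application of the maximum principle per estimate.
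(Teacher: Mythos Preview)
Your argument is correct and is exactly the standard maximum-principle proof of this expansion. The paper itself does not reproduce a proof at all: it simply writes ``See Proposition~1 in \cite{Rey}'', referring to Rey's original result. What you have written is essentially the content of that proposition (adapted to $N=4$), so your approach and the cited reference coincide.
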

\begin{proof}
See Proposition 1 in \cite{Rey} \qedhere
\end{proof}

\begin{lemma}\label{lemma:integral_estimates} Take $K\Subset \Omega\subset \R^4$.
Then for every $0<p< 2$,
\[
\int_\Omega U_{\delta,\xi}^p=\textrm{O}(\delta^p) \qquad \text{ as } \delta\to 0,
\]
while
\[
\int_\Omega U_{\delta,\xi}^2= c_4^2\delta^2 \omega_3 |\ln \delta|+\Ob(\delta^2) \qquad \text{ as } \delta\to 0
\]
and, for $2<p<4$,
\[
\int_\Omega U_{\delta,\xi}^p= \textrm{O}(\delta^{4-p}) \qquad \text{ as } \delta\to 0,
\]
uniformly for $\xi\in K$, where $\omega_3$ denotes the measure of the unit sphere $S^3\subset \R^4$.
\end{lemma}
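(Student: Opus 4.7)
The plan is to reduce all three estimates to the same one-variable integral via the natural rescaling $y=(x-\xi)/\delta$. Since $\xi$ varies in a fixed compact set $K\Subset\Omega$, there exist $0<r_0<R_0$ (depending only on $K$) with $B_{r_0}(\xi)\subset\Omega\subset B_{R_0}(\xi)$ for every $\xi\in K$, so after the change of variables
\[
\int_\Omega U_{\delta,\xi}^p\,dx = c_4^p\,\delta^{4-p}\int_{(\Omega-\xi)/\delta}\frac{dy}{(1+|y|^2)^p},
\]
and the rescaled domain is trapped between $B_{r_0/\delta}(0)$ and $B_{R_0/\delta}(0)$. Passing to spherical coordinates in $\R^4$ (with surface measure $\omega_3$ on $S^3$), everything reduces to understanding
\[
J_p(R):=\omega_3\int_0^R \frac{r^3}{(1+r^2)^p}\,dr
\]
for $R\sim 1/\delta$.

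For the three regimes I would argue as follows. When $0<p<2$, the integrand behaves like $r^{3-2p}$ at infinity with $3-2p>-1$, so $J_p(R)=\Ob(R^{4-2p})$; inserting $R\sim 1/\delta$ gives $\delta^{4-p}\cdot\Ob(\delta^{2p-4})=\Ob(\delta^p)$. When $p=2$ I would use the explicit antiderivative, computed via $u=r^2$:
\[
\int\frac{r^3}{(1+r^2)^2}\,dr=\tfrac12\Bigl(\ln(1+r^2)+\tfrac{1}{1+r^2}\Bigr)+C,
\]
which yields $J_2(R)=\omega_3\ln R+\Ob(1)$, hence the claimed $c_4^2\omega_3\delta^2|\ln\delta|+\Ob(\delta^2)$. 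Finally, for $2<p<4$ the full integral $K_p:=\int_{\R^4}(1+|y|^2)^{-p}\,dy$ is finite, and the tail bound $\int_R^\infty r^{3-2p}\,dr=\Ob(R^{4-2p})$ gives
\[
\int_{(\Omega-\xi)/\delta}\frac{dy}{(1+|y|^2)^p}=K_p+\Ob(\delta^{2p-4}),
\]
so multiplying by $\delta^{4-p}$ produces a leading term of order $\delta^{4-p}$ (which, since $2<p<4$ implies $4-p<p$, dominates the remainder $\Ob(\delta^p)$).

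There is essentially no hard step here: the only point requiring a bit of care is to ensure the estimates are uniform in $\xi\in K$, which is exactly what the two-sided inclusion $B_{r_0/\delta}\subset(\Omega-\xi)/\delta\subset B_{R_0/\delta}$ provides (with $r_0,R_0$ independent of $\xi$). Once that uniformity is fixed, each of the three cases is a textbook one-dimensional asymptotic. I would write the three cases as separate short paragraphs, each one displaying the rescaling, the one-dimensional asymptotic, and the resulting estimate on $\int_\Omega U_{\delta,\xi}^p$.
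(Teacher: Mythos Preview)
Your proposal is correct and follows essentially the same approach as the paper: rescale $y=(x-\xi)/\delta$, reduce to the radial integral $\int_0^{R/\delta} r^3(1+r^2)^{-p}\,dr$, and treat the three regimes $p<2$, $p=2$, $2<p<4$ separately (with the explicit antiderivative for $p=2$). The only cosmetic difference is that the paper works with a single inner ball $B_R(\xi)$ and estimates the remainder over $\Omega\setminus B_R(\xi)$ by $\Ob(\delta^p)$, whereas you sandwich $(\Omega-\xi)/\delta$ between two balls; both devices yield the required uniformity in $\xi\in K$.
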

\begin{proof}
Take $R=\dist(\xi,\partial \Omega)/2$. Then
\[
\begin{split}
\int_\Omega U_{\delta,\xi}^p& =c_4^p \int_{B_{R}(\xi)} \frac{\delta^p}{(\delta^2+|x-\xi|^2)^p}+\Ob(\delta^p)\\
		&=c_4^p \delta^{4-p} \omega_3 \int_0^{R/\delta} \frac{t^3}{(1+t^2)^p}\, dt+\Ob(\delta^p).
\end{split}
\]
We have $t^3/(1+t^2)^p\leq t^{3-2p}$ for every $t\geq 0$. Thus, if $p\neq 3/2$,
\[
\int_\Omega U_{\delta,\xi}^p=\Ob(\delta^{4-p}) + c_4^p \delta^{4-p} \omega_3 \left[ t^{4-2p}\right]_{t=1}^{t=R/\delta}+\Ob(\delta^p)=\Ob(\delta^{4-p})+\Ob(\delta^p).
\]
Thus $\int_\Omega U_{\delta,\xi}^p=\Ob(\delta^p)$ if $0<p<2$, and $\int_\Omega U_{\delta,\xi}^p=\Ob(\delta^{4-p})$ if $2<p<4$.

\medbreak

Likewise, we have that
\[
\begin{split}
\int_\Omega U_{\delta,\xi}^2 &= c_4^2 \delta^2 \omega_3 \int_0^{R/\delta} \frac{t^3}{(1+t^2)^2}\, dt + \Ob(\delta^2)\\
					    &=\frac{c_4^2}{2}\delta^2 \omega_3 \left[\frac{1}{1+t^2}+\ln (1+t^2)\right]^{t=R/\delta}_{t=0}+\Ob(\delta^2)\\
					     &=-c_4^2 \delta^2 \omega_3 \ln \delta+\Ob(\delta^2),
\end{split}
\]
as $\delta\to 0$. \qedhere
\end{proof}

Next we will present some asymptotic estimates related to the competition term of the system under consideration. For that, we will need the following simple pointwise estimates.

\begin{lemma}\label{lemma:auxiliaryestimate}
Let $F:\R\to \R$ be defined by $F(s)=(s^+)^4/4$. Then there exists $c>0$ such that, for every $a,b\in \R$,
\[
|F(a+b)-F(a)-F'(a)b|\leq c (a^2b^2 +b^4),
\]

\[
|F'(a+b)-F'(a)-F''(a)b|\leq c(|a| b^2 +|b|^3),
\]
and
\[
|F''(a+b)-F''(a)|\leq c (|a| |b| + b^2).
\]
Moreover, for every $p>1$ there exists $C>0$ such that
\[
\left| |a+b|^p-|a|^p\right|\leq C \left( |a|^{p-1}|b| + |b|^p \right),
\]
for every $a,b\in \R$.
\begin{proof}
The proof is very elementary, and follows simply from a Taylor's expansion with Lagrange-type remainder.
\end{proof}
\end{lemma}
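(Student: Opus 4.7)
The plan is to use Taylor's theorem with integral remainder, exploiting the fact that $F$ is of class $C^2(\R)$ (even though it involves a positive-part truncation): explicitly, $F'(s) = (s^+)^3$ and $F''(s) = 3(s^+)^2$, the latter being continuous since $(s^+)^2$ is differentiable at $0$. The key pointwise property I will use throughout is a Lipschitz-type estimate for $F''$ with a linearly growing constant, namely
\[
|F''(x)-F''(y)|=3|(x^+)^2-(y^+)^2|=3|x^++y^+|\,|x^+-y^+|\leq 3(|x|+|y|)|x-y|.
\]
Setting $x=a+b$, $y=a$ immediately yields the third inequality with constant $c=9$: $|F''(a+b)-F''(a)|\leq 3(2|a|+|b|)|b|$.

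For the first inequality, I would write the integral form
\[
F(a+b)-F(a)-F'(a)b=\left(\int_0^1(1-t)F''(a+tb)\,dt\right)b^2,
\]
and bound $F''(a+tb)=3((a+tb)^+)^2\leq 3(|a|+|b|)^2\leq 6(a^2+b^2)$, which gives $|F(a+b)-F(a)-F'(a)b|\leq 3(a^2+b^2)b^2$, i.e. the claim with $c=3$. For the second inequality, I would use
\[
F'(a+b)-F'(a)-F''(a)b=\int_0^1\bigl(F''(a+tb)-F''(a)\bigr)b\,dt,
\]
together with the Lipschitz bound above applied pointwise to $F''(a+tb)-F''(a)$, obtaining $|F''(a+tb)-F''(a)|\leq 3(2|a|+|b|)|b|$ uniformly in $t\in[0,1]$, which integrates to $3(2|a|+|b|)b^2\leq c(|a|b^2+|b|^3)$.

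Finally, for the estimate on $|a+b|^p-|a|^p$, I would apply the mean value theorem to $g(s):=|s|^p$, noting that $g\in C^1(\R)$ with $g'(s)=p|s|^{p-1}\operatorname{sgn}(s)$ since $p>1$. This gives
\[
\bigl||a+b|^p-|a|^p\bigr|\leq p\int_0^1|a+tb|^{p-1}|b|\,dt\leq p(|a|+|b|)^{p-1}|b|,
\]
and the elementary subadditivity inequality $(|a|+|b|)^{p-1}\leq 2^{\max(p-2,0)}(|a|^{p-1}+|b|^{p-1})$ (for $1<p\leq 2$ by subadditivity of $t\mapsto t^{p-1}$, for $p\geq 2$ by convexity) closes the proof. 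No step here presents a serious obstacle, since $F$ is genuinely $C^2$ despite the truncation; the only point requiring mild care is to verify that the Lipschitz-type estimate for $F''$ is used with the correct $(|x|+|y|)$ growth, which ensures that the bounds are balanced between $|a|$ and $|b|$ as claimed.
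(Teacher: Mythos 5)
Your proof is correct and takes essentially the same route as the paper, which simply invokes a Taylor expansion with remainder; your use of the integral form rather than the Lagrange form, together with the explicit Lipschitz-type bound $|F''(x)-F''(y)|\leq 3(|x|+|y|)|x-y|$, is an inessential variation that just makes the constants explicit. All the estimates check out, including the final one for $|a+b|^p-|a|^p$ via the mean value theorem and the subadditivity/convexity dichotomy for the exponent $p-1$.
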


\begin{lemma}\label{lemma:upper_estimate_auxiliar}
 Given $p,q>0$ and $\eta>0$ small, we have that
 \[
 \int_\Omega U_{\delta_1,\xi_1}^p U_{\delta_2,\xi_2}^q \leq \Ob(\delta_2^q)\int_\Omega U_{\delta_1,\xi_1}^p + \Ob(\delta_1^p) \int_\Omega U_{\delta_2,\xi_2}^q+\Ob(\delta_1^p\delta_2^q).
 \]
 as $(\delta_1,\delta_2)\to (0,0)$, uniformly for all $\xi_1,\xi_2\in \Omega$ such that $|\xi_1-\xi_2|\geq 2\eta$, $\dist(\xi_1,\partial \Omega),\dist(\xi_2,\partial \Omega)\geq 2\eta$.
\end{lemma}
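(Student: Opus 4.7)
The proof is a direct region decomposition argument exploiting the fact that the two bubbles $U_{\delta_1,\xi_1}$ and $U_{\delta_2,\xi_2}$ are peaked far apart: since $|\xi_1-\xi_2|\geq 2\eta$, the balls $B_\eta(\xi_1)$ and $B_\eta(\xi_2)$ are disjoint. The plan is to split the integration domain as
\[
\Omega=B_\eta(\xi_1)\;\cup\;B_\eta(\xi_2)\;\cup\;\bigl(\Omega\setminus(B_\eta(\xi_1)\cup B_\eta(\xi_2))\bigr),
\]
and bound each piece by one of the three terms on the right-hand side.

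On $B_\eta(\xi_1)$, for any $x$ in this ball the triangle inequality gives $|x-\xi_2|\geq |\xi_1-\xi_2|-|x-\xi_1|\geq \eta$, hence
\[
U_{\delta_2,\xi_2}(x)=c_4\frac{\delta_2}{\delta_2^2+|x-\xi_2|^2}\leq \frac{c_4}{\eta^2}\,\delta_2=\Ob(\delta_2),
\]
uniformly in $x\in B_\eta(\xi_1)$ and in the allowed positions of $\xi_2$. Raising this to the $q$-th power and pulling the constant out yields
\[
\int_{B_\eta(\xi_1)}U_{\delta_1,\xi_1}^p U_{\delta_2,\xi_2}^q \leq \Ob(\delta_2^q)\int_{B_\eta(\xi_1)}U_{\delta_1,\xi_1}^p\leq \Ob(\delta_2^q)\int_{\Omega}U_{\delta_1,\xi_1}^p.
\]
The analogous argument on $B_\eta(\xi_2)$ (swapping the roles of the two bubbles) produces the term $\Ob(\delta_1^p)\int_\Omega U_{\delta_2,\xi_2}^q$.

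Finally, on the complement $\Omega\setminus(B_\eta(\xi_1)\cup B_\eta(\xi_2))$ both points are at distance at least $\eta$ from the respective centers, so simultaneously $U_{\delta_1,\xi_1}(x)=\Ob(\delta_1)$ and $U_{\delta_2,\xi_2}(x)=\Ob(\delta_2)$ uniformly; multiplying and integrating over a set of measure at most $|\Omega|$ gives the contribution $\Ob(\delta_1^p\delta_2^q)$. Summing the three pieces yields the claimed inequality. There is no real obstacle here: the only point requiring care is to check that all the $\Ob$'s are uniform in the admissible $\xi_1,\xi_2$, which is automatic since the lower bounds $|x-\xi_i|\geq\eta$ on each region depend only on $\eta$, not on the specific location of the $\xi_i$'s in $\Omega$.
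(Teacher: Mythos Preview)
Your proof is correct and follows exactly the same approach as the paper: decompose $\Omega$ into $B_\eta(\xi_1)$, $B_\eta(\xi_2)$, and the complement, and use the uniform bound $U_{\delta_i,\xi_i}\leq C\delta_i$ on $\Omega\setminus B_\eta(\xi_i)$. The paper states this in two lines; your version simply fills in the routine details.
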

\begin{proof}
This is a simple consequence of the fact that
\[
 \int_\Omega U_{\delta_1,\xi_1}^p U_{\delta_2,\xi_2}^q = \int_{B_\eta(\xi_1)} U_{\delta_1,\xi_1}^p U_{\delta_2,\xi_2}^q+ \int_{B_\eta(\xi_2)} U_{\delta_1,\xi_1}^p U_{\delta_2,\xi_2}^q +\int_{\Omega \setminus (B_\eta(\xi_1)\cup B_{\eta}(\xi_2))}U_{\delta_1,\xi_1}^p U_{\delta_2,\xi_2}^q
\]
together with the fact that $U_{\delta_i,\xi_i}\leq C \delta_i$ on $\Omega \setminus B_\eta(\xi_i)$, for some $C>0$ independent of $\delta_i$.
\end{proof}

\begin{lemma}\label{lemma:auxiliary_lemmas_appendix}
 Given $j=0,\ldots, 4$ and $\eta>0$ small, we have
\[
\left\|(PU_{\delta_2,\xi_2})^2(P\psi_{\delta_1,\xi_1}^j) \right \|_{4/3}=\Ob(\delta_{1}\delta_2) \quad \text{ and }\quad  \left\|(PU_{\delta_1,\xi_1})(PU_{\delta_2,\xi_2})(P\psi_{\delta_1,\xi_1}^j )\right\|_{4/3}=\Ob(\delta_{1}\delta_2)
\]
as $(\delta_1,\delta_2)\to (0,0)$, uniformly for all $\xi_1,\xi_2\in \Omega$ such that $|\xi_1-\xi_2|\geq 2\eta$, $\dist(\xi_1,\partial \Omega),\dist(\xi_2,\partial \Omega)\geq 2\eta$.
\end{lemma}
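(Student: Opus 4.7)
\textbf{Proof proposal for Lemma \ref{lemma:auxiliary_lemmas_appendix}.}

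The plan is to reduce the estimates to pointwise bounds involving only the standard bubbles $U_{\delta_i,\xi_i}$, and then apply Lemmas \ref{lemma:upper_estimate_auxiliar} and \ref{lemma:integral_estimates}. The starting observation is that $P$ preserves positivity and decreases the $L^\infty$ norm (by the maximum principle applied to \eqref{eq:projection_of_U}), so $0\leq PU_{\delta,\xi}\leq U_{\delta,\xi}$. For the derivatives $\psi^j_{\delta,\xi}$, an elementary computation using $\delta^2+|x-\xi|^2\geq 2\delta|x-\xi|$ (AM--GM) and $|x_j-\xi_j|\leq |x-\xi|$ yields the pointwise bound
\[
|\psi^j_{\delta,\xi}(x)|\leq U_{\delta,\xi}(x)\quad\text{for every }j=0,1,\ldots,4.
\]
Combined with \eqref{eq:expansion2}--\eqref{eq:expansion3} from Lemma \ref{lemma:expansion_of_PU}, and using the fact that $H(\cdot,\xi)$ and $\partial_{\xi_j}H(\cdot,\xi)$ are uniformly bounded on $\{\xi:\dist(\xi,\partial\Omega)\geq\eta\}$, we obtain
\[
|P\psi^j_{\delta,\xi}(x)|\leq U_{\delta,\xi}(x)+C\delta,\qquad j=0,1,\ldots,4,
\]
for some constant $C=C(\eta)>0$. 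These three pointwise bounds are the only ingredients we shall need beyond the lemmas in the appendix.

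For the first estimate, I raise the integrand to the power $4/3$ and use the bounds above:
\[
\int_\Omega (PU_{\delta_2,\xi_2})^{8/3}|P\psi^j_{\delta_1,\xi_1}|^{4/3}\leq C\int_\Omega U_{\delta_2,\xi_2}^{8/3}U_{\delta_1,\xi_1}^{4/3}+C\delta_1^{4/3}\int_\Omega U_{\delta_2,\xi_2}^{8/3}.
\]
Applying Lemma \ref{lemma:upper_estimate_auxiliar} with $p=4/3$, $q=8/3$, and Lemma \ref{lemma:integral_estimates} (which gives $\int U^{4/3}_{\delta_i,\xi_i}=\Ob(\delta_i^{4/3})$ and $\int U^{8/3}_{\delta_i,\xi_i}=\Ob(\delta_i^{4/3})$, since $4/3<2<8/3<4$), both summands are $\Ob(\delta_1^{4/3}\delta_2^{4/3})$. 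Taking the $3/4$-th power yields $\|(PU_{\delta_2,\xi_2})^2(P\psi^j_{\delta_1,\xi_1})\|_{4/3}=\Ob(\delta_1\delta_2)$, as desired.

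For the second estimate, the same strategy gives
\[
\int_\Omega (PU_{\delta_1,\xi_1})^{4/3}(PU_{\delta_2,\xi_2})^{4/3}|P\psi^j_{\delta_1,\xi_1}|^{4/3}\leq C\int_\Omega U_{\delta_1,\xi_1}^{8/3}U_{\delta_2,\xi_2}^{4/3}+C\delta_1^{4/3}\int_\Omega U_{\delta_1,\xi_1}^{4/3}U_{\delta_2,\xi_2}^{4/3}.
\]
Applying Lemma \ref{lemma:upper_estimate_auxiliar} and Lemma \ref{lemma:integral_estimates} to each of the two integrals, one checks that the first is $\Ob(\delta_1^{4/3}\delta_2^{4/3})$ and the second is $\Ob(\delta_1^{4/3}\delta_2^{4/3})$ as well (so the prefactor $\delta_1^{4/3}$ makes that contribution strictly smaller). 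Taking the $3/4$-th power again yields $\Ob(\delta_1\delta_2)$.

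There is no real obstacle in this argument; the only point requiring care is that the bound $|P\psi^j|\leq U+C\delta$ is used in the stronger form where the constant $C$ is uniform in $\xi$ on compact subsets of $\Omega$ (guaranteed by the separation and boundary-distance hypothesis $\dist(\xi_i,\partial\Omega)\geq 2\eta$). Once this is in place, the rest is a straightforward bookkeeping exercise combining the two appendix lemmas.
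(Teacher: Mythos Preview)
Your proof is correct and takes a cleaner route than the paper's. The paper expands each projected quantity via Lemma~\ref{lemma:expansion_of_PU} and then uses the algebraic inequality $\big||a+b|^p-|a|^p\big|\leq C(|a|^{p-1}|b|+|b|^p)$ from Lemma~\ref{lemma:auxiliaryestimate} to control the many cross terms $(PU_2)^{8/3}-U_2^{8/3}$, $|P\psi_1^j|^{4/3}-|\psi_1^j|^{4/3}$, etc., before reducing to Lemmas~\ref{lemma:upper_estimate_auxiliar} and \ref{lemma:integral_estimates}. You instead invoke the maximum-principle bound $0\leq PU_{\delta,\xi}\leq U_{\delta,\xi}$ (which the paper never uses here) together with the pointwise estimate $|\psi^j_{\delta,\xi}|\leq U_{\delta,\xi}$, and only appeal to the expansion of $P\psi^j$ to produce the additive $C\delta$ correction. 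This bypasses the entire bookkeeping of difference terms and lands directly on the same two appendix lemmas. The gain is brevity and transparency; the paper's approach is more mechanical but would generalize more readily to situations where a one-sided comparison like $PU\leq U$ is unavailable.
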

\begin{proof}
To simplify notations, define $U_i:=U_{\delta_i,\xi_i}$ and $\psi_i^j:=\psi^j_{\delta_i,\xi_i}$, for $i=1,2$, $j=0,\ldots, 4$. We have ($j=0$)
\[
\left \|(PU_2)^2(P\psi_1^0) \right\|_{4/3}^{4/3}=\int_\Omega |U_{2}|^{8/3}|\psi^0_1|^{4/3}+R_{\deltab,\xib},
\]
with
\begin{multline*}
R_{\deltab,\xib}=\int_\Omega \left[ (PU_2)^{8/3}-U_2^{8/3}\right] \left[ |P\psi_1^0|^{4/3}-|\psi_1^0|^{4/3} \right]+\int_\Omega U_2^{8/3}\left[ |P\psi_1^0|^{4/3}-|\psi_1^0|^{4/3} \right]\\
+\int_\Omega |\psi_1^0|^{4/3}\left[ (PU_2)^{8/3}-U_2^{8/3}\right] , 
\end{multline*}
so that, by taking in consideration Lemmas \ref{lemma:expansion_of_PU}, \ref{lemma:auxiliaryestimate} and \ref{lemma:upper_estimate_auxiliar}, and the fact that $|\psi_1^0|\leq C U_1$ for some $C>0$, we have
\begin{multline*}
|R_{\deltab,\xib}| \leq \int_\Omega \left| U_2^{5/3}\Ob(\delta_2) +\Ob(\delta_2^{8/3}) \right| \left| |\psi_1^0|^{1/3}\Ob(\delta_1)+\Ob(\delta_1^{4/3})\right|+\int_\Omega U_2^{8/3} \left| |\psi_1^0|^{1/3}\Ob(\delta_1) + \Ob(\delta_1^{4/3})\right| \\
+ \int_\Omega |\psi_1^0|^{4/3}\left| U_2^{5/3}\Ob(\delta_2)+\Ob(\delta_2^{8/3})\right| =\Ob(\delta_1^{4/3}\delta_2^{4/3})
\end{multline*}
Since also 
\[
\begin{split}
\int_\Omega (U_2)^{8/3}|\psi_1^0|^{4/3} &\leq  C \int_\Omega (U_2)^{8/3}(U_1)^{4/3}\leq \Ob(\delta_1^{4/3})\int_\Omega (U_2)^{8/3}+\Ob(\delta_2^{8/3})\int_\Omega (U_1)^{4/3}+\Ob(\delta_1^{4/3}\delta_2^{4/3})\\
					&=\Ob(\delta_1^{4/3}\delta_2^{4/3}),
\end{split}
\]
we have
\[
\left\| (PU_2)^2(P\psi_1^0)\right\|_{4/3}^{4/3}=\Ob(\delta_1^{4/3}\delta_2^{4/3}).
\]

For $j\geq 1$, we can reason in an analogous way: by using the fact that $|\psi_1^j|\leq C U_1^2$ for some $C>0$, we obtain
\[
\|(PU_2)^2(P\psi_1^j)\|_{4/3}^{4/3}=\int_\Omega (U_{2})^{8/3}|\psi^j_1|^{4/3}+\Ob(\delta_1^{4/3}\delta_2^{4/3})=\Ob(\delta_1^{4/3}\delta_2^{4/3}).
\]

\medbreak

As for the second conclusion of the lemma, reasoning in the same line, we write ($j=0$)
\[
\|(PU_1)(PU_2)(P\psi_1^0)\|_{4/3}^{4/3}=\int_\Omega (U_{1})^{4/3} (U_2)^{4/3}|\psi^0_1|^{4/3}+\tilde R_{\deltab,\xib},
\]
with
\begin{multline*}
\tilde R_{\deltab,\xib}= \int_\Omega \left[ (PU_1)^{4/3}-U_1^{4/3}\right] \left[ (PU_2)^{4/3}-U_2^{4/3}\right] \left[ |P\psi_1^0|^{4/3}-|\psi_1^0|^{4/3}\right] \\
	+ \int_\Omega U_1^{4/3} U_2^{4/3} \left[ |P\psi_1^0|^{4/3}-|\psi_1^0|^{4/3}\right] + \int_\Omega U_1^{4/3} \left[ (PU_2)^{4/3}-U_2^{4/3}\right] |\psi_1^0|^{4/3}\\
	+\int_\Omega \left [ (PU_1)^{4/3}-U_1^{4/3}\right] U_2^{4/3} |\psi_1^0|^{4/3}+\int_\Omega \left[ (PU_1)^{4/3}-U_1^{4/3}\right] \left[ (PU_2)^{4/3}-U_2^{4/3}\right] |\psi_1^0|^{4/3}\\
	+  \int_\Omega \left[ (PU_1)^{4/3}-U_1^{4/3}\right] U_2^{4/3} \left[ |P\psi_1^0|^{4/3}-|\psi_1^0|^{4/3}\right] \\
	+ \int_\Omega U_1^{4/3} \left[ (PU_2)^{4/3}-U_2^{4/3}\right] \left[ |P\psi_1^0|^{4/3}-|\psi_1^0|^{4/3}\right] 
\end{multline*}
By using once again Lemmas \ref{lemma:expansion_of_PU}, \ref{lemma:auxiliaryestimate} and \ref{lemma:upper_estimate_auxiliar}, we can prove that $\tilde R_{\deltab,\xib}=\Ob(\delta_1^{4/3}\delta_2^{4/3})$. Since moreover (by Lemma \ref{lemma:upper_estimate_auxiliar})
\[
\int_\Omega U_{1}^{4/3} U_2^{4/3}|\psi^0_1|^{4/3}\leq C\int_\Omega U_1^{8/3}U_2^{4/3}=\Ob(\delta_1^{4/3}\delta_2^{4/3}), 
\]
we conclude, as wanted, that
\[
\|(PU_1)(PU_2)(P\psi_1^0)\|_{4/3}^{4/3}=\Ob(\delta_1^{4/3}\delta_2^{4/3}).
\]
The fact that
\[
\|(PU_1)(PU_2)(P\psi_1^j)\|_{4/3}^{4/3}=\Ob(\delta_1^{4/3}\delta_2^{4/3}) \qquad \text{ for } j=1,\ldots, 4,
\]
follows in an analogous way, using this time the estimate: $|\psi_1^j|\leq C U_1^{2}$, for some $C>0$.
\end{proof}

\section*{Acknowledgments}

Angela Pistoia was supported by GNAMPA and Sapienza Fondi di Ricerca.

Hugo Tavares was partially supported by Funda\c c\~ao para a Ci\^encia e Tecnologia
through the program Investigador FCT and the project PEst-OE/EEI/-LA0009/2013, as well as by the ERC Advanced Grant 2013 n.339958 ``Complex Patterns for Strongly Interacting Dynamical Systems - COMPAT''.

\end{document}